\documentclass[11pt]{article}

\usepackage[english]{babel}
\usepackage[utf8]{inputenc}



\usepackage[left=1.1 in, right= 1.1 in, top = 1 in, bottom = 1 in]{geometry}

\makeatletter
\g@addto@macro\normalsize{%
  \setlength\abovedisplayskip{7pt}
  \setlength\belowdisplayskip{7pt}
  \setlength\abovedisplayshortskip{7pt}
  \setlength\belowdisplayshortskip{7pt}
}
\makeatother

\usepackage{tocloft}

\setlength{\cftbeforesecskip}{0pt}
\setlength\cftparskip{0pt}
\setlength\cftaftertoctitleskip{6pt}

\interfootnotelinepenalty=10000

\usepackage{amsfonts}
\usepackage{mathrsfs}
\usepackage{bbm}
\usepackage{latexsym}
\usepackage{math dots}
\usepackage{amssymb}
\usepackage{mathtools}
\usepackage{relsize}
\usepackage{lipsum}

\usepackage{todonotes}
\usepackage{enumitem}
\setlist{nolistsep} 	
\usepackage{amsthm}

\usepackage{xcolor}
\definecolor{Color1}{rgb}{0.0, 0.42, 0.47}
\definecolor{Color2}{rgb}{0.78, 0.11, 0.0}

\usepackage{titlesec}
\titleformat{\section}
  {\large\center\bfseries}
  {\thesection.}{.7em}{}
\titlespacing*{\section}{0pt}{3.5ex plus 0ex minus 0ex}{1.5ex plus 0ex}
\titleformat{\subsection}
  {\bfseries}
  {\thesubsection.}{.7em}{}
\titlespacing*{\subsection}{0pt}{3.5ex plus 0ex minus 0ex}{1.5ex plus 0ex}
\titleformat{\subsubsection}
  {\center\bfseries}
  {\thesubsubsection.}{.7em}{}
\titlespacing*{\subsubsection}{0pt}{3.5ex plus 0ex minus 0ex}{1.5ex plus 0ex}

\addto\captionsenglish{}

\makeatletter
\renewenvironment{abstract}{
\begin{center}
{\bfseries \large\abstractname\vspace{\z@}}
\end{center}
\quotation
}
\makeatother

\makeatletter
\@namedef{subjclassname@2020}{
  \textup{2020} Mathematics Subject Classification}
\makeatother

\usepackage[linktocpage=true]{hyperref}
\usepackage[capitalize]{cleveref}
\hypersetup{citecolor = Color2,colorlinks,
			linkcolor = black,
			urlcolor = Color2}

\newtheoremstyle{plain}{3mm}{3mm}{\slshape}{}{\bfseries}{.}{.5em}{}
\newtheoremstyle{definition}{2mm}{2mm}{}{}{\bfseries}{.}{.5em}{}
\theoremstyle{plain}
	
\newtheorem{Theorem}{Theorem}

\newtheorem{Lemma}[Theorem]{Lemma}
\newtheorem{lemma}[Theorem]{Lemma}
\newtheorem{Proposition}[Theorem]{Proposition}

\newtheorem{Question}[Theorem]{Question}
\newtheorem{Problem}[Theorem]{Problem}

\theoremstyle{definition}
\newtheorem{Definition}[Theorem]{Definition}
\newtheorem{Remark}[Theorem]{Remark}

\newtheorem*{Theorem*}{Theorem}

\theoremstyle{plain} 
\newcounter{MainTheoremCounter}

\theoremstyle{plain}
\newtheorem*{namedthm}{\namedthmname}
\newcounter{namedthm}
\makeatletter
	\newenvironment{named}[2]
	{\def\namedthmname{#1}
	\refstepcounter{namedthm}
	\namedthm[#2]\def\@currentlabel{#1}}
	{\endnamedthm}
\makeatother

\usepackage{chngcntr}
\counterwithin{Theorem}{section}

\numberwithin{equation}{section}

\allowdisplaybreaks



\newcommand{\Mobius}{M\"{o}bius}

\newcommand{\Lemanczyk}{Lema\'{n}czyk}


\newcommand{\N}{\mathbb{N}}
\newcommand{\Z}{\mathbb{Z}}
\newcommand{\R}{\mathbb{R}}
\newcommand{\C}{\mathbb{C}}

\newcommand{\T}{\mathbb{T}}


\renewcommand{\epsilon}{\varepsilon}
\renewcommand{\leq}{\leqslant}
\renewcommand{\geq}{\geqslant}
\renewcommand{\setminus}{\backslash}

\renewcommand{\P}{\mathbb{P}}



\renewcommand{\d}{~\mathrm{d}}


\usepackage[normalem]{ulem}



\author{By~~{\scshape Andreas Koutsogiannis}, {\scshape Anh N.\ Le},~~{\scshape Joel Moreira},\\ ~~{\scshape Ronnie Pavlov} and~~{\scshape Florian~K.~Richter}}
\date{}

\title{\bfseries Interpolation sets for dynamical systems}

\begin{document}

\maketitle
\begin{abstract} 
\noindent Originating in harmonic analysis, interpolation sets were first studied in dynamics by Glasner and Weiss in the 1980s.
A set $S\subset\N$ 
is an interpolation set for a class of topological dynamical systems $\mathcal{C}$ if any bounded sequence on $S$ can be 
extended to a sequence that arises from 
a system in $\mathcal{C}$.
In this paper, we provide 
combinatorial characterizations of interpolation sets for:
\begin{itemize}
\item (totally) minimal systems;
\item topologically (weak) mixing systems;
\item strictly ergodic systems; and
\item zero entropy systems.
\end{itemize}
\noindent 
Additionally, we prove some results on a slightly different notion, called weak interpolation sets, for several classes of systems.
We also answer a question of Host, Kra, and Maass concerning the connection between sets of pointwise recurrence for distal systems and $IP$-sets.

\end{abstract}

\renewcommand{\thefootnote}{\fnsymbol{footnote}} 
\footnotetext{\emph{2020 Mathematics Subject Classification.}  Primary: 37B05; Secondary: 37B10.}     
\renewcommand{\thefootnote}{\arabic{footnote}}

\tableofcontents

\thispagestyle{empty}

\section{Introduction}

Several studies have explored the notion of interpolation/interpolating sets for classes of sequences arising from dynamical systems \cite{Glasner-Tsankov-Weiss-Zucker-bernoulli-disjointness, Glasner-Weiss-Interpolation, Glasner_Weiss95,  Le_interpolation_nil, Le_sublac, Weiss-SingleOrbitDynamics}. 
The first such definition is due to Glasner \cite{Glasner_divisibility}: letting $\mathcal{A}$ be a subset of $\ell^{\infty}(\N)$ -- the space of all bounded complex-valued functions on the set of positive integers $\N=\{1,2,3,\ldots\}$ --
a set $S \subset \N$ is called an 
\emph{interpolation set for $\mathcal{A}$} if every bounded function $f\colon S\to\C$ can be extended to a function in $\mathcal{A}$. The collection of all interpolation sets for $\mathcal{A}$ is denoted by $\mathcal{I}_{\mathcal{A}}$. 


A \emph{topological dynamical system} is a pair $(X, T)$ where $X$ is compact metric space and $T:X \to X$ is a continuous map. The system $(X, T)$ is \emph{minimal} if for every $x \in X$, the (forward) orbit $o_T(x):=\{T^n x: n \in \N_0\}$ is dense in $X$, where $\N_0:= \N \cup \{0\}$. 
Let $\mathcal{M} \subset\ell^\infty(\N)$ denote the class of bounded sequences $x$ for which the closure of the orbit of $x$ under the left shift map $\sigma$ forms a minimal dynamical system and let $\mathcal{U}$ be the smallest norm-closed, translation-invariant subalgebra of $\ell^{\infty}(\N)$ containing $\mathcal{M}$. 
An old question of Furstenberg \cite{Furstenberg-disjointness-in-ergodic-theory} asked whether $\mathcal{U} = \ell^{\infty}(\N)$.
Glasner and Weiss \cite{Glasner-Weiss-Interpolation} provided a negative answer to this question (for $\mathbb{Z}$-subshifts rather than $\mathbb{N}$-subshifts) through an investigation into the interpolation sets for $\mathcal{U}$. If $\mathcal{U} = \ell^{\infty}(\N)$, every subset of $\N$ would be an interpolation set for $\mathcal{U}$. However, Glasner and Weiss \cite{Glasner-Weiss-Interpolation} showed that an interpolation set for $\mathcal{U}$ cannot be piecewise syndetic (see \cref{sec_background} for definition).
Recently, Glasner, Tsankov, Weiss, and Zucker \cite{Glasner-Tsankov-Weiss-Zucker-bernoulli-disjointness} generalized this result from $\Z$  to arbitrary infinite discrete groups.

The work of Glasner and Weiss mentioned above was motivated partly by a classical notion in harmonic analysis called Sidon sets.
Letting $\mathcal{F}\subset \ell^\infty(\N)$ be all Fourier transforms of measures on the unit circle, a set $S \subset \N$ is \emph{Sidon} 
if it is an interpolation set for $\mathcal{F}$.
Sidon \cite{Sidon-fourier-1, Sidon-fourier-2} himself showed that every lacunary set is Sidon.\footnote{A set $\{a_n: n \in \N\} \subset \N$ with $a_1 < a_2 <\ldots$ is \emph{lacunary} if $\inf_{n \in \N} a_{n+1}/a_n > 1$.} A later result of Drury \cite{Drury_Sidonsets} showed that the union of two Sidon sets is Sidon.

An important special case of Sidon sets arises when one considers 
$\mathcal{F}_0$ -- the set of Fourier transforms of discrete measures on the unit circle.  
The interpolation sets for $\mathcal{F}_0$ are called \emph{$I_0$-sets}.
$I_0$-sets were studied extensively starting in the 1960s \cite{Hartman_1961, Hartman_Ryll-Nardzewski_1964, Kunen_Rudin_1999, Ramsey77, Ryll-Nardzewski_1964}. Strzelecki \cite{Strzelecki_1963} showed that lacunary sets are $I_0$-sets, extending Sidon's results \cite{Sidon-fourier-1, Sidon-fourier-2}. 
Ryll-Nardzewski proved that the union of an $I_0$-set and a finite set is an $I_0$-set \cite{Ryll-Nardzewski_1964}. 
Using Hartman-Ryll-Nardzewski characterization \cite{Hartman_Ryll-Nardzewski_1964}, some unions of lacunary sets such as $\{2^n: n \in \N\} \cup \{2^n + 1: n \in \N\}$ are $I_0$-sets. However, in contrast to Sidon sets, the union of two $I_0$-sets is not necessarily $I_0$, for example, $\{2^n: n \in \N\} \cup \{2^n + n: n \in \N\}$ \cite{Ryll-Nardzewski_1964}. It is not true either that every $I_0$-set is a finite union of lacunary sets as seen in the following example by Grow \cite{Grow_1987} and M\'ela \cite{Mela_1969}: $\{3^{n^2} + 3^j: n \geq 1, (n-1)^2 \leq j \leq n^2\}$.

The building blocks of previously known examples of $I_0$-sets are lacunary sets, a feature that makes them extremely sparse. Therefore, it is natural to ask whether there exists an $I_0$-set that has subexponential\footnote{$\{a_n: n \in \N\} \subset \N$ with $a_1 < a_2 < \ldots$ is \emph{subexponential} if $\lim_{n \to \infty} (\log a_n)/n = 0$.} or even polynomial growth. Using a dynamical method, Le \cite{Le_interpolation_nil} gave a negative answer to this question. This result was extended to interpolation sets for nilsequences in \cite{Briet_Green_2021, Le_sublac}.
Another connection between interpolation sets and dynamics was discovered by Griesmer \cite{Griesmer_equivalentKatznelson}, who established
a link between $I_0$-sets and Katznelson's question on the equivalence of Bohr recurrence and topological recurrence.

Interpolation sets also connect to questions in number theory. 
Sarnak's \Mobius{} disjointness conjecture \cite{Sarnak12}, for example, lies at the intersection of multiplicative number theory and dynamics and has received a lot of attention in the last decade (see \cite{Ferenczi-Kulaga-Przymus-Lemanczyk-sarnak-survey, Kulaga-Przymus-Lemanczyk-survey} for surveys on recent developments).
Pavlov showed in \cite{Pavlov-polynomialSarnak} that every set of Banach density $0$ is a weak interpolation set (\cref{def_interpolation_weak}) for the class of all finite-valued sequences whose shift closure forms a zero-entropy minimal dynamical system. This implies that Sarnak's conjecture fails if one replaces the sequence of integers with any sequence of zero Banach density. In particular, this disproved a polynomial analogue of Sarnak's conjecture posed in \cite{Eisner-polynomialSarnak}, which has previously been addressed independently by Kanigowski, Lema\'nczyk, and Radziwi\l\l \ \cite{Lemanczyk-Kanigowski-Radziwill-primenumbertheoremskewproduct} and Lian, Shi \cite{Lian-Shi-counterexample}.

Given a rich but scattered literature on interpolation sets in dynamics, the primary goal of this work is a systematic study of interpolation sets for dynamical systems.
Specifically, we characterize the interpolation sets for sequences arising from following classes of topological dynamical systems: systems with various mixing-type properties, minimal and totally minimal systems, uniquely and strictly ergodic systems, and systems with restricted entropy. 
We begin our exploration with precise definitions.

\begin{Definition}[Strong interpolation]\label{def_interpolation_regular}
    Let ${\mathcal C}$ be a class of 
    topological dynamical systems. A set $S\subset\N$ is an \emph{interpolation set for ${\mathcal C}$} if for every bounded function $f:S\to\C$ there exists a system $(X,T)\in{\mathcal C}$, a transitive\footnote{A point $x \in X$ is \emph{transitive} if its orbit, $o_T(x) = \{T^n x: n \in \N_0\}$, is dense in $X$.} point $x\in X$ and $F\in C(X)$ such that $F(T^nx)=f(n)$ for every $n\in S$.
\end{Definition}

In other words, $S$ is an interpolation set for $\mathcal{C}$ if and only if $S\in\mathcal{I}_{\mathcal{F}}$ for 
$\mathcal{F}=\{n\mapsto F(T^nx): (X,T)\in\mathcal{C},~F\in C(X),~x\in X~\text{transitive}\}$. To differentiate with a weaker notion of interpolation sets we are about to introduce, occasionally we refer to interpolation sets as \emph{strong interpolation sets}. 





In some situations, it will be useful to have a version of interpolation sets which works for functions from the set $S$ to a finite set, rather than to $\mathbb{C}$. 
The size of the finite set in some sense quantifies the ``amount of independence'' to be exhibited. 
A similar idea was already considered by Glasner and Weiss \cite[Section 3]{Glasner_Weiss95} (see also \cite[Theorem 8.1]{Weiss-SingleOrbitDynamics}) where they 
proved that if $X$ is a 
$\{0, 1\}$-subshift of positive entropy, then there exists $S \subset \N$ of positive density such that $X|_S = \{0, 1\}^\N|_S$, where we write $x|_S$ to denote the function obtained by restricting the domain of $x\in \{0,1\}^\N$ to $S\subset\N$, and denote by $Y|_S$ the set $\{x|_S:x\in Y\}$ whenever $Y\subset\{0,1\}^\N$. 
In that spirit, we make the following definition.

\begin{Definition}[Weak interpolation]\label{def_interpolation_weak}
    Let ${\mathcal C}$ be a class of 
    topological dynamical systems and let $k\in\N$. 
    A set $S\subset\N$ is a \emph{weak interpolation set of order $k$ for ${\mathcal C}$} if for every bounded $f:S\to\{0,\dots,k-1\}$ there exists a system $(X,T)\in{\mathcal C}$, a transitive point $x\in X$ and $F\in C(X)$ such that $F(T^nx)=f(n)$ for every $n\in S$. We say that $S$ is a \emph{weak interpolation set of all orders} for $\mathcal{C}$ if it is a weak interpolation set of order $k$ for $\mathcal{C}$ for all $k \in \mathbb{N}$. 
\end{Definition}


There are some obvious relations between these notions of interpolation sets. 
It is immediate from the definition that whenever $k' \leq k$, every weak interpolation set of order $k$ is a weak interpolation set of order $k'$. 
Similarly, a strong interpolation set is a weak interpolation set of all orders.



For many classes of systems, the differences between weak interpolation sets of different orders are genuine. 
For example, since there exists a minimal and uniquely ergodic $\{0,1\}$-subshift of positive entropy \cite{Hahn-Katznelson1967}, Weiss's result \cite[Theorem 8.1]{Weiss-SingleOrbitDynamics} implies that there is a positive density set $S$ which is weak interpolation of order $2$ for the class of uniquely ergodic systems.  
On the other hand, in \cref{prop:uniquely_ergodic_one} below, we prove that every set of positive Banach density (such as $S$) cannot be a weak interpolation set of all orders for uniquely ergodic systems.


In general, it is also false that weak interpolation of all orders implies strong interpolation. Our \cref{prop:weak-finite-entropy} shows that for the class of finite entropy systems, all sets (including $\mathbb{N}$ itself) are weak interpolation of all orders, while \cref{prop:interpolation_finite_entropy} shows that only sets of zero Banach density are strong interpolation. 

That being said, for certain classes of systems, the concepts ``weak interpolation of order $k$'' for a specific value of $k$, ``weak interpolation of all orders,'' and ``strong interpolation'' are all equivalent. 
These classes include compact abelian group rotations \cite[Theorem 1]{Hartman_Ryll-Nardzewski_1964}, nilsystems \cite[Theorem 2.2]{Le_sublac}, and in general any class $\mathcal{C}$ that is closed under taking subsystems and countable Cartesian products (see \cref{sec:equivalence}).
Additionally, \cref{thm:interpolation_minimal_systems_new} below implies that the class of minimal systems also has this property (as a byproduct of a full classification of interpolation sets, of any kind, for the class of minimal systems).

The notions of interpolation sets we discussed above also connect to the notions of ``null'' and ``tame'' systems, 
which are important classes with several useful equivalent definitions. 
\emph{Null systems} are those with zero topological sequential entropy for every subsequence, and \emph{tame systems} are those whose Ellis enveloping semigroup has cardinality at most $2^{\aleph_0}$. 
For minimal systems, null is strictly stronger than tame, which is strictly stronger than uniquely ergodic with (measurable) discrete spectrum (see \cite{HuangTame}).
The connection to interpolation sets arises from alternative definitions of these notions in terms of so-called \emph{independence sets} (see \cite{KerrLi}). 
For instance, in \cite{Kerr_Li-indepedence-topological} it is shown that a system is non-tame if and only if there exist disjoint compact sets $K_0, K_1$ and an infinite set $S \subset \mathbb{N}$ so that for every $f: S \rightarrow \{0,1\}$, there exists $x \in X$ with $T^s x \in K_{f(s)}$ for all $s \in S$. 
A system is non-null if and only if there exist disjoint compact sets $K_0, K_1$ and arbitrarily large finite sets $S$ with the above properties. 
These conditions are not equivalent to ours; for instance, for any non-tame system $(X,T)$, there is an infinite weak interpolation set $S$ of order $2$ for the singleton class $\mathcal{C} = \{(X, T)\}$
(by using Urysohn's Lemma with the sets $K_0, K_1$), but it's not clear whether the converse is true (since in our definition, every function from $S$ to $\{0,1\}$ could correspond to a different $F \in C(X)$).

\subsection{Totally transitive, weak mixing, and strong mixing}


Three central notions of mixing in topological dynamical systems are (strong) mixing, weak mixing and total transitivity (see \cref{sec_background} for the definitions). 
The following implications are well known: mixing $\Rightarrow$ weak mixing $\Rightarrow$ totally transitive. 
Therefore, by definition, 
\begin{eqnarray*}
        &\text{interpolation for mixing systems}&
        \\
        &\Downarrow&
        \\
        &\text{interpolation for weak mixing systems}&
        \\
        &\Downarrow&
        \\
        &\text{interpolation for totally transitive systems}.& 
\end{eqnarray*}
Our first result asserts that the converse directions of both implications above also hold, and provides an easy-to-check combinatorial description of interpolation sets for these classes. 

\begin{Theorem}\label{thm:total_transitive_weak_mixing_mixing}
Let $S \subset \N$. The following are equivalent:
\begin{enumerate}
    \item 
    $S$ is a weak interpolation set of all orders for totally transitive systems.
    
    \item 
    $S$ is an interpolation set for totally transitive systems.
    
    \item 
    $S$ is an interpolation set for weak mixing systems.
    
    \item 
    $S$ is an interpolation set for mixing systems.

    \item 
    $S$ is not syndetic, i.e. $S$ has arbitrarily large gaps (see \cref{sec_background} for definition). 

\end{enumerate}
\end{Theorem}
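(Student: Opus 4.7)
The trivial directions $(4) \Rightarrow (3) \Rightarrow (2) \Rightarrow (1)$ follow from the inclusions mixing $\subset$ weak mixing $\subset$ totally transitive and from the tautological observation that strong interpolation yields weak interpolation of every order. The remaining directions are $(5) \Rightarrow (4)$ and $(1) \Rightarrow (5)$, which together close the equivalence.

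For $(5) \Rightarrow (4)$ the approach is to exploit the arbitrarily long gaps of $\N \setminus S$ to build a mixing realization. Given a bounded $f \colon S \to \C$ with $\|f\|_\infty \leq M$, set $\overline{D} := \{z \in \C : |z| \leq M\}$ and fix a countable family $(w^{(j)})_{j \geq 1}$ of finite words in $\bigsqcup_{m \geq 1} \overline{D}^m$ which is dense under the product topology. Select pairwise disjoint intervals $[n_j, n_j + L_j] \subset \N \setminus S$ with $L_j \to \infty$, and define $\tilde f \colon \N_0 \to \overline{D}$ by $\tilde f|_S := f$, by embedding $w^{(j)}$ inside the $j$-th long gap whenever $L_j$ accommodates it, and by setting all remaining coordinates to $0$. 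Every finite word over $\overline{D}$ then appears arbitrarily closely as a subword of $\tilde f$, so the orbit closure of $\tilde f$ under the shift is the entire full shift $\overline{D}^{\N_0}$, which is mixing; moreover $\tilde f$ is a transitive point. The evaluation functional $F(y) := y(0)$ delivers $F(\sigma^n \tilde f) = \tilde f(n) = f(n)$ for every $n \in S$.

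For $(1) \Rightarrow (5)$ I would argue by contrapositive: assume $S$ is syndetic with gap bound $L$ and construct $k \in \N$ and $f \colon S \to \{0, \ldots, k-1\}$ with no totally transitive interpolation. The central tool is a Baire category lemma: for any totally transitive $(X, T)$ and any $T$-transitive $x$, the orbit $\{T^{mj} x : j \geq 0\}$ is dense in $X$ for every $m \geq 1$. Indeed, $X = \bigcup_{r=0}^{m-1} \overline{\{T^{mj+r} x : j \geq 0\}}$ is a finite union of closed $T^m$-invariant sets, each of which (if proper) is nowhere dense by transitivity of $T^m$; Baire rules out a finite cover by nowhere dense closed sets, forcing one summand to equal $X$, and applying powers of $T$ propagates this to all $r$. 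The strategy is then to select $f$ so that any interpolation $F(T^s x) = f(s)$ forces $F \circ T^m = F$ on a dense orbit subset (via the lemma), hence on $X$ by continuity; transitivity of $T^m$ then upgrades this to $F$ constant, contradicting non-constancy of $f$. The clean case is when $S$ essentially lies in a single residue class modulo $L$, so $S = \{r + L j : j \geq j_0\}$; here $f(r + L j) := j \bmod 2$ forces $F \circ T^{2L} = F$ on the set $T^r \{T^{Lj} x : j \geq 0\}$, which is dense by the lemma.

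The main obstacle is the general case, where $S$ spreads over multiple residue classes modulo $L$. The naive choice $f(s) := \mathbf{1}_{s \equiv r_0 \pmod{L}}$ forces $F(T^{s+L} x) = F(T^s x)$ only on $\{T^s x : s \in S \cap (S-L)\}$, which need not be dense; the syndetic return-time set to a proper arc of an irrational circle rotation already shows that even $\{T^s x : s \in S\}$ itself can fail to be dense. My plan is to exploit weak interpolation of all orders by working with a large alphabet $k > L$ and encoding into $f$ auxiliary information derived from the gap pattern of $S$, so that the orbit subset on which the periodicity constraint applies contains (after pigeonholing into finitely many residues) a full arithmetic progression of some step $m$, restoring density via the Baire lemma. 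This combinatorial packaging of the gap structure of $S$ into $f$ is the delicate step.
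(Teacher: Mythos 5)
Your outline of the easy implications and your construction for $(5) \Rightarrow (4)$ match the paper's proof essentially verbatim: fill the long gaps of $S$ with longer and longer initial segments of a transitive point of the full shift so that the extended sequence is itself transitive, and take $F$ to be evaluation at coordinate $0$. Your auxiliary lemma that transitive points of totally transitive systems are automatically totally transitive is also the same statement as the paper's Lemma 3.3 (the paper proves it via a surjectivity dichotomy; your Baire argument is sound in the surjective case but silently assumes $T^rX = X$ when ``applying powers of $T$'' and when deducing that a proper closed forward $T^m$-invariant set is nowhere dense, so the non-surjective case needs the extra care the paper puts into Proposition 3.2).

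The genuine gap is in $(1) \Rightarrow (5)$. Your plan is to encode $f$ so that any interpolation $F(T^s x) = f(s)$ forces an identity $F \circ T^m = F$ on a dense orbit, hence everywhere, hence constancy of $F$. You carry this out only when $S$ is a single residue class modulo the gap bound $L$, and you explicitly flag the general case as ``the delicate step'' to be handled by some unspecified ``combinatorial packaging of the gap structure of $S$ into $f$.'' That step is not a routine gap-fill: as you yourself observe with the circle-rotation example, $\{T^s x : s \in S\}$ can fail to be dense, so the periodicity constraint coming from $f$ lives on a set that may not propagate by continuity, and there is no obvious way to pigeonhole residues so that a genuine arithmetic progression (rather than a Bohr-type return set) lands inside $S$. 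The paper's Lemma 3.4 avoids this entire difficulty by a different mechanism: it never tries to force an equation $F \circ T^m = F$. Instead, with $\bigcup_{i=0}^{k-1}(S-i) \supset \N$ and $h > k$, it sets $S_i = S \cap (h^2\N + [ih,(i+1)h))$ and shows via the covering $X_i \cup T^{-1}X_i \cup \cdots \cup T^{-(k-1)}X_i \supset \overline{T^{h^2\N + ih}x} \supset X \setminus \{x\}$ that any invariant probability measure gives $\mu(\overline{T^{S_i}x}) \geq 1/k$ for all $h$ values of $i$, so two of the closures must intersect; this makes it impossible for a continuous $F$ to take distinct constant values on $T^{S_i}x$ and $T^{S_j}x$. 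The crucial point is that $T^{h^2\N + ih}x$ is dense (your lemma), even though $T^{S_i}x$ is not, and a measure-pigeonhole replaces the periodicity argument. As written, your proof does not establish $(1) \Rightarrow (5)$, and the missing step is the heart of the theorem.
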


We do not know whether weak interpolation of order $2$ for totally transitive systems is also equivalent to the condition that $S$ is not syndetic; see \cref{Q2synd}.

\begin{Remark}
    It is natural to wonder why we did not consider interpolation sets for the class of transitive systems. 
    This is because by our definition, every subset of $\N$ is such an interpolation set. Indeed, let $z = (z(n))_{n \in \N_0}$ be an arbitrary bounded sequence and let $X$ be the closure of the orbit of $z$ under the left shift $\sigma$. 
    By definition, the system $(X, \sigma)$ is transitive with $z$ as a transitive point. In addition, $F(\sigma^{n} z) = z(n)$ for all $n \in \N$ for the continuous function $F: X \to \C$ defined by $F((x(n))_{n \in \N_0}) = x(0)$. 
\end{Remark}

\subsection{Minimal and totally minimal systems}

Recall that $\mathcal{M}$ denotes the class of bounded sequences whose shift closure forms a minimal dynamical system and $\mathcal{U}$ is the closed, shift invariant subalgebra of $\ell^{\infty}(\N)$ generated by $\mathcal{M}$. 
In \cite[Theorem 1]{Glasner-Weiss-Interpolation}, Glasner and Weiss showed that $S$ is an interpolation set for $\mathcal{U}$ if and only if $S$ is not piecewise syndetic.\footnote{See \cref{sec_background} for definition; in \cite{Glasner-Weiss-Interpolation}, non-piecewise syndetic sets are called \emph{small sets}.} (See also \cite[Theorem 10.2]{Glasner-Tsankov-Weiss-Zucker-bernoulli-disjointness}.)
Since $\mathcal{M} \subset \mathcal{U}$, it follows that an interpolation set for $\mathcal{M}$ cannot be piecewise syndetic. 
Conversely, in the same paper \cite{Glasner-Weiss-Interpolation}, they also proved that every non-piecewise-syndetic set is a weak interpolation set of order 2 for $\mathcal{M}$. 
These results raise the natural question: is every non-piecewise-syndetic set a (strong) interpolation set for $\mathcal{M}$? 
 Our next theorem confirms that this is indeed the case.  
In addition, we show that every non-piecewise syndetic set is  an interpolation set for a smaller class, namely, totally minimal systems. (The system $(X,T)$ is \emph{totally minimal} if $(X,T^n)$ is minimal for all $n\in\N.$)

\begin{Theorem}\label{thm:interpolation_minimal_systems_new}
Let $S \subset \N$. The following are equivalent:
\begin{enumerate}
    \item 
    $S$ is a weak interpolation set of order $2$ for minimal systems. 
    
    \item 
    $S$ is a weak interpolation set of all orders for minimal systems.
    
    \item 
    $S$ is an interpolation set for minimal systems.
    
    \item 
    $S$ is an interpolation set for totally minimal systems.

    \item 
    $S$ is not piecewise syndetic.
\end{enumerate}
\end{Theorem}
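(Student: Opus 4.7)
The cycle of implications $(4)\Rightarrow(3)\Rightarrow(2)\Rightarrow(1)$ is immediate, since totally minimal systems are minimal and strong interpolation implies weak interpolation of all orders, which implies weak interpolation of order~$2$. For $(1)\Rightarrow(5)$ I would argue the contrapositive: if $S$ is piecewise syndetic, the Glasner-Weiss argument that rules out (strong) interpolation into $\mathcal{U}$ in fact produces an explicit $\{0,1\}$-valued function on $S$ with no extension to any sequence in $\mathcal{U}$; since $\mathcal{M}\subseteq\mathcal{U}$, that function cannot be extended to a minimal sequence either, so $S$ fails weak interpolation of order~$2$ for minimal systems.

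The substantive direction is $(5)\Rightarrow(4)$: every non-piecewise-syndetic set is an interpolation set for totally minimal systems. Given a bounded $f:S\to\C$, the plan is to construct a sequence $y$ valued in a compact subset of $\C$ with $y|_S=f$ whose shift orbit closure is totally minimal; taking $F(x)=x(0)$ then yields the desired interpolation. I would first reduce to the finite-alphabet case by approximating $f$ with coarsenings $f_k:S\to\Sigma_k$ of diameter at most $2^{-k}$, and then assemble the totally minimal subshifts obtained at each level as an inverse limit. Inverse limits of (totally) minimal systems are (totally) minimal, so the limit has the required properties provided the constructions at each level are compatible; this is arranged by performing them all inside a single nested Toeplitz scheme rather than independently.

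The combinatorial core is the finite-alphabet construction. Fix a finite alphabet $\Sigma$ and $f:S\to\Sigma$. The non-piecewise-syndetic hypothesis unpacks as: for every $N$ there is an $M$ such that every interval of length $M$ contains an $S$-free subinterval of length $N$. Choose pairwise coprime integers $p_1<p_2<\cdots\to\infty$. I would build $y\in\Sigma^{\N_0}$ by a hierarchical Toeplitz procedure: first declare $y(n)=f(n)$ for $n\in S$; then at stage $i$, use the non-piecewise-syndetic hypothesis to locate long $S$-free windows and impose $p_i$-periodicity on a designated residue class modulo $p_i$ within those windows, defining $y$ at the newly-covered positions. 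Iterating over all $i$ and all residue classes fills $\N_0$. Minimality of the orbit closure follows from uniform recurrence: each initial segment of $y$ is forced to recur with gaps controlled by $p_i$. Total minimality follows from the pairwise coprimality of the $p_i$'s: for any $d\geq 1$, infinitely many $p_i$ are coprime to $d$, so by the Chinese Remainder Theorem any long arithmetic progression with common difference $d$ visits every residue class modulo $p_i$, forcing $(\overline{\{\sigma^n y\}},\sigma^d)$ to be minimal.

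The principal obstacle is arranging the hierarchical Toeplitz imprints to coexist with the prescribed values of $f$ on $S$. This is precisely what the non-piecewise-syndeticity hypothesis is designed to handle: it supplies arbitrarily long $S$-free windows at every scale, so each periodic layer can be placed entirely within positions unconstrained by $f$. A further bookkeeping concern arises when passing to the complex-valued case via inverse limits, where the individual constructions must share a common hierarchical skeleton to ensure total minimality survives in the limit; this is resolved by fixing the coprime-period sequence $\{p_i\}$ once and for all and running every refinement of the alphabet within it.
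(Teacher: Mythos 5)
Your reduction cycle $(4)\Rightarrow(3)\Rightarrow(2)\Rightarrow(1)$ and the appeal to the Glasner--Weiss argument for $(1)\Rightarrow(5)$ both match the paper. The gap is in $(5)\Rightarrow(4)$, and it is a fatal one: the hierarchical Toeplitz construction you propose cannot yield a totally minimal system. Any nontrivial Toeplitz sequence, regardless of how the periods $p_1, p_2, \ldots$ are chosen, has a nontrivial odometer $G = \varprojlim \Z/(p_1\cdots p_k)\Z$ as its maximal equicontinuous factor. Taking $d = p_1$, the rotation $(G, +d)$ is not minimal (the $\Z/p_1\Z$-coordinate is fixed), and since $(G, +d)$ is a factor of $(X, \sigma^d)$, the orbit closure $X$ is not totally minimal. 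Your CRT argument is also internally flawed: coprimality of $d$ to \emph{infinitely many} $p_i$ is not enough; you would need $\gcd(d, p_i) = 1$ for \emph{all} $i$, which is impossible for $d = p_1$. So even granting the periodic imprints, you have only reproved minimality, not total minimality.

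A secondary issue is the phrase ``impose $p_i$-periodicity $\ldots$ within those windows.'' If the periodicity is imposed only inside $S$-free windows, $y$ is no longer a Toeplitz sequence, and then neither the minimality claim (``each initial segment is forced to recur'') nor the CRT/total-minimality claim has any support; moreover it is no longer clear the iteration covers $\N_0\setminus S$. You are caught between two regimes: genuine Toeplitz (odometer obstruction kills total minimality) or not-really-Toeplitz (no structure to drive either recurrence or total minimality). The paper avoids the odometer obstruction precisely by abandoning periodic imprints and instead concatenating blocks of two \emph{different} lengths $m_k$ and $m_k + 1$, with a density condition requiring the base word $w_k$ (and all near-copies from $\epsilon$-dense finite subsets of $X_k, X'_k$) to occur syndetically \emph{in every residue class mod $k!$}. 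The length-$(m_k+1)$ blocks destroy any fixed period, while the residue-class density conditions directly force uniform recurrence of $y$ under $\sigma^j$ for all $j \leq k$ (since $j \mid k!$), giving total minimality with no odometer in sight. Your inverse-limit reduction to finite alphabets, while not incorrect, is also unnecessary: the paper works directly over a compact alphabet $K$ by taking $2^{-k}$-dense finite subsets $T_k \subset X_k$ at each stage.
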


\subsection{Systems of bounded entropy}

Topological entropy is a measure of the complexity of a dynamical system (see \cref{def:entropy}). 
It turns out that classifying interpolation sets for the class of systems whose entropy is bounded by a fixed constant is simple: 
such sets must have zero Banach density.

\begin{Proposition}\label{prop:interpolation_finite_entropy}
Let $S \subset \N$ and $M\geq0$. The following are equivalent:
\begin{enumerate}
    \item $S$ is an interpolation set for systems of finite topological entropy.

    \item $S$ is an interpolation for systems of topological entropy $\leq M$.

    \item $S$ is an interpolation set for systems of zero topological entropy.
    
    \item $S$ has zero Banach density, i.e., $d^*(S) = 0$ (see \cref{sec_background} for definition).
\end{enumerate}
\end{Proposition}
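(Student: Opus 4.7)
My plan is to prove the chain $(3) \Rightarrow (2) \Rightarrow (1) \Rightarrow (4) \Rightarrow (3)$. The implications $(3) \Rightarrow (2) \Rightarrow (1)$ are immediate from the nested class inclusions $\{h_{\mathrm{top}} = 0\} \subseteq \{h_{\mathrm{top}} \leq M\} \subseteq \{h_{\mathrm{top}} < \infty\}$. For $(4) \Rightarrow (3)$, I construct an explicit zero-entropy system realizing $f$. Given a bounded $f \colon S \to \C$ normalized so that $|f| \leq 1$, extend to $\tilde f \in \overline{\mathbb{D}}^{\N_0}$ (with $\overline{\mathbb{D}}$ the closed unit disk) by $\tilde f|_{\N_0 \setminus S} \equiv 0$, and take $X$ to be the orbit closure of $\tilde f$ under the left shift $\sigma$. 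Then $\tilde f$ is transitive in $(X,\sigma)$ and $F(z) = z_0$ is continuous with $F(\sigma^n \tilde f) = f(n)$ on $S$. Zero topological entropy of $(X,\sigma)$ follows since each $L$-window $\tilde f|_{[N, N+L)}$ is supported on at most $\delta_L L$ positions, where $\delta_L := \sup_N |S \cap [N, N+L)|/L \to 0$ by hypothesis, so the number of $\epsilon$-separated $L$-windows is at most $(C_\epsilon)^{\delta_L L} = e^{o(L)}$ for each fixed $\epsilon > 0$.

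For $(1) \Rightarrow (4)$ I argue by contrapositive. Suppose $d^*(S) = \alpha > 0$; I will exhibit $f \colon S \to \overline{\mathbb{D}}$ that no finite-entropy system interpolates. Enumerate $S = \{s_1 < s_2 < \cdots\}$ and pick $g \in \overline{\mathbb{D}}^{\N}$ with $h(\overline{o_\sigma(g)}) = \infty$; such $g$ exists because the full shift on the infinite compact alphabet $\overline{\mathbb{D}}$ has infinite topological entropy. Set $f(s_i) := g(i)$. If some $(X, T, x, F)$ with $h(X, T) =: h < \infty$ interpolates $f$, then $y_n := F(T^n x)$ satisfies $y|_S = f$, and the orbit closure $Y := \overline{o_\sigma(y)} \subseteq \overline{\mathbb{D}}^{\N_0}$ is a topological factor of $(X,T)$ via $z \mapsto (F(T^n z))_n$, so $h(Y, \sigma) \leq h < \infty$. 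The plan is to derive a contradiction by showing that the ``$S$-subsampling'' $z := (y_{s_i})_i = g$ must also have finite topological entropy, violating the choice of $g$.

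The main technical step, and the hardest part, is the entropy bound $h(\overline{o_\sigma(z)}) < \infty$. For $S$ an arithmetic progression of common difference $d$ this is easy: $(Y, \sigma^d)$ realizes $z$ as a factor, and $h(Y, \sigma^d) = d \cdot h(Y, \sigma)$ gives $h(\overline{o_\sigma(z)}) \leq h/\alpha$ with $\alpha = 1/d$. For a general $S$ of positive upper Banach density I fix $\epsilon > 0$ and select density-rich intervals $I_k = [N_k, N_k + L_k)$ with $L_k \to \infty$ and $L'_k := |S \cap I_k| \geq (\alpha/2) L_k$. There is an index $p_k$ with $z[p_k, p_k + L'_k) = y|_{S \cap I_k}$, a coordinate projection of the $L_k$-window $y|_{I_k}$, which lies in one of at most $e^{(h + o(1)) L_k} \leq e^{((2h/\alpha) + o(1)) L'_k}$ many $\epsilon$-equivalence classes of $y$-windows. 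Upgrading this single-position bound into a uniform bound on the total count $N_Z(L'_k, \epsilon)$ of $\epsilon$-separated $L'_k$-words of $z$ (across all starting positions, not just $p_k$) is the principal obstacle; it is handled by combining Fekete's subadditivity lemma for $L \mapsto \log N_Z(L, \epsilon)$, which identifies the exponential growth rate with the infimum, with the fact that positive upper Banach density of $S$ forces density-rich configurations to recur syndetically among the starting positions $p$, so that the bound on $z[p_k, p_k + L'_k)$ propagates to all shifts. This yields $h(\overline{o_\sigma(z)}) \leq 2h/\alpha < \infty$, contradicting $h(\overline{o_\sigma(g)}) = \infty$ and completing the contrapositive.
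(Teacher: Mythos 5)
Your implications $(3) \Rightarrow (2) \Rightarrow (1)$ and the construction in $(4) \Rightarrow (3)$ are fine in substance (though the count $(C_\epsilon)^{\delta_L L}$ for $\epsilon$-separated $L$-windows omits the positional factor $\binom{L}{\delta_L L}$; since this is still $e^{o(L)}$ the conclusion survives). The paper handles $(4)\Rightarrow(3)$ a bit differently, observing that every point of the orbit closure of $\tilde f$ has zero-Banach-density support, hence by the ergodic theorem the only invariant measure is the point mass at the zero sequence, and then invokes the variational principle; but your counting argument is a legitimate alternative.

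The genuine problem is in $(1) \Rightarrow (4)$. Your plan hinges on the unproven lemma that if $h\big(\overline{o_\sigma(y)}\big) < \infty$ and $d^*(S)=\alpha>0$ then the subsampled sequence $z_i = y_{s_i}$ also has finite-entropy orbit closure. You flag this as ``the principal obstacle,'' but the suggested repair does not close it. First, the statement that ``positive upper Banach density of $S$ forces density-rich configurations to recur syndetically among the starting positions'' is simply false: $d^*$ only asserts the existence of arbitrarily long dense intervals, not their syndetic recurrence. For example, if $S=\bigcup_n[2^{2^n},2^{2^n}+n)$ then $d^*(S)=1$, yet the density-rich intervals have doubly-exponential gaps and there are arbitrarily long intervals disjoint from $S$. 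Second, even granting control over the positions $p_k$ you select, Fekete's lemma applied to $L\mapsto\log N_Z(L,\epsilon)$ requires an upper bound on $N_Z(L,\epsilon)$ --- the count over \emph{all} starting positions of $z$ --- whereas your density-rich intervals only bound the equivalence class of a single $L'_k$-window at one position $p_k$. Windows of $z$ that straddle sparse stretches of $S$ span extremely long $y$-intervals, and your argument does not control how many $\epsilon$-classes they produce. The paper avoids the subsampling detour entirely: given $d^*(S)>\delta>0$, it directly manufactures a function $f\colon S\to\mathcal S^1$ that, for every $k$ and every recurring pattern $S_m\subset[1,m]$ of $\geq\delta m$ elements of $S$, realizes all $k^{|S_m|}$ strings of $k$th roots of unity along shifts of $S_m$. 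Any interpolating $(X,T,x,F)$ then yields, for each $m$, an $(m,\epsilon)$-separated set of size $k^{\delta m}$ in $X$ (with $\epsilon$ coming from the uniform continuity of $F$), forcing $h(X,T)\geq\delta\log k$; since $k$ is arbitrary this contradicts finite entropy. This gives the needed lower bound on the entropy of the interpolating system without ever needing to control the entropy of a subsampled sequence, and it is the strategy you should aim for if you want to close your gap.
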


As a consequence of the fact that the full shift on a finite alphabet has finite entropy, we obtain the following observation regarding weak interpolation sets.

\begin{Proposition}\label{prop:weak-finite-entropy}
    Every subset $S \subset \N$ is a weak interpolation set of all orders for systems of finite entropy.
\end{Proposition}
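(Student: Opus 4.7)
The plan is to realize an arbitrary bounded $\{0,\dots,k-1\}$-valued function on $S$ as a coordinate sequence inside a subshift of the full $k$-shift, and exploit the fact that full shifts on finite alphabets have finite topological entropy.

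More concretely, fix $k\in\N$ and a function $f:S\to\{0,\dots,k-1\}$. I would let $X_0=\{0,\dots,k-1\}^{\N_0}$ with the left shift $\sigma$, which is a compact metric system with topological entropy $\log k<\infty$. Define a point $x\in X_0$ by setting $x(n)=f(n)$ for $n\in S$ and $x(n)=0$ (or any fixed symbol) for $n\in\N_0\setminus S$. Let $X:=\overline{\{\sigma^n x:n\in\N_0\}}\subset X_0$ be the orbit closure of $x$ under $\sigma$; then $(X,\sigma)$ is a topological dynamical system, $x$ is a transitive point of $X$ by construction, and the topological entropy of $(X,\sigma)$ is bounded above by that of $(X_0,\sigma)$, hence is at most $\log k$.

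Finally, I would take $F:X\to\C$ to be the continuous function $F(y)=y(0)$ (evaluation at the zeroth coordinate). A direct computation shows $F(\sigma^n x)=x(n)=f(n)$ for every $n\in S$, so $(X,\sigma)$, $x$, and $F$ witness that $S$ is a weak interpolation set of order $k$ for the class of finite-entropy systems. Since $k$ was arbitrary, $S$ is a weak interpolation set of all orders, as claimed.

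There is really no obstacle here: the proof reduces to the observation that the full shift on finitely many symbols has finite entropy, and that any prescribed partial sequence in finitely many symbols embeds trivially into this shift. The only subtle point worth stating explicitly is that one must pass to the orbit closure of $x$ (rather than taking $X=X_0$) in order for $x$ to be transitive in the ambient system, which is required by \cref{def_interpolation_weak}; entropy only decreases under this restriction, so the finite-entropy property is preserved.
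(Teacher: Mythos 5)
Your proof is correct and is essentially the same as the paper's: both realize the partial function as a point in the full $k$-shift, pass to the orbit closure (entropy $\leq \log k < \infty$), and read off values with the zeroth-coordinate function. The only cosmetic difference is that the paper first reduces to the case $S = \N$ before taking the orbit closure, whereas you extend $f$ by zero off $S$ directly; these are the same idea.
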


While both of the above results are relatively simple, it turns out that a more interesting phenomenon emerges when considering weak interpolation sets of a fixed order $k$.
To state the next theorem, we define the entropy function $H: [0, 1] \to [0, \log 2]$ as
\[
     H(\delta) = \begin{cases} 
        - \delta \log \delta - (1 - \delta) \log (1 - \delta), \text{ if } \delta \in (0, 1)\\
        0, \text{ if } \delta = 0 \text{ or } 1.
    \end{cases}   
\]

\begin{Theorem}\label{thm_entropydensityclassification_intro_new}
Let $\delta\in[0,1/2]$. For any $k \in \N$ and $S \subset \N,$ we have the following: 
\begin{enumerate}
\item \label{item:entropy_less_delta}
If $d^*(S)\leq \delta$, then $S$ is a weak interpolation set of order $k$ for systems of entropy $\leq H(\delta) + \delta \log(k-1)$.

\item \label{item:entropy_greater_delta} If $d^*(S) > \delta$, then $S$ is not a weak interpolation set of order $k$ for the class of systems of entropy $\leq \delta \log k$.

\end{enumerate}
Moreover, (\ref{item:entropy_greater_delta}) is sharp in the following sense:
\begin{enumerate}
\item [3.] \label{item:entropy_equal} There exists $S \subset \N$ with $d^*(S) = \delta$ such that $S$ is a weak interpolation set of order $k$ for the class of systems of entropy $\leq \delta \log k$.
\end{enumerate}
\end{Theorem}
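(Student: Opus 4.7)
For (1), the plan is to extend $f$ by zeros and bound entropy via direct counting. Given $f\colon S\to\{0,\dots,k-1\}$, define $\tilde f(n)=f(n)$ for $n\in S$ and $\tilde f(n)=0$ for $n\in\N_0\setminus S$; take $X=\overline{o_\sigma(\tilde f)}\subset\{0,\dots,k-1\}^{\N_0}$ with $\sigma$ the left shift, transitive point $\tilde f$, and continuous $F=\pi_0$ the zeroth-coordinate projection. Since $d^*(S)\leq\delta$, for every $\epsilon>0$ every sufficiently long window meets $S$ in at most $(\delta+\epsilon)N$ positions, so each length-$N$ factor of $\tilde f$ has at most $(\delta+\epsilon)N$ nonzero coordinates with $k-1$ choices at each. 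Thus the number of length-$N$ factors of $X$ is at most $\sum_{j\leq(\delta+\epsilon)N}\binom{N}{j}(k-1)^j$; for $\delta\leq 1/2$ this sum is bounded above by $N\binom{N}{\lfloor(\delta+\epsilon)N\rfloor}(k-1)^{(\delta+\epsilon)N}$, and Stirling yields $h_{\mathrm{top}}(X)\leq H(\delta+\epsilon)+(\delta+\epsilon)\log(k-1)$. Sending $\epsilon\to 0$ proves (1).

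For (2), I would argue by contradiction via a measure-theoretic entropy computation. Suppose $S$ is a weak interpolation set of order $k$ for systems of entropy $\leq h:=\delta\log k$ while $d^*(S)>\delta$. By composing every continuous $F$ with a fixed continuous truncation of $\C$ onto a compact disc $D_k\supset\{0,\dots,k-1\}$, every extension $y_f$ of $f$ can be taken in the compact space $D_k^{\N_0}$. The relation ``$y|_S=f$ and $h_{\mathrm{top}}(\overline{o_\sigma(y)})\leq h$'' on $\{0,\dots,k-1\}^S\times D_k^{\N_0}$ has nonempty sections by hypothesis and analytic structure, so a measurable selection theorem produces a Borel map $f\mapsto y_f$. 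Push the uniform Bernoulli measure $\beta$ on $\{0,\dots,k-1\}^S$ forward to $\nu$ on $D_k^{\N_0}$; choose $(M_j,N_j)$ with $N_j\to\infty$ and $|S\cap[M_j,M_j+N_j-1]|/N_j\to d^*(S)$, and form the Banach averages $\bar\nu_j:=N_j^{-1}\sum_{n=M_j}^{M_j+N_j-1}\sigma^n_*\nu$. A subsequential weak-$*$ limit $\bar\nu$ is $\sigma$-invariant on $D_k^{\N_0}$.

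The plan is then to sandwich $h_{\bar\nu}(\sigma)$ between $d^*(S)\log k$ and $h$, contradicting $d^*(S)>\delta$. For the upper bound, write $\bar\nu=\int\mu_f\,d\beta(f)$ where $\mu_f$ is a subsequential weak-$*$ limit of $N_j^{-1}\sum_n\delta_{\sigma^n y_f}$, shift-invariant on $\overline{o_\sigma(y_f)}$: the variational principle gives $h_{\mu_f}(\sigma)\leq h$, and passing to the ergodic decomposition (on which Kolmogorov--Sinai entropy is affine) yields $h_{\bar\nu}(\sigma)\leq h$. For the lower bound, take the Borel partition $\xi_\epsilon=\{B(0,\epsilon),\dots,B(k-1,\epsilon),\,D_k\setminus\bigcup_{j=0}^{k-1}B(j,\epsilon)\}$ of $D_k$ for small $\epsilon$ (chosen so that the atoms of $\xi_\epsilon^N$ are $\bar\nu$-continuity sets, possible for a.e.\ $\epsilon$). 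By concavity of Shannon entropy, $H_{\bar\nu_j}(\xi_\epsilon^N)\geq N_j^{-1}\sum_n H(\sigma^n_*\nu|_{\xi_\epsilon^N})$; and since the $\xi_\epsilon^N$-marginal of $\sigma^n_*\nu$ restricted to the $S$-positions inside $[n,n+N-1]$ is exactly uniform Bernoulli on $\{B(0,\epsilon),\dots,B(k-1,\epsilon)\}^{|S\cap[n,n+N-1]|}$, each term is at least $|S\cap[n,n+N-1]|\log k$. The double-counting identity $N_j^{-1}\sum_{n=M_j}^{M_j+N_j-1}|S\cap[n,n+N-1]|=N\cdot|S\cap[M_j,M_j+N_j-1]|/N_j+O(N^2/N_j)$ lets $j\to\infty$ deliver $H_{\bar\nu}(\xi_\epsilon^N)\geq N\,d^*(S)\log k-o(N)$, whence $h_{\bar\nu}(\sigma)\geq h_{\bar\nu}(\sigma,\xi_\epsilon)\geq d^*(S)\log k$.

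For (3), I would construct $S$ explicitly depending on $\delta$. If $\delta=p/q\in[0,1/2]\cap\Q$, take $S=\{n\in\N:n\bmod q\in A\}$ with $A\subset\{0,\dots,q-1\}$ of cardinality $p$, so $d^*(S)=\delta$; for any $f\colon S\to\{0,\dots,k-1\}$, the extension $y\in\{0,\dots,k-1\}^{\N_0}$ given by $y|_S=f$ and $y=0$ off $S$ has its length-$N$ factors determined by the residue mod $q$ of the starting index (at most $q$ choices) and the $f$-values at the $S$-positions in the window (at most $k^{\lceil\delta N\rceil}$ choices), so $h_{\mathrm{top}}(\overline{o_\sigma(y)})\leq\delta\log k$. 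For irrational $\delta\in(0,1/2)$, take $S$ to be a Beatty set of density $\delta$ whose indicator is a Sturmian/mechanical word: then there are only $O(N)$ distinct ``$S$-masks'' $(S-n)\cap[0,N-1]$ across length-$N$ windows, each combined with at most $k^{\delta N+o(N)}$ assignments of $f$, giving $h_{\mathrm{top}}\leq\delta\log k$ again. The principal obstacle is in (2): the delicate technical steps are the measurable selection of $f\mapsto y_f$ and the need to replace $\xi$ by the continuity partition $\xi_\epsilon$, while the crux insight is that Banach density enters precisely through the double-counting identity, which converts a pointwise Bernoulli lower bound on $H(\sigma^n_*\nu|_{\xi_\epsilon^N})$ into the asymptotic $d^*(S)\cdot N\log k$ lower bound on $H_{\bar\nu}(\xi_\epsilon^N)$.
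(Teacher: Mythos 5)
Parts (1) and (3) of your plan track the paper's proof closely: for (1), the zero-extension to $\N_0$, the orbit-closure subshift, the count of length-$N$ factors with at most $(\delta+\epsilon)N$ nonzero entries, and the Stirling estimate are all the same steps; for (3), the paper uses the single Beatty/Sturmian set $S=\{\lfloor n/\delta\rfloor\colon n\in\N\}$ for every $\delta\in(0,1/2]$ (periodic when $\delta$ is rational), whereas you split into rational and irrational cases, but the word-complexity-plus-$k^{\lceil\delta N\rceil}$ count is identical in spirit.

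Your argument for (2) is genuinely different from the paper's and is where the proposal is not yet a proof. The paper's route is elementary and combinatorial: pick $\delta<\delta'<d^*(S)$, use pigeonhole to find arbitrarily large $m$ and a fixed configuration $S_m\subset[1,m]$ with $|S_m|\geq\delta' m$ such that $S_m+n\subset S$ for infinitely many $n$, then build a \emph{single} function $f\colon S\to\{0,\dots,k-1\}$ that realises every one of the $k^{|S_m|}$ patterns on some translate $S_m+n$. Any continuous interpolant $\phi$ on a transitive orbit then has $k$ disjoint compact level sets at pairwise distance $\geq\epsilon$, forcing the translates $T^{n}x$ to form an $(m,\epsilon)$-separated set of size $\geq k^{\delta' m}$, so $h_{\mathrm{top}}\geq\delta'\log k>\delta\log k$. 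Your route instead averages over all $f$: you invoke a measurable selection $f\mapsto y_f$, push forward the uniform Bernoulli measure, take Banach averages along the intervals witnessing $d^*(S)$, and sandwich the Kolmogorov--Sinai entropy of the limit measure between $d^*(S)\log k$ (Bernoulli lower bound on $S$-coordinates via concavity and the double-counting identity) and $\delta\log k$ (variational principle plus affinity of entropy over the integral decomposition $\bar\nu=\int\mu_f\,d\beta$). The sandwich is a nice idea, but it requires much heavier machinery than the problem needs, and it leaves a real gap: you assert that the relation $\{(f,y): y|_S=f,\ h_{\mathrm{top}}(\overline{o_\sigma(y)})\leq h\}$ has ``analytic structure'' so that a selection theorem applies, but you do not justify that $y\mapsto h_{\mathrm{top}}(\overline{o_\sigma(y)})$ is even Borel on $D_k^{\N_0}$, a space of subshifts over an uncountable alphabet. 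Without that, the pushforward $\nu$ is not defined, and the rest of the argument (the decomposition $\bar\nu=\int\mu_f\,d\beta$, the measurable choice of subsequences producing the $\mu_f$ simultaneously) inherits the same problem. The paper's construction of a single adversarial $f$ sidesteps all of this, which is precisely why it is the cleaner route; if you wish to keep a measure-theoretic argument, you should at minimum replace the appeal to measurable selection with a finitary approximation (work with $S\cap[1,L]$, pick one extension per pattern by hand, and pass to a limit), which in effect reconstructs the paper's separated-set bound.
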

We do not know whether (\ref{item:entropy_less_delta}) is sharp, i.e. whether the bound $H(\delta) + \delta \log(k-1)$ could be improved, and leave this as a question.

\subsection{Uniquely ergodic, strictly ergodic, and zero entropy}

A topological system $(X,T)$ is \emph{uniquely ergodic} if there is only one Borel measure $\mu$ on $X$ that is preserved by $T$ (i.e., $\mu(A) = \mu(T^{-1}A)$ for every Borel set $A\subset X$), and is \emph{strictly ergodic} if it is uniquely ergodic and minimal. 
The next theorem characterizes interpolation sets for uniquely ergodic systems, strictly ergodic systems, and strictly ergodic systems of zero entropy. (We note that as in \cref{prop:interpolation_finite_entropy}, the characterization is in terms of zero Banach density; we elected not to combine the theorems due to the different proof techniques and for readability.)

\begin{Theorem}\label{prop:uniquely_ergodic_one}
Let $S \subset \N$. The following are equivalent:
\begin{enumerate}
    \item 
    $S$ is a weak interpolation set of all orders for uniquely ergodic systems.
    
    \item 
    $S$ is an interpolation set for uniquely ergodic systems.

    \item 
    $S$ is an interpolation set for strictly ergodic systems.

    \item $S$ is an interpolation set for strictly ergodic systems of zero entropy.

    \item 
    $d^*(S) = 0$.
\end{enumerate}
\end{Theorem}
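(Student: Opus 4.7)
The implications $(4)\Rightarrow(3)\Rightarrow(2)\Rightarrow(1)$ are immediate from the inclusions of the corresponding classes (strictly ergodic zero-entropy $\subset$ strictly ergodic $\subset$ uniquely ergodic) together with the trivial fact that every strong interpolation set is a weak interpolation set of all orders. It therefore suffices to prove $(1)\Rightarrow(5)$ and $(5)\Rightarrow(4)$.

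For $(1)\Rightarrow(5)$ the plan is to argue by contrapositive. Suppose $\delta:=d^*(S)>0$ and fix an integer $k$ with $k\delta>1$; I will exhibit $f:S\to\{0,\ldots,k-1\}$ that no uniquely ergodic system can realize, contradicting weak interpolation of order $k$ (and hence of all orders). From the definition of upper Banach density, there is a pairwise disjoint sequence of intervals in $\N$ whose lengths tend to infinity and on which the density of $S$ tends to $\delta$. Thinning this sequence into $k$ disjoint subsequences produces intervals $(I_{i,j})_{i\in\N,\,j\in\{0,\ldots,k-1\}}$, mutually disjoint, with $|I_{i,j}|\to\infty$ and $|S\cap I_{i,j}|/|I_{i,j}|\to\delta$ as $i\to\infty$ for each fixed $j$. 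Define $f(n):=j$ when $n\in S\cap I_{i,j}$ for some $i$, and $f(n):=0$ for all other $n\in S$. Assume for contradiction that some uniquely ergodic $(X,T,\mu)$, transitive point $x$ and $F\in C(X)$ realize $f$. For each $j$ the set $A_j:=F^{-1}(\{j\})$ is closed and contains $T^n x$ for every $n\in S\cap I_{i,j}$. Approximating $\mathbb{1}_{A_j}$ from above by a continuous $g\geq\mathbb{1}_{A_j}$ with $\int g\,d\mu\leq\mu(A_j)+\epsilon$ (outer regularity plus Urysohn) and invoking the uniform Birkhoff theorem for uniquely ergodic systems yields
\[
\mu(A_j)+\epsilon\;\geq\;\int g\,d\mu\;=\;\lim_{i\to\infty}\frac{1}{|I_{i,j}|}\sum_{n\in I_{i,j}} g(T^n x)\;\geq\;\lim_{i\to\infty}\frac{|S\cap I_{i,j}|}{|I_{i,j}|}\;=\;\delta.
\]
Letting $\epsilon\to 0$ gives $\mu(A_j)\geq\delta$. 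Since the $A_j$ are pairwise disjoint, $1\geq\sum_{j=0}^{k-1}\mu(A_j)\geq k\delta>1$, a contradiction.

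For $(5)\Rightarrow(4)$, given a bounded $f:S\to\C$ with $d^*(S)=0$, the goal is to extend $f$ to a bounded sequence $\phi:\N\to\C$ whose orbit closure $(X_\phi,\sigma)$ under the shift is strictly ergodic with zero topological entropy; then $x:=\phi$ and $F(y):=y(0)$ supply the required realization. The construction of $\phi$ is a Toeplitz-style iteration: fix a rapidly growing divisible chain of periods $p_1\mid p_2\mid\cdots$ and cutoffs $L_n\to\infty$, and inductively produce $\phi_n$ periodic of period $p_n$ such that $\phi_n(s)=f(s)$ for every $s\in S\cap[0,L_n]$, and such that $\phi_n$ differs from $\phi_{n-1}$ only on a set of Banach density at most $\epsilon_n$, with $\sum_n\epsilon_n<\infty$. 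The hypothesis $d^*(S)=0$ is exactly what makes the inductive step possible: for each $n$, only an asymptotically negligible proportion of each period needs to be modified to insert the required values of $f$. The pointwise limit $\phi=\lim_n\phi_n$ is therefore a regular Toeplitz sequence, it satisfies $\phi|_S=f$ by construction, and by the classical theory of Toeplitz subshifts its orbit closure is minimal, uniquely ergodic, and of zero topological entropy.

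The conceptual heart of the proof is the pigeonhole argument in $(1)\Rightarrow(5)$: the key realisation is that each of the $k$ distinct values of $f$ is imposed on a set visited by the orbit with density at least $\delta$, which upper-semicontinuity of $\mathbb{1}_{A_j}$ combined with unique ergodicity promotes to $\mu(A_j)\geq\delta$, forcing $k\leq 1/\delta$. The main technical obstacle is the Toeplitz construction in $(5)\Rightarrow(4)$: one has to choose $p_n$, $L_n$ and $\epsilon_n$ simultaneously so that (a) the sequences $\phi_n$ converge pointwise, (b) the limit $\phi$ genuinely extends $f$ on all of $S$, and (c) the resulting sequence is regularly Toeplitz, so that its subshift has the claimed strict ergodicity and vanishing entropy.
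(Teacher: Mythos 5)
Your implications $(4)\Rightarrow(3)\Rightarrow(2)\Rightarrow(1)$ and the contrapositive argument $(1)\Rightarrow(5)$ are correct, and the latter is essentially the same argument the paper uses in its Lemma~6.1 (Lemma~\ref{thm:finite_alphabet_uniquely_ergodic}): split a sequence of intervals witnessing $d^*(S)>1/k$ into $k$ subfamilies, tag each subfamily with a distinct value in $\{0,\dots,k-1\}$, and use the uniform convergence of ergodic averages in a uniquely ergodic system (together with Urysohn-type upper approximants of the indicator functions of $F^{-1}(\{j\})$) to force each set to have measure $>1/k$, a pigeonhole contradiction.

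However, your $(5)\Rightarrow(4)$ sketch has a genuine gap and, as written, cannot be made to work. The central problem is that the pointwise limit $\phi$ of your periodic approximations $\phi_n$ is \emph{not} a Toeplitz sequence, and the appeal to the classical theory of regular Toeplitz subshifts (minimality, unique ergodicity, zero entropy) is therefore unjustified. For $\phi$ to be Toeplitz, every position $j$ must have a period $p$ so that $\phi(j+kp)=\phi(j)$ for all $k$. Taking $j=s\in S$, this forces $\phi(s+kp)=f(s)$ along the whole arithmetic progression $s+p\Z$. But since $S$ is infinite and the periods come from a fixed divisibility chain $p_1\mid p_2\mid\cdots$, the pigeonhole principle produces, for every $p_n$, infinitely many pairs $s\neq s'\in S$ with $s\equiv s'\pmod{p_n}$, and for an arbitrary $f$ (even injective on $S$), the progression through $s$ would then have to take the incompatible values $f(s)$ and $f(s')$. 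In short, one cannot embed an arbitrary function on $S$ into a compatible nested periodic structure; the hypothesis $d^*(S)=0$ controls density, but it does not prevent elements of $S$ from colliding modulo your periods. A secondary issue is that requiring only ``$\phi_n$ differs from $\phi_{n-1}$ on a set of small Banach density'' does not guarantee pointwise convergence of the $\phi_n$: a given position can change value infinitely often while the density of changes still sums. The paper circumvents all of this by \emph{not} trying to make $x_u$ itself a nice structured sequence. Instead, it first constructs (independently of $u$) a minimal, uniquely ergodic, zero-entropy subshift $X\subset K^{\N_0}$ by a block-concatenation hierarchy, where each block $X_{k+1}$ is built from $X_k$-blocks subject to two constraints: the distinguished word $w_k$ must appear with density close to $1$ (giving regularity, hence unique ergodicity and zero entropy), while every element of a finite $2^{-k}$-spanning set $T_k\subset X_k$ must appear at least once (giving enough diversity so that, exploiting $d^*(S)=0$, a $*$-filling procedure can produce a point $x_u\in X$ matching $u$ on $S$). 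Since $X$ is minimal, any $x_u\in X$ is automatically transitive, and then $F(x)=x(0)$ finishes the argument. The essential insight you are missing is this decoupling: the strictly ergodic zero-entropy system is fixed in advance with enough flexibility built in, and the interpolating point is then located inside it, rather than trying to engineer a single self-similar sequence $\phi$ that simultaneously carries the arbitrary data $f$ and has a rigid Toeplitz structure --- two demands that genuinely conflict.
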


As we have mentioned before, by combining Hahn-Katznelson's \cite{Hahn-Katznelson1967} and Weiss's results \cite[Theorem 8.1]{Weiss-SingleOrbitDynamics}, there is a set $S$ of positive density such that $S$ is weak interpolation of order $2$ for the class of strictly ergodic systems. This fact and \cref{prop:uniquely_ergodic_one} imply two things: First, for the classes of uniquely ergodic systems and strictly ergodic systems, weak interpolation of order $2$ and weak interpolation of all orders are not equivalent. 
Second, it leads to the question: Can a weak interpolation set of order $2$ for uniquely ergodic systems be syndetic? At the moment, we do not know the answer to this question (see \cref{Q2synd-2}). However, the answer is negative if we impose an additional condition on the growth of the corresponding complexity function:
\begin{Proposition}\label{prop:weak_interpolation_uniquely_ergodic}
    If $S \subset \mathbb{N}$ is syndetic and the word complexity of the
    sequence $1_S$ grows subexponentially (i.e. the orbit closure of $1_S$ under the left shift has zero entropy), then $S$ is not a weak interpolation set of order $2$ for uniquely ergodic systems.
\end{Proposition}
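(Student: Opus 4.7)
The plan is a proof by contradiction that exploits the tension between unique ergodicity (which makes Birkhoff averages along the orbit converge) and the freedom allowed by weak interpolation of order $2$. I will build an $f:S\to\{0,1\}$ whose density along $S$ fails to converge, and then argue that the zero-topological-entropy of $X_S:=\overline{\{\sigma^n\mathbf{1}_S:n\geq 0\}}$ nevertheless forces this density to converge.

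Enumerate $S=\{s_1<s_2<\cdots\}$; syndeticity gives $s_{k+1}-s_k\leq L$ for some $L$, and in particular $|S\cap[0,N]|\geq N/L$. Define
\[
f(s_k)=\begin{cases}1 & \text{if } k\in\bigcup_{i\geq 0}[2^{2i},2^{2i+1}),\\ 0 & \text{otherwise,}\end{cases}
\]
so that $\alpha_N:=|\{k:s_k\leq N,\,f(s_k)=1\}|/|\{k:s_k\leq N\}|$ satisfies $\liminf_N\alpha_N=0$ and $\limsup_N\alpha_N=1$. If $S$ were a weak interpolation set of order $2$ for uniquely ergodic systems, we would get a uniquely ergodic $(X,T,\mu)$, a transitive point $x\in X$, and $F\in C(X)$ with values in $[0,1]$ such that $F(T^n x)=f(n)$ for every $n\in S$.

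Form the orbit closure $Z:=\overline{\{(T^n x,\sigma^n\mathbf{1}_S):n\geq 0\}}\subset X\times X_S$ under $T\times\sigma$. Both coordinate projections of $Z$ are surjective. Any $(T\times\sigma)$-invariant probability measure $\lambda$ on $Z$ has $X$-marginal equal to $\mu$ (by unique ergodicity) and $X_S$-marginal some $\nu$ with $h_\nu(\sigma)=0$ (since $h_{\mathrm{top}}(X_S)=0$). The Abramov--Rokhlin identity for the factor $Z\to X_S$, combined with $h_\lambda(T\times\sigma)\leq h_\mu(T)+h_\nu(\sigma)=h_\mu(T)$ and $h_\lambda(T\times\sigma)\geq h_\mu(T)$ (since $X$ is a factor of $Z$), yields the relative-entropy identity $h_\lambda(T\times\sigma\mid \pi_X^{-1}\mathcal{B}_X)=0$: the $X_S$-coordinate is $\lambda$-a.s.\ a measurable function of the $X$-coordinate. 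Now apply the continuous functions $G(y,z):=F(y)z(0)$ and $H(y,z):=z(0)$ on $Z$; along any subsequence $(N_j)$ along which the empirical measures $\frac{1}{N_j}\sum_{n=0}^{N_j-1}\delta_{(T^n x,\sigma^n\mathbf{1}_S)}$ converge weakly to some $\lambda$, one has
\[
\frac{|S\cap[0,N_j-1]|}{N_j}\cdot\alpha_{N_j}=\frac{1}{N_j}\sum_{n=0}^{N_j-1}G\big((T\times\sigma)^n(x,\mathbf{1}_S)\big)\longrightarrow\int G\,d\lambda,
\]
and similarly $|S\cap[0,N_j-1]|/N_j\to\int H\,d\lambda$, so that $\alpha_{N_j}\to\int G\,d\lambda/\int H\,d\lambda$.

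The \textbf{main obstacle} is to show that this limiting ratio is \emph{independent} of the subsequence, i.e.\ of the particular invariant $\lambda$ on $Z$ obtained; this would force $\alpha_N$ to have a single limit in $[0,1]$, contradicting the construction of $f$. This is the heart of the argument and is precisely where the zero-entropy of $X_S$ must be exploited. The relative-entropy-zero conclusion identifies each $\lambda$ with a measurable factor map $\psi:X\to X_S$ intertwining $T$ and $\sigma$, so that $\int G\,d\lambda=\int F(y)(\psi(y))(0)\,d\mu(y)$ and $\int H\,d\lambda=\int (\psi(y))(0)\,d\mu(y)$; the delicate remaining task is to argue that, regardless of which admissible $\psi$ arises, the ratio above takes one and the same value (for example, via a disjointness argument showing that the conditional expectation $\mathbb{E}_\mu(F\mid \sigma(\psi))$ is constant, or by a Pinsker-type structural theorem selecting a canonical joining associated to the specific transitive point $(x,\mathbf{1}_S)$). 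Once this rigidity is secured, $\alpha_N$ converges and we obtain the desired contradiction.
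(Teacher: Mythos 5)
Your proposal is fundamentally different from the paper's argument, and unfortunately it contains a genuine error in addition to the gap you acknowledge yourself, so it does not constitute a proof.

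The unacknowledged error is the step ``$h_\lambda(T\times\sigma\mid \pi_X^{-1}\mathcal{B}_X)=0$: the $X_S$-coordinate is $\lambda$-a.s.\ a measurable function of the $X$-coordinate.'' Zero relative entropy of the extension $Z\to X$ does \emph{not} mean that $Z=X$ modulo $\lambda$: it only says the extension is a zero-entropy extension. Zero-entropy extensions are in general highly non-trivial --- any isometric extension (e.g.\ a circle skew-product over a circle rotation) has zero relative entropy, yet the fiber coordinate is certainly not a function of the base. The deduction you make at this point simply does not follow, and with it the proposed identification $\lambda\leftrightarrow\psi$ collapses. On top of that, as you note in the paragraph beginning ``The \textbf{main obstacle} is\ldots,'' the remaining rigidity claim --- that the ratio $\int G\,d\lambda / \int H\,d\lambda$ is the same for every $(T\times\sigma)$-invariant $\lambda$ arising as a subsequential limit --- is left entirely open. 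It is not clear this is even true at the level of generality considered: $(X_S,\sigma)$ need not be uniquely ergodic, so different subsequential empirical measures can have genuinely different $X_S$-marginals, and nothing in the entropy bookkeeping pins down the joining.

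The paper's proof goes in a completely different and much more concrete direction. It never passes to a joining or any relative-entropy identity. Instead it uses the subexponential word complexity $p(n)$ of $1_S$ directly: for each $n$ there is a fixed configuration $J=\{j_1<\dots<j_{|J|}\}\subset[0,n)$ of $S$-positions, with $|J|\gtrsim n/G$ (by syndeticity), that recurs in $S$ with upper Banach density at least $1/p(n)$. One thins this set of occurrences to $B$ with gaps $\geq n$ (still positive upper density), splits $B$ into $2^{|J|}$ pieces $B_i$ of positive upper density, and plants the $i$-th binary word $w_i\in\{0,1\}^{|J|}$ on the copy of $J$ based at each $m\in B_i$. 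If $(X,T)$ were uniquely ergodic with $f(T^sx)=y(s)$ on $S$, the sets $C_i=\bigcap_{k}\sigma^{-j_k}f^{-1}\{w_i(k)\}$ would be disjoint and, by unique ergodicity, each of measure $\geq (np(n))^{-1}$, giving $2^{|J|}(np(n))^{-1}\leq 1$. Since $|J|$ grows linearly in $n$ while $p(n)$ is subexponential, this is a contradiction. The key idea you are missing is precisely this combinatorial use of the complexity function to produce exponentially many disjoint cylinder sets that unique ergodicity forces to have uniformly positive measure; your entropy/joining machinery does not extract this quantitative information.
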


\medskip

\noindent \textbf{Outline of paper.}
\cref{sec_background} contains some background and general discussion. Sections \ref{sec_mixing}-\ref{sec:strictly_ergodic} concern, respectively, totally transitive/weak mixing/mixing systems, systems with bounded entropy, and strictly ergodic systems of zero entropy. In \cref{sec:distal} we prove some results about distal systems, including an answer to a question of Host, Kra, and Maass 
\cite{Host-Kra-Maass-2016} on the connection between sets of pointwise recurrence for distal systems and $IP$-sets.
Finally, \cref{sec_Qs} contains some open questions that naturally arise from our study.

\section{Background}
\label{sec_background}

\subsection{Some families of subsets of integers}
\begin{Definition}\label{synd}
Let $S$ be a nonempty subset of $\N$.
\begin{itemize}
    \item[$\bullet$] $S$ is \emph{syndetic} if the gaps between consecutive elements of $S$ are bounded. Equivalently, $S$ is syndetic if there exists $k \in \N$ such that $S \cup (S - 1) \cup \ldots \cup (S - k) \supset \N$.

    \item[$\bullet$] $S$ is \emph{thick} if it contains arbitrary long intervals of the form $\{m, m+1, \ldots, n\}$.

    \item[$\bullet$] $S$ is \emph{piecewise syndetic} if it is the intersection of a syndetic set and a thick set.

    \item[$\bullet$] $S$ is \emph{thickly syndetic} if for every $n\in\N$, there exists a syndetic set $E$ such that $\bigcup_{m \in E} \{m, m + 1, \ldots,  m + n\} \subset S$. 

    The classes of thick sets and syndetic sets are dual in the sense that a set $S$ is syndetic if and only $S$ intersects every thick set.
    Similarly, the classes of thickly syndetic and piecewise syndetic sets are dual in the same sense.
\end{itemize}
\end{Definition}

\begin{Definition}

The \emph{upper Banach density of $S \subset \N$} is
\[
    d^*(S) = \sup_{(F_N)_{N \in \N}} \limsup_{N \to \infty} \frac{|S \cap F_N|}{|F_N|}
\]
where the supremum is taken over all sequences of intervals $(F_N)_{N \in \N}$ in $\N$ satisfying $|F_N| \to \infty$ as $N \to \infty$. An equivalent definition of the upper Banach density of $S$ is
    \[
    d^*(S) = \lim_{n \to \infty} \sup_{m \in \N} \frac{|S \cap \{m, \ldots, m+n-1\}|}{n}.
    \]   
If $d^*(S) = 0$, then $\limsup_{N \to \infty} \frac{|S \cap F_N|}{|F_N|} = 0$ for any sequence $(F_N)_{N \in \N}$ of intervals in $\N$ whose lengths tend to infinity. In this case, we say $S$ has \emph{zero Banach density}.
\end{Definition}

\subsection{Some classes of dynamical systems}

\begin{Definition}
A \emph{topological dynamical system} (or a \emph{system} for short) is a pair $(X,T),$ where $X$ is a compact metric space 
and $T:X\to X$ is a continuous map.

A point $x\in X$ is \emph{transitive} if its forward orbit $o_T(x):=\{T^n x:\; n \in \N_0\}$ is dense in $X$.
\end{Definition}

\begin{Definition} \label{def:transitive-mixing-etc}

A system $(X,T)$ is
\begin{itemize}
    \item[$\bullet$] \emph{transitive} 
    if it contains a transitive 
    point, 

    \medskip

    \item[$\bullet$] \emph{totally transitive} if $(X,T^k)$ is transitive for every $k\in \N$,  

    \item[$\bullet$] (topological) \emph{weakly mixing} if for any open $U, V\subset X$, the set $\{n\in\N:\; T^{-n}U\cap V\neq \varnothing\}$ is thick, 

    \item[$\bullet$] (topological) \emph{mixing} if for any open $U, V\subset X$, the set $\{n\in\N:\; T^{-n}U\cap V\neq \varnothing\}$ is co-finite,

    \item[$\bullet$] \emph{uniquely ergodic} if there exists a unique Borel probability measure $\mu$ on $X$ satisfying $\mu(A) = \mu(T^{-1}A)$ for every Borel set $A \subset X$,

\item[$\bullet$] \emph{totally uniquely ergodic} if $(X, T^k)$ is uniquely ergodic for every $k\in\N$.
\end{itemize}
\end{Definition}

\begin{Remark}
    There are several other notions of transitivity in topological dynamics. Among them, the most commonly used is probably the following: A system $(X, T)$ is \emph{set transitive} if for any open $U, V$, there exists $n \in \N$ such that $U \cap T^{-n} V \neq \varnothing$; and $(X, T)$ is \emph{set totally transitive} if $(X, T^k)$ is set transitive for all $k \in \N$. 
    
    When $X$ is a compact metric space, set transitivity implies transitivity -- the notion we introduce in \cref{def:transitive-mixing-etc}. Likewise, set total transitivity implies total transitivity. 
    The converse implications of both statement also hold if $X$ has no isolated point. (See \cite{Akin-Carlson-topological-transitivity} for a discussion on various notions of transitivity and their relations.)
\end{Remark}






\begin{Definition}\label{def:entropy}
Let $(X,T)$ be a system and let $\rho$ be a metric on $X$. Let $\varepsilon>0$ and denote by $c(n,\varepsilon,T)$ the smallest cardinality of a cover of $X$ by sets with $\rho_n$-diameter less than $\varepsilon,$ where $\rho_n(x,y)=\max_{0\leq k\leq n}\rho(T^k x, T^k y)$. 
 The \emph{topological entropy} of the system is the (finite) number
\begin{equation*}
h(T)= \lim_{\varepsilon\to 0^+}\lim_{n\to\infty}\frac{\log (c(n,\varepsilon,T))}{n}.
\end{equation*}
\end{Definition}

\begin{Definition}
    Let $\mathcal{A}$ be a compact set and define $\Omega_{\mathcal{A}} = \mathcal{A}^{\N_0}$ endowed with the product topology. Every point $x \in \Omega_{\mathcal{A}}$ is a sequence $(x(n))_{n \in \N_0}$ with $x(n) \in \mathcal{A}$. Define the \emph{left shift} $\sigma: \Omega_{\mathcal{A}} \to \Omega_{\mathcal{A}}$ by $\sigma((x(n))_{n \in \N_0}) = (x(n+1))_{n \in \N_0}$. 
    
    The dynamical system $(\Omega_{\mathcal{A}}, \sigma)$ is mixing and is called \emph{the full shift on $\mathcal{A}$}. A \emph{subshift on $\mathcal{A}$} is a pair $(X, \sigma)$ where $X$ is a closed subset of $X$ satisfying $\sigma(X) \subset X$. 
\end{Definition}

\subsection{Equivalence of strong interpolation and weak interpolation for some classes of systems}
\label{sec:equivalence}

In this section, we show the equivalence of various notions of interpolation sets for certain classes of systems. 
In the special case where $\mathcal{C}$ is the class of compact abelian group rotations, this result was proved in \cite[Theorem 1]{Hartman_Ryll-Nardzewski_1964}. 
An analogous result for nilsystems was stated in \cite[Theorem 2.2]{Le_sublac} without proof. 
Our proof here is similar to the one in \cite[Theorem 1]{Hartman_Ryll-Nardzewski_1964}.

\begin{Proposition}
If $\mathcal{C}$ is a class of dynamical systems satisfying
\begin{enumerate}
    \item[(1)] If $(X, T) \in \mathcal{C}$ and $(Y, T)$ is a subsystem of $(X, T)$, then $(Y, T) \in \mathcal{C}$ and
    \item[(2)]\label{item_somelabelname} If $(X, T),(Y, R) \in \mathcal{C}$, then $(X \times Y, T \times R) \in \mathcal{C}$,
\end{enumerate}
then weak interpolation of order $2$ for $\mathcal{C}$ is equivalent to weak interpolation of all orders for $\mathcal{C}$. If in addition $\mathcal{C}$ satisfies
\begin{enumerate}
\item[(2')] If $(X_n, T_n) \in \mathcal{C}$ for $n \in \mathbb{N}$, then $(\prod_n X_n, \prod_n T_n) \in \mathcal{C}$,
\end{enumerate}
then weak interpolation of order $2$ for $\mathcal{C}$ is equivalent to strong interpolation for $\mathcal{C}$.
\end{Proposition}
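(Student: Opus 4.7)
The plan is to bootstrap weak interpolation of order $2$ first to weak interpolation of all orders using only hypotheses (1) and (2), and then further to strong interpolation using (1) and (2'). The reverse implications (strong $\Rightarrow$ all orders $\Rightarrow$ order $2$) are immediate from the definitions and are already noted in the paper, so only the forward directions require argument.

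For the first equivalence, given $f:S\to\{0,\ldots,k-1\}$, I would decompose it into its $k$ level indicators $f_j(n):=1$ if $f(n)=j$ and $0$ otherwise, for $j=0,\ldots,k-1$. Each $f_j$ is $\{0,1\}$-valued on $S$, so weak interpolation of order $2$ supplies systems $(X_j,T_j)\in\mathcal{C}$, transitive points $x_j\in X_j$, and $F_j\in C(X_j)$ with $F_j(T_j^n x_j)=f_j(n)$ for $n\in S$. By (2) the finite product $(X_0\times\cdots\times X_{k-1},\,T_0\times\cdots\times T_{k-1})$ lies in $\mathcal{C}$, and by (1) the orbit closure $Y$ of $x:=(x_0,\ldots,x_{k-1})$ under the product map is also in $\mathcal{C}$, with $x$ transitive in $Y$. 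The continuous function $F(y_0,\ldots,y_{k-1}):=\sum_{j=0}^{k-1}j\,F_j(y_j)$ then satisfies $F(T^n x)=f(n)$ for every $n\in S$, since for each such $n$ exactly one $f_j(n)$ equals $1$.

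For the second equivalence, fix a bounded $f:S\to\C$; after normalization assume $\|f\|_\infty\le 1$. Choose step approximations $f_k:S\to D_k$ (with $D_k\subset\C$ finite) such that $\|f-f_k\|_\infty\le 2^{-k}$, set $f_0\equiv 0$, and let $g_k:=f_k-f_{k-1}$. Each $g_k$ takes finitely many values, $\|g_k\|_\infty\le 3\cdot 2^{-k}$, and $\sum_k g_k=f$ pointwise on $S$. Applying the first equivalence to each $g_k$ yields $(X_k,T_k)\in\mathcal{C}$, a transitive $x_k\in X_k$, and $G_k\in C(X_k)$ with $G_k(T_k^n x_k)=g_k(n)$ for $n\in S$. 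Hypothesis (2') then places $(\prod_k X_k,\,\prod_k T_k)$ in $\mathcal{C}$, and (1) lets us restrict to the orbit closure $Y$ of $x:=(x_k)_k$, which remains in $\mathcal{C}$ with $x$ transitive.

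The main obstacle is that $G_k$ is only controlled by $g_k$ at orbit points $T_k^n x_k$ with $n\in S$; elsewhere on $X_k$ it can be arbitrarily large, so the naive series $\sum_k G_k(y_k)$ need not converge. I would resolve this by composing $G_k$ with a continuous radial retraction $r_k:\C\to\{z\in\C:\,|z|\le 3\cdot 2^{-k}\}$, for example $r_k(z):=z\cdot\min(1,\,3\cdot 2^{-k}/|z|)$ (with $r_k(0):=0$), and replacing $G_k$ with $r_k\circ G_k$. Since $|g_k(n)|\le 3\cdot 2^{-k}$ for $n\in S$, this modification leaves $G_k(T_k^n x_k)=g_k(n)$ unchanged for those $n$ while forcing $\|G_k\|_\infty\le 3\cdot 2^{-k}$ on all of $X_k$. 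Then $F(y):=\sum_k G_k(y_k)$ converges uniformly on $\prod_k X_k$ by Weierstrass $M$-test, so $F$ is continuous and restricts to a continuous function on $Y$, and $F(T^n x)=\sum_k g_k(n)=f(n)$ for every $n\in S$, as required.
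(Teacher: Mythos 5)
Your proof is correct and structurally parallel to the paper's argument: decompose $f$ into finitely-valued summands, interpolate each via weak interpolation of order $2$, pass to a (finite or countable) product system using (2) or (2'), restrict to the orbit closure of the diagonal point using (1), then sum the interpolating functions into a single continuous $F$. You choose a slightly different decomposition in the first half---level-set indicators $f_j = 1_{\{f=j\}}$ rather than the paper's binary-digit decomposition $f = \sum_k f_k$---but that difference is immaterial; both reduce weak interpolation of higher order to weak interpolation of order $2$ in the same way. Where your write-up is genuinely sharper is the explicit radial retraction $r_k$. The definition of weak interpolation controls $G_k$ only at the orbit points $T_k^n x_k$ with $n\in S$, so $\|G_k\|_\infty$ over $X_k$ is a priori unbounded and the naive series $\sum_k G_k(y_k)$ need not converge. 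The paper sidesteps this by simply asserting that each interpolating function can be taken with range in $[0,1]$; composing with your retraction $r_k$ is precisely the device needed to justify such a normalization, after which the Weierstrass $M$-test gives uniform convergence and continuity of $F$ on the product. (A minor cosmetic remark: since $g_k$ takes values in a finite subset of $\C$ rather than in $\{0,\dots,m-1\}$, formally one should relabel its value set and post-compose the resulting interpolating function with a continuous map sending the labels back to the complex values; this is standard and does not affect the argument.)
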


\begin{Remark}
We note that (1) and (2) hold for the classes of expansive systems and systems of finite entropy, and that (1) and (2') hold for the classes of systems of zero entropy, compact abelian group rotations, inverse limits of nilsystems, and distal systems.  

On the other hand, (1) fails for several classes of systems, eg., the class of systems with positive entropy, the class of transitive systems and the class of (weak) mixing system. 
Condition (2) fails for the class of minimal systems and the class of uniquely ergodic systems.
\end{Remark}

\begin{proof}
Assume that $\mathcal{C}$ satisfies (1) and (2). Since weak interpolation of order $k$ implies weak interpolation of all smaller orders, it suffices to show that weak interpolation of order $2$ implies weak interpolation of order $2^m$ for all $m$. 
Suppose that $S\subset\N$ is a weak interpolation set of order $2$ for $\mathcal{C}$ and let $f: S \to \{0,1, \ldots, 2^m-1\}$ be arbitrary. 
We can decompose $f = \sum_{k=0}^{m-1} f_k$ with $f_k: S \to \{0, 2^k\}$ by binary expansion. Then, for each $k$, since $S$ is weak interpolation of order $2$, there exists $(X_k, T_k) \in \mathcal{C}$, a continuous function $F_k: X_k \to [0, 1]$ and a point $x_k \in X_k$ such that 
\[
    F_k(T_k^n x_k) = f_k(n) \text{ for all } n \in S.
\]
Define the orbit closure $Y$ of the point $x := (x_k)_{0 \leq k < m}$ in the product system 
$\prod_{k = 0}^{m-1} (X_k, T_k)$. 
Denote $T = \prod_{k=0}^{m-1} T_k$. 
By (1) and (2), since $(Y, T)$ is a subsystem of a finite product of $(X_k, T_k)$, $(Y, T) \in \mathcal{C}$. 
Finally, define the continuous function 
$F \in C\left(\prod_{k=1}^{\infty} X_k\right)$ as $F(y_0, \ldots, y_{m-1}) = \sum_{k=0}^{m-1} 2^k F_k(y_k)$ and note that
$F(T^n x) = f(n)$ for $n \in S$.
This shows that $S$ is weak interpolation of order $2^m$ for $\mathcal{C}$, and since $m$ was arbitrary, of all orders. 

The proof of the second part is similar. 
Again, we need only prove that if $\mathcal{C}$ satisfies (1) and (2') and $S\subset\N$ is weak interpolation of order $2$ for $\mathcal{C}$, then it is interpolation for $\mathcal{C}$. 
Consider any such $S$ and any bounded function $f: S \to \mathbb{C}$. 
By the previous part of the proof, we know that $S$ is weak interpolation of all orders.
Letting $C := \|f\|_{\infty}$, we can write $f = C \left(\sum_{k=1}^{\infty} f_k + ig_k\right)$ with $f_k, g_k: S \to \{0, \pm 2^{-k}\}$. 
Then we proceed exactly as above, defining $(X_k, T_k), (Y_k, S_k) \in \mathcal{C}$, $x_k \in X_k$, $y_k \in Y_k$, $F_k \in C(X_k)$, and $G_k \in C(Y_k)$ so that
\[
    F_k(T_k^n x_k) = f_k(n), G_k(S_k^n y_k) = g_k(n) \text{ for all } n \in S.
\]
Now we again take the orbit closure of the sequence $((x_k, y_k))_{k \in \mathbb{N}}$ in the infinite product system, which is in $\mathcal{C}$ by (1) and (2'). The function $F: \prod_{k=1}^{\infty} X_k \to \mathbb{C}$ defined by 
$F((x_k, y_k))_{k \in \N}) = C \sum_{k=1}^{\infty} (F_k(x_k) + iG_k(y_k))$ is a uniform limit of continuous functions, and therefore continuous. Just as before, $F(T^n x) = f(n)$ for all $n \in S$, verifying that $S$ is an interpolation set for $\mathcal{C}$.
\end{proof}

\section{Totally transitive, weak mixing, and strong mixing systems}\label{sec_mixing}

In this section we prove \cref{thm:total_transitive_weak_mixing_mixing}.
We start by establishing some properties of totally transitive points.

\begin{Definition}
Given a totally transitive system $(X,T)$, a point $x\in X$ is a \emph{totally transitive point} if $\{T^{kn} x: n \in \N_0\}$ is dense in $X$ for every $k \in \N$.
\end{Definition}

A transitive system always has a transitive point, but it is not immediately obvious that every totally transitive system has a totally transitive point. 
Nevertheless, the next lemma shows that this is indeed the case.

\begin{Lemma}\label{prop:dichotomy}
Let $(X,T)$ be totally transitive. Then one of the following two mutually exclusive statements is true.
\begin{enumerate}
\item \label{item:transitive-surjective} $T: X \to X$ is surjective and there is a residual set of totally transitive points.
\item \label{item:transitive-isolated-point}
There exists exactly one transitive point $x\in X$, which is an isolated point of $X$. 
In this case $x$ is a totally transitive point, $(X\setminus\{x\},T)$ is a totally transitive subsystem of $(X,T)$, and $T:X\setminus\{x\}\to X\setminus\{x\}$ is surjective. 
\end{enumerate}
\end{Lemma}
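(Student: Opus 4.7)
The approach is to bootstrap from the classical dichotomy for merely transitive systems to the totally transitive setting. I plan to organize the argument in three stages.

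\textbf{Stage 1 (baseline dichotomy for transitive $(X,T)$).} I would first establish: either $T$ is surjective and the set of transitive points is a dense $G_{\delta}$, or $T$ is not surjective and $X\setminus T(X)=\{x\}$ for a single point $x$, which is isolated and the unique transitive point. The non-surjective direction follows because for any transitive $y$ we have $o_T(y)\subseteq\{y\}\cup T(X)$, so the closed set $\{y\}\cup T(X)$ contains $\overline{o_T(y)}=X$, pinning $X\setminus T(X)$ as the singleton $\{y\}$; this also forces uniqueness, since a second transitive point would need $x$ in its orbit closure, yet $x\notin T(X)$ leaves $x$ with no preimage. For the surjective direction one observes that $T(\overline{o_T(y)})=X$ gives that $T^n y$ is transitive for every $n$, upgrading transitivity to set-transitivity; a standard Baire category argument applied to $\bigcap_n\bigcup_{k\geq 0}T^{-k}(U_n)$ for a countable base $(U_n)$ then yields the dense $G_{\delta}$.

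\textbf{Stage 2 (pass to totally transitive).} Apply Stage 1 to each $(X,T^k)$. If $T$ is surjective then every $T^k$ is, and a countable intersection of dense $G_{\delta}$ sets produces the residual set of totally transitive points demanded by case (\ref{item:transitive-surjective}). Otherwise the chain $T^k(X)\subseteq T(X)\subsetneq X$ forces every $T^k$ to be non-surjective, so Stage 1 delivers a unique isolated transitive point $x_k$ of $(X,T^k)$ with $\{x_k\}=X\setminus T^k(X)$; the same chain of inclusions gives $x_1\in\{x_k\}$, so all the $x_k$ coincide with $x:=x_1$, which is thereby a totally transitive point.

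\textbf{Stage 3 (the subsystem on $X\setminus\{x\}$).} Since $x\notin T(X)$, the map $T$ sends $X\setminus\{x\}$ into itself, and $X\setminus\{x\}$ is clopen and hence compact, so this is a genuine subsystem. Total transitivity is inherited: for each $k$, $\{T^{jk}x:j\geq 1\}\subseteq X\setminus\{x\}$ is dense in $X$ (and hence in $X\setminus\{x\}$), so $T^k x$ is a transitive point of $(X\setminus\{x\},T^k)$. The step I expect to require the most care — the main obstacle — is the surjectivity of $T|_{X\setminus\{x\}}$. I would argue by contradiction: if $Tx$ had no preimage other than $x$, then $T^2(X)=T(X\setminus\{x\})=X\setminus\{x,Tx\}$, and since $Tx\neq x$ (otherwise $X=\{x\}$, contradicting non-surjectivity of $T$) we get $|X\setminus T^2(X)|\geq 2$. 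But Stage 1 applied to the transitive system $(X,T^2)$ forces $|X\setminus T^2(X)|\leq 1$, a contradiction. Mutual exclusivity of the two cases is then immediate, since $T$ is surjective in one and not in the other.
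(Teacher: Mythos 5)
Your argument is correct, and the overall skeleton (split on whether $T$ is surjective; in the non-surjective case, show $X\setminus TX$ is a singleton, that this point is isolated, unique, and totally transitive, and that the remaining set is a surjective totally transitive subsystem) matches the paper's. Where you genuinely diverge is in the final surjectivity step. The paper shows $T(X\setminus\{x\})=X\setminus\{x\}$ directly: it observes that for $k\geq 2$ the set $\{T^{kn}x:n\geq 1\}$ is contained in $T(X\setminus\{x\})$ and is dense in $X\setminus\{x\}$, so the closed set $T(X\setminus\{x\})$ must be all of $X\setminus\{x\}$. Your argument instead reuses the baseline (single-map) dichotomy one more time, applied to $(X,T^2)$: assuming non-surjectivity of the restriction would force $|X\setminus T^2(X)|\geq 2$, contradicting the fact that for any transitive system the complement of the image has at most one point. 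This is a pleasingly self-contained route — it only uses transitivity of $(X,T^2)$, not the total transitivity of the subsystem — whereas the paper's density argument is shorter once the subsystem's total transitivity has already been established. Your ``Stage 1, then apply to every $T^k$'' packaging is also a cleaner way to extract the unique isolated point $x$ as simultaneously equal to every $x_k$, rather than re-deriving its $T^k$-transitivity from uniqueness.

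One small slip in Stage 3: you write that $\{T^{jk}x:j\geq 1\}\subseteq X\setminus\{x\}$ ``is dense in $X$.'' That cannot be, since $x$ is isolated and this set avoids $x$. The correct statement, which is all you need, is that $\{T^{jk}x:j\geq 0\}$ is dense in $X$, and since $x$ is isolated, deleting $x$ from this set leaves something dense in $X\setminus\{x\}$. (You should also note, as you essentially do implicitly when ruling out $Tx=x$, that $T^{jk}x\neq x$ for all $j\geq 1$ because $x\notin T(X)$.) With that one-line repair, everything goes through.
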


\begin{proof}


Suppose $T$ is surjective, i.e. $T X = X$. 
If $x$ is a transitive point of $(X,T)$, then 
\[
    \overline{\{T^{n} (Tx): n \in \N_0\}} = T \overline{\{T^{n} x: n \in \N_0\}} = TX = X,
\]
and so $Tx$ is also a transitive point.
In particular, whenever $(X,T)$ is transitive and $T$ is surjective, there exists a dense set of transitive points.
Surjectivity of $T$ implies surjectivity of $T^k$, so the system $(X,T^k)$ contains a dense set of transitive points.

If $x\in X$ is transitive for $T^k$, then for every non-empty open set $U\subset X$ there exists some $n\in\N_0$ with $x\in T^{-nk}U$, so the union $\bigcup_{n\in\N_0}T^{-nk}U$ contains every transitive point and hence is an open dense set.
Taking now a countable basis $(U_\ell)_{\ell\in\N}$ for $X$, it follows that the intersection ${\mathcal T}:=\bigcap_{\ell,k\in\N}\bigcup_{n\in\N_0} T^{-kn}U_\ell$ is a countable intersection of dense open sets and hence is residual.
To finish the proof, notice that any point in ${\mathcal T}$ is totally transitive, since for every $k\in\N$ and every non-empty open set $U\subset X$ there exists $\ell\in\N$ with $U_\ell\subset U$ and hence there exists some $n\in\N_0$ such that $T^{nk}x\in U_\ell\subset U$.

(\ref{item:transitive-isolated-point}) If $(X,T)$ is a transitive system that is not surjective, then we claim that $U:=X\setminus TX$ has exactly one element.
Indeed, $TX$ is compact, hence $U$ is open. 
By transitivity, there exists a point $x\in X$ with a dense orbit, so $T^nx\in U$ for some $n\geq0$; the definition of $U$ forces $n=0$ and hence $x\in U$.
If there were some $y\in U$ with $y\neq x$ we could take $V\subset U$ to be a neighborhood of $y$ not containing $x$. 
The orbit of $x$ never enters $V$ which is a contradiction, and this proves the claim that $U=\{x\}$.


Let $X_0 = X \setminus \{x\} = T X$. Let $k \in \N$ and $y = T^k x$. 
Then $y \in X_0$. 
Since $x$ is a totally transitive point and $x$ is isolated, 
\[
    \overline{\{T^{kn} y: n \geq 0\}} = \overline{\{T^{kn} x: n \geq 1\}} = X_0,
\]
showing that $y$ is a transitive point for $(X_0,T^k)$, and in particular that $(X_0,T^k)$ is a transitive system. 
Since $k$ is arbitrary, $(X_0, T)$ is totally transitive. 

Since $X_0 = TX$, for $k \geq 2$, we have $\{T^{kn} x: n \geq 1\} \subset T X_0$. Because the set in the left hand side is dense in $X_0$, $T X_0$ is dense in $X_0$. As both $T X_0$ and $X_0$ are closed, we conclude that $T X_0 = X_0$, i.e. $T$ is surjective on $X_0$.
\end{proof}



\begin{lemma}
\label{lemma_totaltransitivity_new}
If $(X,T)$ is totally transitive then every transitive point of $(X, T)$ is a totally transitive point.
\end{lemma}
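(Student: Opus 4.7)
The plan is to use the dichotomy established in \cref{prop:dichotomy}. If we are in case (\ref{item:transitive-isolated-point}) of that lemma, then $(X,T)$ has a \emph{unique} transitive point, which is already asserted to be totally transitive, so there is nothing to prove. So the real work is in case (\ref{item:transitive-surjective}), where $T:X\to X$ is surjective and we must show that an arbitrary transitive point $x$ is transitive for $T^k$ for every $k\in\N$.

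Fix such an $x$ and $k\in\N$, and split the $T$-orbit of $x$ into $k$ arithmetic-progression subsequences: for $j=0,1,\dots,k-1$ set
\[
A_j \;=\; \overline{\{T^{kn+j}x : n\geq 0\}}.
\]
Since $o_T(x)=\bigcup_{j=0}^{k-1}\{T^{kn+j}x:n\geq 0\}$ is dense in $X$, we have $\bigcup_{j=0}^{k-1} A_j = X$. My first key observation will be that
\[
T A_j \;=\; A_{(j+1)\bmod k}\qquad\text{for all }j.
\]
The inclusion $TA_j\subseteq A_{(j+1)\bmod k}$ is just continuity of $T$, and the reverse inclusion follows because $TA_j$ is compact (hence closed) and contains the dense subset $\{T^{kn+j+1}x:n\geq 0\}$ of $A_{(j+1)\bmod k}$. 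In particular each $A_j$ is $T^k$-invariant (indeed $T^kA_j = A_j$).

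The second key step is to use total transitivity of $(X,T)$: by assumption $(X,T^k)$ is transitive, so it has some transitive point $z\in X$. Since $X=\bigcup_j A_j$, there is some index $j_0$ with $z\in A_{j_0}$, and because $A_{j_0}$ is closed and $T^k$-invariant it must contain $\overline{o_{T^k}(z)}=X$; thus $A_{j_0}=X$. Cycling the identity $T A_j=A_{(j+1)\bmod k}$ together with surjectivity of $T$ (from case (\ref{item:transitive-surjective}) of \cref{prop:dichotomy}) then propagates this equality around the cycle:
\[
A_{j_0+1\bmod k}\;=\;TA_{j_0}\;=\;TX\;=\;X,\qquad A_{j_0+2\bmod k}\;=\;TA_{j_0+1\bmod k}\;=\;X,\qquad\dots
\]
so every $A_j$, and in particular $A_0=\overline{o_{T^k}(x)}$, equals $X$. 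This shows $x$ is $T^k$-transitive, and since $k$ was arbitrary, $x$ is a totally transitive point.

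The main (and essentially only) obstacle is verifying the orbit-permutation identity $TA_j=A_{(j+1)\bmod k}$; the non-trivial direction relies crucially on surjectivity of $T$, which is exactly why the dichotomy lemma is invoked at the start. Everything else is a short Baire-category-free argument using only continuity and closedness.
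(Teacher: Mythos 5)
Your proof is correct and shares the paper's essential strategy: reduce to the surjective case via the dichotomy lemma, cover $X$ by finitely many closed, $T^k$-invariant sets built from the $T$-orbit of $x$, locate a $T^k$-transitive point inside one of them, and use surjectivity of $T$. The only structural difference is the choice of covering: you use the forward-shifted closures $A_j=\overline{\{T^{kn+j}x:n\geq 0\}}$ and then cycle around using surjectivity, whereas the paper covers $X$ by $Y=A_0$ together with its pullbacks $T^{-1}Y,\dots,T^{-(k-1)}Y$, notes the $T^k$-transitive point $y$ has some image $T^jy\in Y$, and uses surjectivity once to see that $T^jy$ is still $T^k$-transitive, which forces $Y=X$ directly. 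One minor overclaim worth flagging: the identity $TA_j=A_{(j+1)\bmod k}$ need not hold a priori when $j=k-1$, since $TA_{k-1}=\overline{\{T^{km}x:m\geq 1\}}$ equals $A_0=\overline{\{T^{km}x:m\geq 0\}}$ only if $x$ is $T^k$-recurrent, which is not yet known at that point of the argument. This is harmless, because you only ever use the easy inclusion $TA_j\subseteq A_{(j+1)\bmod k}$ (hence $T^kA_j\subseteq A_j$) and surjectivity, which together give $A_{j_0+1\bmod k}\supseteq TA_{j_0}=TX=X$; but the claim should be stated as an inclusion rather than an equality.
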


\begin{proof}
If $T$ is not surjective, then by \cref{prop:dichotomy}, $(X, T)$ has exactly one transitive point. Since $(X, T)$ is totally transitive, this unique transitive point is also a totally transitive point. 

Next, assume that $T$ is surjective and let $x$ be a transitive point of $(X,T)$.
Let $k\geq 2$ and take $Y=\overline{\{T^{kn}x: n\in\N_0\}}$.
Since $(X,T^k)$ is transitive, there exists a point $y\in X$ that is $T^k$-transitive. 
Since $x$ is transitive, it follows that $Y\cup T^{-1}Y\cup\ldots \cup T^{-(k-1)} Y =X$ and hence $T^j y\in Y$ for some $j\in\{0,1,\ldots,k-1\}$. 
Since $y$ is $T^k$-transitive and $T$ is surjective, $T^j y$ is also $T^k$-transitive. 
It follows that $Y$ contains a $T^k$-transitive point of $X$, and is invariant under $T^k$, so $Y=X$, proving that $x$ is $T^k$-transitive.
\end{proof}

The next lemma is the main technical step towards proving \cref{thm:total_transitive_weak_mixing_mixing}.

\begin{lemma}
\label{lem:syndetic_not_int2}
Let $S \subset \N$ be a syndetic set and let $k \in \N$ be such that $S \cup (S - 1) \cup \ldots \cup (S - (k-1)) \supset \N$. 
Let $h > k$ and $S_i := S \cap (h^2\N + [ih, (i+1)h))$ for $0 \leq i \leq h - 1$. For any totally transitive system $(X, T)$ and transitive point $x \in X$, there exists $0 \leq i < j \leq h - 1$ such that 
\[
    \overline{\{T^n x: n \in S_i\}} \cap \overline{\{T^n x: n \in S_j\}} \neq \varnothing.
\]
\end{lemma}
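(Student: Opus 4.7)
The plan is to adapt the measure-theoretic pigeonhole strategy sketched in the commented-out lemma, using an invariant Borel probability measure $\mu$ on $(X,T)$ (which exists by Krylov--Bogolyubov), while addressing the obstruction that $(X,T)$ might fall into case (\ref{item:transitive-isolated-point}) of \cref{prop:dichotomy}.

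Set $X_i := \overline{\{T^n x : n \in S_i\}}$ for $0 \leq i \leq h-1$. First I would check the set-theoretic inclusion
\[
\bigcup_{j=0}^{k-1}(S_i - j) \;\supset\; \Big(\bigcup_{j=0}^{k-1}(S - j)\Big) \cap (h^2\N + ih) \;\supset\; h^2\N + ih,
\]
where the first inclusion uses that $0 \le j \le k-1 < h$, so shifting by $j$ keeps us inside the interval $[ih,(i+1)h)$, and the second uses the hypothesis on $S$. Passing to orbit closures and using continuity of $T^j$, this yields
\[
\bigcup_{j=0}^{k-1} T^{-j} X_i \;\supset\; \overline{\{T^n x : n \in h^2\N + ih\}} \;=\; T^{ih}\,\overline{\{T^{h^2 n} x : n \in \N_0\}},
\]
and by \cref{lemma_totaltransitivity_new}, $x$ is totally transitive, so the inner closure equals $X$. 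Hence the left-hand side contains $T^{ih}X$.

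Now comes the main subtlety: if we are in case (\ref{item:transitive-surjective}) of \cref{prop:dichotomy}, then $T$ is surjective and so $T^{ih}X = X$, and the pigeonhole argument proceeds. If we are in case (\ref{item:transitive-isolated-point}), then $T^{ih}X$ equals $X \setminus \{x\}$ for $i \geq 1$. To handle this uniformly, I would observe that in case (\ref{item:transitive-isolated-point}) the isolated transitive point $x$ satisfies $T^{-1}\{x\} = \varnothing$ (since $x \notin TX$), and therefore any $T$-invariant probability measure $\mu$ assigns $\mu(\{x\}) = \mu(T^{-1}\{x\}) = 0$. Consequently $\mu(X \setminus \{x\}) = 1$, and in every case
\[
\mu\Big(\bigcup_{j=0}^{k-1} T^{-j} X_i\Big) = 1.
\]

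Finally, using $T$-invariance $\mu(T^{-j}X_i) = \mu(X_i)$ and a union bound, $k \mu(X_i) \geq 1$, so $\mu(X_i) \geq 1/k$ for each $i$. Since $h > k$, summing gives
\[
\sum_{i=0}^{h-1} \mu(X_i) \;\geq\; \frac{h}{k} \;>\; 1,
\]
so by pigeonhole some pair $X_i, X_j$ must satisfy $\mu(X_i \cap X_j) > 0$, and in particular $X_i \cap X_j \neq \varnothing$. The key obstacle throughout is the non-surjective case of \cref{prop:dichotomy}: the fix is the elementary observation that invariant measures cannot charge a point with empty preimage, which effectively lets us replace $X$ by $X \setminus \{x\}$ without losing any mass.
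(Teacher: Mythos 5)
Your proposal follows essentially the same route as the paper's proof: decompose $S$ into the $S_i$, show $\bigcup_{j<k}T^{-j}X_i$ covers (almost) all of $X$, and then pigeonhole with an invariant measure after noting that $\mu(\{x\})=0$ in the non-surjective case of \cref{prop:dichotomy}. One small slip: the displayed identity $\overline{\{T^n x : n \in h^2\N + ih\}} = T^{ih}\overline{\{T^{h^2 n} x : n \in \N_0\}}$ is off for $i=0$ in the non-surjective case (the left side equals $X\setminus\{x\}$, not $T^0 X = X$), but this is harmless since your measure-theoretic observation $\mu(X\setminus\{x\})=1$ already covers it.
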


\begin{proof}
Since $S_i$ is nonempty for all $i$, the conclusion is obviously true if $(X, T)$ is the trivial one-point system. Now suppose $(X, T)$ is a nontrivial, totally transitive system and let $x$ be a transitive point of $(X, T)$. 
By \cref{lemma_totaltransitivity_new}, the point $x$ is a totally transitive point. 
We then have that $T^{a \N + b}x$ is dense in $X \setminus \{x\}$ for any $a, b \in \N$ (where, for any set $S \subset \N_0$, $T^S x := \{T^n x: n \in S\}$); this can be seen by analysing the two cases arising in \cref{prop:dichotomy} individually.   



For $0 \leq i \leq h - 1$, let $X_i := \overline{T^{S_i} x}$. Note that for all $0 \leq i \leq h -1$ and $0 \leq j \leq k - 1$,
\[
    h^2 \N + [ih, (i+1)h) - j \supset h^2 \N + ih.
\]
Therefore, for $0 \leq i \leq h -1$ and $0 \leq j \leq k - 1$,
\[
    S_i - j = (S \cap (h^2 \N + [ih, (i+1)h))) - j \supset (S-j) \cap (h^2 \N + ih).
\]
And so,
\begin{align*}
    S_i \cup (S_i - 1) \cup \ldots \cup (S_i - (k-1)) &\supset (S \cup (S - 1) \cup \ldots \cup (S - (k-1))) \cap (h^2 \N + ih)\\ &= h^2 \N + ih.
\end{align*}
It follows that
\begin{equation}
\label{eq:X_i_align}
\begin{aligned}
    X_i \cup T^{-1} X_i \cup \ldots \cup T^{-(k-1)} X_i & \supset \overline{T^{S_i} x} \cup \overline{T^{S_i - 1} x} \cup \ldots \cup\overline{T^{S_i - (k-1)} x}\\
    &\supset \overline{T^{h^2 \N + ih} x}\\
    &\supset X \setminus \{x\}.
\end{aligned}
\end{equation}

Let $\mu$ be a $T$-invariant measure on $X$. 
Since $(X, T)$ is totally transitive and nontrivial, it is not a finite system, and since $x$ is a transitive point, it is not a periodic point. Therefore, $\mu(\{x\}) = 0$ and $\mu(X \setminus \{x\}) = 1$. Then \eqref{eq:X_i_align} implies $\mu(X_i) \geq 1/k$ for all $0 \leq i \leq h - 1$. Since $h > k$, there must be $0 \leq i < j \leq h - 1$ such that $X_i \cap X_j \neq \varnothing$ which is the desired conclusion.
\end{proof}

We are ready to prove \cref{thm:total_transitive_weak_mixing_mixing} whose statement is repeated here for convenience.

    
    
    


\begin{named}{\cref{thm:total_transitive_weak_mixing_mixing}}{}
Let $S \subset \N$. The following are equivalent:
\begin{enumerate}
    \item \label{item:equivalent_weak_totaltransitive_of_all_order} $S$ is a weak interpolation set of all orders for totally transitive systems.
    
    \item \label{item:equivalent_totaltransitive} $S$ is an interpolation set for totally transitive systems.
    
    \item \label{item:equivalent_weakmixing} $S$ is an interpolation set for weak mixing systems.
    
    \item \label{item:equivalent_mixing} $S$ is an interpolation set for strong mixing systems.

    \item \label{item:equivalent_not_synd} $S$ is not syndetic.

\end{enumerate}
\end{named}

\begin{proof}

We have the chain of implications among various classes of topological dynamical systems: Strong mixing $\Rightarrow$ Weak mixing $\Rightarrow$ Totally transitive. Therefore, the following directions are obvious: (\ref{item:equivalent_mixing}) $\Rightarrow$ (\ref{item:equivalent_weakmixing}) $\Rightarrow$ (\ref{item:equivalent_totaltransitive}) $\Rightarrow$ (\ref{item:equivalent_weak_totaltransitive_of_all_order}) and so it remains to prove (\ref{item:equivalent_weak_totaltransitive_of_all_order}) $\Rightarrow$ (\ref{item:equivalent_not_synd}) and (\ref{item:equivalent_not_synd}) $\Rightarrow$ (\ref{item:equivalent_mixing}).

(\ref{item:equivalent_weak_totaltransitive_of_all_order}) $\Rightarrow$ (\ref{item:equivalent_not_synd}): For the sake of contradiction, suppose $S$ is syndetic. Let $k \in \N$ be such that $\bigcup_{i=0}^{k-1} (S - i) = \N$. Fix an arbitrary integer $h > k$ and define $S_i := S \cap (h^2\N + [ih, (i+1)h))$ for $0 \leq i \leq h - 1$.
Let $f: S \to \R$ be a bounded function that satisfies $f(n) = i$ for $n \in S_i$.

By \cref{lem:syndetic_not_int2}, for any totally transitive system $(X, T)$ and transitive point $x \in X$, there exists $0 \leq i < j \leq h - 1$ such that $\overline{T^{S_i} x} \cap \overline{T^{S_j} x} \neq \varnothing$. Hence, there does not exist $F \in C(X)$ such that $F(T^{S_i} x) = \{i\}$ and $F(T^{S_j} x) = \{j\}$. As a result, there is no $F \in C(X)$ such that the function $n \mapsto F(T^nx)$ extends $f$. In particular, $S$ is not weak interpolation of order $h$, and so also not weak interpolation of all orders, for totally transitive systems.

(\ref{item:equivalent_not_synd}) $\Rightarrow$ (\ref{item:equivalent_mixing}): 
Let $S\subset\N$ be a set which is not syndetic, take an arbitrary bounded function $f:S\to\C$ and let $K\subset\C$ be a compact set containing $f(S)$.
We consider the full shift with (potentially infinite) alphabet $K$, defined as the topological dynamical system $(K^{\N_0},\sigma)$ where $\sigma:(z_n)_{n\in\N_0}\mapsto(z_{n+1})_{n\in\N_0}$ is the left shift.
Note that $K^{\N_0}$, equipped with the product topology is a compact metric space and that $\sigma$ is continuous.
The system $(K^{\N_0},\sigma)$ is strong mixing and, in particular, transitive.
Let $y\in K^{\N_0}$ be a transitive point.
We think of points in $K^{\N_0}$ as functions $\N_0\to K$.

Since $S$ is not syndetic, then for every $n\in\N$ there is an interval $I_n:=\{m_n+1,m_n+2,\dots,m_n+n\}$ of length $n$ which is disjoint from $S$.
Extend $f:S\to K$ to a function $g:\N_0\to K$ by letting $g|_{I_n}=y|_{\{1,\dots,n\}}$.
Then the orbit closure of $g$ as an element of $K^{\N_0}$ contains $y$, and since $y$ is a transitive point, so is $g$.
Finally, let $F:K^{\N_0}\to\C$ be the projection onto the $0$-th coordinate.
If follows that $F(\sigma^ng)=g(n)$ for all $n\in\N_0$.
In particular, $F(\sigma^ng)=f(n)$ for all $n\in S$, showing that $S$ is an interpolation set for strongly mixing systems.
\end{proof}

\section{Minimal and totally minimal systems}

In this section we prove \cref{thm:interpolation_minimal_systems_new} by combining the next two lemmas. The first lemma comes from \cite{Glasner-Weiss-Interpolation}.

\begin{Lemma}[Glasner-Weiss {\cite[Theorem 1 (Part 1)]{Glasner-Weiss-Interpolation}}]
\label{lem:lem:glasner-weiss-order-2}
Piecewise syndetic sets are not weak interpolation of order $2$ for minimal systems.
\end{Lemma}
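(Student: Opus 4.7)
The plan is to argue by contradiction: assume $S$ is piecewise syndetic and construct an $f:S\to\{0,1\}$ that admits no extension of the form $n\mapsto F(T^n x)$ in a minimal system $(X,T)$. Piecewise syndeticity of $S$ gives an integer $k$ such that $E_k:=\bigcup_{i=0}^{k-1}(S-i)$ is thick; hence there are intervals $I_N=[a_N,a_N+\ell_N]\subset E_k$ with $\ell_N\to\infty$, and inside each $I_N$ every window of $k$ consecutive integers meets $S$.

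First I would prove a structural claim: if an interpolating $F\in C(X)$ exists on a minimal $(X,T)$ with transitive point $x$, and $g(n):=F(T^n x)$, then there is some pair $(i_0,j_0)\in\{0,\dots,k-1\}\times\{0,1\}$ for which $\{m:g(m)=j_0\}$ contains a syndetic set. Indeed, consider the continuous map $G:X\to\C^k$ given by $G(y)=(F(y),F(Ty),\dots,F(T^{k-1}y))$. In a minimal system every thick set of return times meets every nonempty open set, so $\{T^n x:n\in E_k\}$ is dense in $X$; for each $n\in E_k$ some coordinate of $G(T^n x)$ equals $f(n+i)\in\{0,1\}$, so by continuity
\[
G(X)\ \subset\ \bigcup_{i<k,\ j\in\{0,1\}}\{v\in\C^k:v_i=j\}.
\]
Since $G(X)$ is a compact metric space covered by finitely many closed sets, the Baire category theorem yields a pair $(i_0,j_0)$ and a nonempty open $U\subset X$ on which $F(T^{i_0}\cdot)\equiv j_0$. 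The return-time set $\{n:T^n x\in U\}$ is syndetic by minimality, and its translate by $i_0$ is contained in $\{m:g(m)=j_0\}$.

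Given this, $f^{-1}(j_0)\supset S\cap P$ for some syndetic $P$ and some $(i_0,j_0)$ among $2k$ possibilities. To finish, I would construct $f$ so that no such inclusion can hold, for any syndetic $P$ and any $j_0\in\{0,1\}$. The construction uses the intervals $I_N$: on $S\cap I_N$, partition $I_N$ into sub-blocks whose length grows with $N$ and alternate the value of $f$ on $S$-points across consecutive sub-blocks. Because $S$ is $k$-syndetic inside $I_N$, each sub-block of length $L$ contains $\Theta(L/k)$ $S$-points all carrying the same assigned value; by letting the block-lengths $L_N\to\infty$, the alternating pattern of $f$ prevents $S\cap P$ from being entirely contained in a single level set of $f$ once $L_N$ exceeds the syndetic gap of $P$.

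The main obstacle is the quantitative coordination between the Baire-derived syndetic constant for $P$ (which a priori depends on $U$, hence on $F$, hence on $f$) and the block-length sequence $L_N$ used to build $f$. I would address this by making $L_N\to\infty$ fast enough to defeat any syndetic gap that arises, or more robustly by refining the Baire step with the modulus of continuity of $G$ on the compact space $X$: this gives $U$ of controlled diameter and therefore a syndetic bound on $P$ depending only on $\varepsilon$-parameters fixed in advance, not on $f$ itself. Breaking this circularity is the most delicate step of the plan.
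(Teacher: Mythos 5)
Your Baire step is a reasonable reconstruction, and the conclusion it yields is correct: if $g(n)=F(T^n x)$ extends $f$ with $(X,T)$ minimal and $x$ transitive, then there exist $j_0\in\{0,1\}$ and a syndetic $P$ with $g\equiv j_0$ on $P$, hence $S\cap P\subset f^{-1}(j_0)$. (Note the paper does not reprove this lemma; it cites Glasner--Weiss and explains in a remark that their construction already yields a $\{0,1\}$-valued counterexample $\eta$, so there is no internal proof to compare against line by line.)

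The genuine gap is in the construction step. You want an $f$ for which the inclusion $S\cap P\subset f^{-1}(j_0)$ fails for every syndetic $P$ and every $j_0\in\{0,1\}$. That goal is unachievable in general, and your alternating-block $f$ does not reach it. The inclusion is \emph{vacuously} satisfied whenever $S\cap P=\varnothing$, and a piecewise syndetic set can be disjoint from a syndetic set: for instance $S=\{2m : m\in\N\}\cap \bigcup_n[10^n,2\cdot 10^n]$ is piecewise syndetic yet disjoint from the syndetic set of odd integers. More generally, even when $S\cap P\neq\varnothing$, the step ``$L_N$ exceeds the syndetic gap of $P$, hence $S\cap P$ meets sub-blocks of both parities'' does not follow: the gap bound on $P$ ensures that $P$ meets every sub-block of length $L_N$, but it gives no control on $S\cap P$. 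Since $S$ is only $k$-syndetic inside the thick intervals, $S$ and $P$ can systematically miss each other inside a sub-block (again, think of $S$ supported on one residue class and $P$ on another), and then nothing constrains which value-blocks $S\cap P$ visits. So the derived constraint is too weak to be contradicted by any choice of $f$ of the proposed form.

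The circularity you flag at the end (the gap bound of $P$ depending on $F$ and hence on $f$) is a real but secondary concern, and the fix you sketch (modulus-of-continuity control on $G$) addresses only that secondary issue, not the one above. To make this style of argument work, the Baire step would need to be anchored to the actual visit set $\{T^n x:n\in S\}$ rather than to the thick set $E_k$ (so that the resulting syndetic $P$ is forced to interact with $S$), or one should instead follow the Glasner--Weiss route of passing to limit points of the pair $(g,1_S)$ along the long intervals inside the thick part of $S$ and exploiting uniform recurrence of the limit; either way, an additional idea is required to tie the syndetic return set to $S$ itself.
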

\begin{Remark} 
The result of Glasner and Weiss given in \cite{Glasner-Weiss-Interpolation} asserts that any piecewise syndetic set is not an interpolation set for minimal systems. 
However, their proof yields the stronger conclusion after careful examination. 
Indeed, their proof involves constructing, for any piecewise syndetic set $S$, a function $\eta$ with domain $S$ for which $\eta(s)$ cannot be represented as $F(T^s x)$ for $s \in S$ and a minimal $(X, T)$. This function $\eta$ always has codomain $\{0,1\}$, and so this immediately also demonstrates that $S$ is not weak interpolation of order $2$ for minimal systems.    
\end{Remark}

\begin{Lemma}\label{lem:non-PW-syndetic-totally-minimal}
Every non-piecewise syndetic set is an interpolation set for totally minimal systems.
\end{Lemma}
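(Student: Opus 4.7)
The plan is to realize the interpolation symbolically. Let $K \subset \C$ be a compact set with $f(S) \subset K$. I aim to construct a point $x \in K^{\N_0}$ with $x(n) = f(n)$ for every $n \in S$, such that the orbit closure $X := \overline{\{\sigma^n x : n \in \N_0\}}$ under the left shift $\sigma$ is a totally minimal subshift. Once such an $x$ is found, taking $F \in C(K^{\N_0})$ to be the projection onto the zeroth coordinate gives $F(\sigma^n x) = x(n) = f(n)$ for all $n \in S$, while $x$ is automatically a transitive point of $(X,\sigma)$, so the triple $((X,\sigma), x, F)$ realizes the interpolation.

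The key input is that, since $S$ is not piecewise syndetic, the complement $T := \N \setminus S$ is thickly syndetic: for every $N \in \N$ there exists a syndetic set $E_N \subset \N$ with $[m, m+N] \subset T$ for every $m \in E_N$. In particular, for every $k \in \N$ and $0 \leq r < k$, the intersection $E_N \cap (k\N_0 + r)$ remains syndetic inside the arithmetic progression $k\N_0 + r$, so inside $T$ one can locate arbitrarily long intervals whose starting positions lie in any prescribed residue class modulo any $k$, with bounded gaps within that class. I would build $x$ by the following recursion. At stage $j$, the values $x(0), x(1), \ldots, x(M_j)$ have already been set, consistently with $x|_S = f$ on $S \cap [0, M_j]$. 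For each pair $(k, r)$ with $1 \leq k \leq j$ and $0 \leq r < k$, I locate a syndetically-dense collection of positions $p \equiv r \pmod{k}$ in a newly-chosen window to the right of everything defined so far, each satisfying $[p, p + M_j] \subset T$, and on each such $p$ I plant a copy of the current prefix by setting $x(p + i) := x(i)$ for $0 \leq i \leq M_j$. Positions of $T$ not covered by any plant are assigned a fixed element $\kappa \in K$, and positions of $S$ receive the values dictated by $f$. This extends $x$ up to some $M_{j+1}$, completing the stage.

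By arranging the plantings to be syndetically frequent within each residue class at each stage, the resulting $x$ has the property that every finite prefix of $x$ appears at a syndetic set of positions inside every arithmetic progression $k\N_0 + r$. A standard symbolic argument then shows that the orbit closure $(X, \sigma)$ is minimal (every subword recurs syndetically), and that for every $k \geq 1$ the $\sigma^k$-orbit of $x$ is dense in $X$; since a single dense $\sigma^k$-orbit in a $\sigma$-minimal system forces $(X, \sigma^k)$ to be minimal, this yields total minimality of $(X, \sigma)$. The main technical obstacle is the careful scheduling of plantings so that each prefix (and ultimately each finite subword) of $x$ recurs syndetically in every residue class simultaneously, while the defined portion of $x$ grows in a controlled way. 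The thick syndeticity of $T$ is precisely what makes this possible: it furnishes an inexhaustible reserve of long planting sites inside $T$ in every residue class, so the recursion proceeds without ever conflicting with the prescribed values $x|_S = f$.
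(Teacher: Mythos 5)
Your overall strategy is the right one and matches the paper's in spirit: work symbolically over the compact alphabet $K$, exploit that $T=\N\setminus S$ is thickly syndetic, and produce a point in $K^{\N_0}$ whose orbit closure is totally minimal, then read off the interpolation with the zeroth-coordinate projection. However, there is a genuine gap in the step you flag as ``the main technical obstacle,'' and thick syndeticity alone does not close it.

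The problem is that your point $x$ is asked to do two incompatible jobs at once: carry the prescribed, arbitrary values of $f$ on $S$, and simultaneously be a \emph{transitive} point of a minimal system, which requires every prefix of $x$ to recur with \emph{bounded} gaps. In your scheme, the prefix $x(0)\dots x(M_j)$ is planted at positions $p$ with $[p,p+M_j]\subset T$ inside a window $W_j$, and the gaps between admissible $p$'s are controlled by the thick-syndeticity constant of $T$ at scale $M_j$ --- a quantity that can be far larger than $M_j$ and that you do not get to choose. At stage $j+1$ you only plant the longer prefix $x(0)\dots x(M_{j+1})$; between consecutive stage-$(j+1)$ plants you fill with $\kappa$ and $f$-values, so in $W_{j+1}$ the prefix $x(0)\dots x(M_j)$ appears only inside those widely-spaced stage-$(j+1)$ plants. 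Unless you also keep replanting the shorter prefix (and indeed an $\varepsilon$-dense family of shorter words, not just a prefix) in every residue class at every later stage, the maximal gap in appearances of $x(0)\dots x(M_j)$ grows with the stage, and syndetic recurrence --- hence minimality of the orbit closure --- fails. A secondary slip: $E_N$ syndetic in $\N$ does not make $E_N\cap(k\N_0+r)$ syndetic in that progression (it can miss it entirely); you can fix this by looking at $E_{N+k}$ and sliding inside the gap, but it needs to be said.

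The paper avoids all of this by a cleaner separation of concerns. It first builds a totally minimal subshift $X$ \emph{independently of $f$}, as the orbit closure of a Toeplitz-style point $y$ that is a wall-to-wall concatenation of words from hierarchical families $X_k$ (length $m_k$) and $X_k'$ (length $m_k+1$), each $X_{k+1}$-word required to contain $2^{-k}$-dense representatives of $X_k$ and $X_k'$ starting at every residue class mod $k!$. Because $y$ is a pure concatenation, block boundaries occur with gaps at most $m_{k+1}+1$, and syndetic recurrence (in every residue class) is automatic. Only then is the interpolating point $x_u$ constructed, by filling $S$ with the $u$-values and using the non-piecewise-syndeticity of $S$ to locate, inside each $m_{k+1}$-block that meets $S$, a long enough gap in $S$ where the required $2^{-k}$-dense representatives can be placed, so that $x_u$ lies in $X$. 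Transitivity of $x_u$ is then free from minimality of $X$. Your proposal, as written, conflates the roles of $y$ and $x_u$ and does not supply the mechanism that keeps the recurrence gaps bounded.
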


\begin{proof}
Let $S\subset\N$ be a non-piecewise syndetic set.
Let $u: S\to\C$ be an arbitrary bounded function and let $K\subset\C$ be a compact set containing $u(S)$ and $0$.
We construct a compact set $X\subset K^{\N_0}$ iteratively via auxiliary sequences $m_k \in \mathbb{N}$, closed (thereby compact) sets 
$X_k \subset K^{m_k}$ and $X'_k \subset K^{m_k + 1}$, and
$w_k \in X_k$. 
Define $m_0 = 1$, $X_0 = K$, $X'_0 = K^2$, and $w_0 = 0$. 
Suppose now that $k\geq0$ and that $m_k$, $X_k$, and $w_k$ are defined. 
Since $X_k, X'_k$ are compact,
there exist $2^{-k}$-dense 
finite subsets $T_k, T'_k$ of $X_k$ and $X'_k$ respectively (in the $\ell^\infty$ metric). 
Without loss of generality, we assume that $k!$ divides both $|T_k|$ and $|T'_k|$.
Define
$m_{k+1}$ to be a multiple of $m_k (k+1)!$ 
with the property that every interval of length 
$m_{k+1}$ contains a gap in $S$ of length $4m_k^2(|T_k| + |T'_k|)$,
i.e. for every interval $I\subset\N$ of length $m_{k+1}$, there exists a subinterval 
$J\subset I$ of length
$4m_k^2(|T_k| + |T'_k|)$ so that $J \cap S = \varnothing$. 
(We use here the fact that $S$ is not piecewise syndetic, which is equivalent to the fact that for all $n$, gaps of length $n$ occur syndetically in $S$.) Note that by construction, $k!$ divides $m_k$ for all $k$.

Define $X_{k+1}$ to be the set of all strings $w$ with length $m_{k+1}$ which are concatenations of strings in $X_k$ and $X'_k$ and which have the following property: for each $0 \leq i < k!$, the set of concatenated elements of $X_k$ whose first letter is at a location in $w$
equal to $i \pmod{k!}$ is $2^{-k}$-dense in $X_k$, and the set of concatenated elements of $X'_k$ whose first letter is at a location in $w$
equal to $i \pmod{k!}$ is $2^{-k}$-dense in $X'_k$. 
Similarly define $X'_{k+1}$, with the only change being that elements of $X'_{k+1}$ must have length $m_{k+1} + 1$.

We note that both $X_{k+1}$ and $X_{k+1}'$ are nonempty. 
Indeed, if $T_k = \{w^{(k)}_1, \ldots, w^{(k)}_{|T_k|}\}$, $T'_k = \{w'^{(k)}_1, \ldots, w'^{(k)}_{|T'_k|}\}$, and $v$ is an arbitrary element of $X'_k$, then the reader may check that the string
\[
w_{k+1} := w_k w_k \ldots w_k (w^{(k)}_1 w^{(k)}_2 \ldots w^{(k)}_{|T_k|} w'^{(k)}_1 w'^{(k)}_2 \ldots w'^{(k)}_{|T'_k|} v)
\ldots
(w^{(k)}_1 w^{(k)}_2 \ldots w^{(k)}_{|T_k|} w'^{(k)}_1 w'^{(k)}_2 \ldots w'^{(k)}_{|T'_k|} v)
\]
is in $X_{k+1}$, where $w^{(k)}_1 w^{(k)}_2 \ldots w^{(k)}_{|T_k|} w'^{(k)}_1 w'^{(k)}_2 \ldots w'^{(k)}_{|T'_k|} v$ (which has length equal to $1 \pmod{k!}$) is repeated $m_k$ times and the number of $w_k$ at the beginning is chosen to make the entire word have length $m_{k+1}$. (Note that the terminal portion has length $m_k\left(m_k|T_k| + (m_k+1)(|T'_k|+1)\right) < 4m_k^2 (|T_k| + |T'_k|)$, which is less than $m_{k+1}$.) Similarly,
\[
    v w_k w_k \ldots w_k (w^{(k)}_1 w^{(k)}_2 \ldots w^{(k)}_{|T_k|} w'^{(k)}_1 w'^{(k)}_2 \ldots w'^{(k)}_{|T'_k|} v)
    \ldots
    (w^{(k)}_1 w^{(k)}_2 \ldots w^{(k)}_{|T_k|} w'^{(k)}_1 w'^{(k)}_2 \ldots w'^{(k)}_{|T'_k|} v)
\]
is in $X'_{k+1}$, where the first concatenated $w_k\in X_k$ above is replaced by $v\in X_{k}'$.

Since $w_k$ is a prefix of $w_{k+1}$ for all $k$, we can define $y$ to be the limit of the $w_k$, and define $X$ to be the closure of the orbit of $y$. 

We claim that for any $j > 0$, $y$ is uniformly recurrent under the shift action by $\sigma^j$. 
Indeed, for any neighborhood $U$ of $y$, there exists $k > j$ so that
$V = \{z \ : \ \rho(z_1 \ldots z_{m_k}, y_1 \ldots y_{m_k}) < 2^{-k}\} \subset U$. (Here $\rho$ denotes the metric in $X_k$.)  
By definition, $w_k = y_1 \ldots y_{m_k} \in X_k$. 
Also by definition, $y$ is a concatenation of elements of $X_{k+1}$ and $X'_{k+1}$, meaning that the set of 
locations $t$ for which either $y(t) \ldots y(t + m_{k+1} - 1) \in X_k$ or $y(t) \ldots y(t + m_{k+1}) \in X'_k$ is syndetic (with gaps bounded from above by $m_{k+1} + 1$). 
For each such $t$, there exists a location $s \equiv -t \pmod{k!}$ so that 
$\rho(y(t+s) y(t+s+1) \ldots y(t+s+m_k -1), w_k) < 2^{-k}$, i.e.
$\sigma^{t+s} y \in V \subset U$. 
In addition, $t+s$ is a multiple of $k!$, and so a multiple of $j$. This implies that the set $\{n \in \N: \sigma^{jn} y \in U\}$ is syndetic.
Since $U$ was arbitrary, $y$ is uniformly recurrent for $\sigma^j$. Since $j$ was arbitrary, $X$ is totally minimal. 

It remains to construct $x_u \in X$ for which $x_u(s) = u(s)$ for all $s \in S$.
Let $S_0=S$ and, for each $k\in\N$ let $S_k$ be the union of all intervals of the form $(im_k,(i+1)m_k]$ which are not disjoint from $S$.
Note that $S_0\subset S_1\subset\cdots$ and that $\bigcup S_k=\N$.
For $k=0$ we take $x^{(0)}(s)=u(s)$ for all $s\in S=S_0$.
Then, for each $k\in\N_0$, we will inductively construct a function $x^{(k)}:S_k\to K$ so that the restriction of $x^{(k)}$ to $S_{k-1}$ coincides with $x^{(k-1)}$, and for each interval $(im_k,(i+1)m_k]$ the restriction $x^{(k)}\big((im_k,(i+1)m_k]\big)$ is a word from $X_k$.

Suppose now that $x^{(k)}:S_k\to K$ has been constructed.
Fix an interval $(im_{k+1},(i+1)m_{k+1}]$ contained in $S_{k+1}$.
From the definition of $m_{k+1}$, this interval $(im_{k+1},(i+1)m_{k+1}]$ contains a subinterval $J_0$ of length $4m_k^2\big(|T_k|+|T_k'|\big)$ which is disjoint from $S$.
In particular $J_0$ contains the interval $J=(am_k,bm_k]$ for some $a,b\in\N$ satisfying $b-a=m_k|T_k|+(m_k+1)(|T_k'|+1)$, and hence $J$ is disjoint from $S_k$.
We can then construct $x^{(k+1)}:(im_{k+1},(i+1)m_{k+1}]\to K$ so that 
\begin{itemize}
    \item $x^{(k+1)}\big((jm_k,(j+1)m_k]\big)=x^{(k)}\big((jm_k,(j+1)m_k]\big)\in X_k$ whenever $(jm_k,(j+1)m_k]\subset S_k$;
    \item$\displaystyle
x^{(k+1)}(J)=(w^{(k)}_1 w^{(k)}_2 \ldots w^{(k)}_{|T_k|} w'^{(k)}_1 w'^{(k)}_2 \ldots w'^{(k)}_{|T'_k|} v)
\ldots
(w^{(k)}_1 w^{(k)}_2 \ldots w^{(k)}_{|T_k|} w'^{(k)}_1 w'^{(k)}_2 \ldots w'^{(k)}_{|T'_k|} v),
$
where the string $w^{(k)}_1 w^{(k)}_2 \ldots w^{(k)}_{|T_k|} w'^{(k)}_1 w'^{(k)}_2 \ldots w'^{(k)}_{|T'_k|} v$ is repeated $m_k$ times.
\item $x^{(k+1)}\big((jm_k,(j+1)m_k]\big)\in X_k$ is arbitrary whenever $(jm_k,(j+1)m_k]\not\subset J$.
\end{itemize}
The conditions above imply that $x^{(k+1)}\big((im_{k+1},(i+1)m_{k+1}]\big)\in X_{k+1}$, so using this construction in each interval $(im_{k+1},(i+1)m_{k+1}]$ in $S_{k+1}$ we have constructed $x^{(k+1)}:S_{k+1}\to K$ as desired.

We can now take the limit as $k\to\infty$ to obtain $x_u:\N\to K$, formally defined as $x_u(s)=x^{(k)}(s)$ whenever $s\in S_k$; the construction above insures that $x_u$ is well defined.
Moreover, $x_u\big((0,m_k]\big)$ is in $X_k$ for every sufficiently large $k$, and so some shift of $y$ visits arbitrarily small neighborhoods of $x_u$, showing that $x_u\in X$ as desired.

Now, simply define $F \in C(X)$ by $F(x) = x(0)$. Then for all $s \in S$, $F(\sigma^s x_u) = x_u(s) = u(s)$. Since $u$ was arbitrary and $(X, \sigma)$ is totally minimal, we have shown that $S$ is an interpolation set for totally minimal systems. 
\end{proof}

\begin{named}{\cref{thm:interpolation_minimal_systems_new}}{}
Let $S \subset \N$. The following are equivalent:
\begin{enumerate}
    \item \label{item:weak-order-2} $S$ is a weak interpolation set of order $2$ for minimal systems. 
    
    \item \label{item:weak-all-order} $S$ is a weak interpolation set of all orders for minimal systems.
    
    \item \label{item:minimal} $S$ is an interpolation set for minimal systems.
    
    \item \label{item:totally-minimal} $S$ is an interpolation set for totally minimal systems.

    \item \label{item:not-PW} $S$ is not piecewise syndetic.
\end{enumerate}
\end{named}

\begin{proof}
The implications $(\ref{item:totally-minimal}) \Rightarrow (\ref{item:minimal}) \Rightarrow (\ref{item:weak-all-order}) \Rightarrow (\ref{item:weak-order-2})$ are obvious. The implication $(\ref{item:not-PW}) \Rightarrow (\ref{item:totally-minimal})$ is proved in \cref{lem:non-PW-syndetic-totally-minimal} and the implication $(\ref{item:weak-order-2}) \Rightarrow (\ref{item:not-PW})$ follows from \cref{lem:lem:glasner-weiss-order-2}.
\end{proof}

\section{Systems of bounded entropy}
\label{sec:entropy}

\begin{named}{\cref{prop:interpolation_finite_entropy}}{}
Let $S \subset \N$. The following are equivalent:
\begin{enumerate}
    \item \label{item:finite_entropy} $S$ is an interpolation set for systems of finite entropy.

    \item \label{item:entropy<=M} $S$ is an interpolation set for systems of entropy $\leq M$ for some fixed $M \geq 0$.

    \item \label{item:zero_entropy} $S$ is an interpolation set for systems of zero entropy.
    
    \item \label{item:entropy_and_zero_Banach_density} $d^*(S) = 0$.
\end{enumerate}
\end{named}
\begin{proof}
It is obvious that (\ref{item:zero_entropy}) $\Rightarrow$ (\ref{item:entropy<=M}) $\Rightarrow$ (\ref{item:finite_entropy}) and so it remains to prove (\ref{item:entropy_and_zero_Banach_density}) $\Rightarrow$ (\ref{item:zero_entropy}) and (\ref{item:finite_entropy}) $\Rightarrow$ (\ref{item:entropy_and_zero_Banach_density}).

(\ref{item:entropy_and_zero_Banach_density}) $\Rightarrow$ (\ref{item:zero_entropy}): 
This implication follows from Theorem 1.6, but we provide here a direct short proof.
Consider any set $S$ with $d^*(S) = 0$ and any bounded $f: S \rightarrow \mathbb{C}$. 
Extend $f$ to $\tilde{f}: \mathbb{N}_0 \rightarrow \mathbb{C}$ by defining $\tilde{f} = 0$ on $\N_0\setminus S$.
Now, the orbit closure of $\tilde{f}$ under the left shift $\sigma$ is
a shift system $(X, \sigma)$ with $X \subset \mathbb{C}^{\mathbb{N}_0}$. 
We claim that $(X,\sigma)$ has zero entropy.
Defining $F(x) = x(0)$ for all $x$ we have that for $n \in S$, $F(\sigma^n \tilde{f}) = \tilde{f}(n) = f(n)$, and $\tilde{f}$ is a transitive point in $X$ by definition, so assuming the claim we have that $S$ is an interpolation set for systems of zero entropy.

We claim that every $x \in X$ satisfies $d^*(x^{-1}(\C\setminus\{0\})) = 0$. Indeed, for any $\epsilon > 0$, by definition of upper Banach density, there exists $N$ so that for every $i \in \N$, 
\[
    \frac{|\{0 \leq j < N \ : \ \tilde{f}(i+j) \neq 0\}|}{N} < \epsilon.
\]
This property is preserved under shifts and limits, and so holds for any $x \in X$ as well, implying that $d^*(x^{-1}(\C\setminus\{0\})) = 0$.
Therefore, by the ergodic theorem, the only invariant measure on $(X, \sigma)$ is the delta-measure for the constant sequence $0$. By the variational principle, $(X, \sigma)$ has topological entropy $0$. 

(\ref{item:finite_entropy}) $\Rightarrow$ (\ref{item:entropy_and_zero_Banach_density}): Consider any set $S$ with $d^*(S) > 0$ and assume for a contradiction that $S$ is an interpolation set for the class of finite entropy systems. Choose $\delta < d^*(S)$. 


There exist arbitrarily large $m \in \N$ and sets $S_m \subset[1,m]$ with $|S_m| \geq \delta m$ for which infinitely many shifts of $S_m$ are contained in $S$. Write $S_m = \{s_{m,1}, \ldots, s_{m,|S_m|}\}$ in increasing order for all $m$.
Let $\mathcal{S}^1$ 
denote the unit circle $\{z \in \C: |z| = 1\}$. Construct a sequence $f:S\to \mathcal{S}^1$ which, for all $k$ and $m$, contains every possible sequence of $k$th roots of unity of length $|S_m|$ along some shift of $S_m$, i.e. for any such string $c = (c_1, \ldots, c_{|S_m|})$, there exists 
$j_c$ so that $f(j_c + s_i) = c_i$ for $1 \leq i \leq |S_m|$. 
By assumption, there exists a finite entropy system $(X, T)$, $x \in X$, and 
$F \in C(X)$ where $F(T^n x) = f(n)$ for all $n$. Say that $h(X, T) < \delta \log k$. Now, define $\epsilon$ so that points of $X$ within distance $\epsilon$ cannot have $F$-values which are distinct $k$th roots of unity. 
For every $m$ and every $|S_m|$-tuple $c = (c_1, \ldots, c_m)$ of $k$th roots of unity, $F(\sigma^{j_c+s_i} x) = f(j_c+s_i) = c_i$ for $1 \leq i \leq |S_m|$. This implies that the collection $\{\sigma^{j_c} x\}$ of cardinality $k^{|S_m|} \geq k^{\delta m}$ is 
$(m, \epsilon)$-separated, and since $m$ can be arbitrarily large, $h(X, T) \geq \delta \log k$, a contradiction.
\end{proof}

\begin{named}{\cref{prop:weak-finite-entropy}}{}
    Every subset $S \subset \N$ is a weak interpolation set of all orders for systems of finite entropy.
\end{named}
\begin{proof}
    We will prove that $\mathbb{N}$ itself is a weak interpolation set of all orders for the class of systems of finite topological entropy. Then, clearly, the same will be true for all subsets of $\N.$

    
    Consider any $k$ and any function $f: \mathbb{N}_0 \rightarrow \{0, \ldots, k-1\}$. The orbit closure of $f$ under the left shift is a subshift $(X, \sigma)$ that has entropy bounded by $\log k$. Define $F : X \to \{0, \ldots, k - 1\}$ by $F(x) = x(0)$ for all $x \in X$. It follows that $f$ is a transitive point of $(X, \sigma)$ and $F(\sigma^n f) = f(n)$ for all $n \in \N$.
\end{proof}

Recall the entropy function $H:[0,1]\to\R$ with $H(\delta)=-\big(\delta\log\delta+(1-\delta)\log(1-\delta)\big)$ for $\delta\in(0,1)$ and $H(0) = H(1) = 0$. It's well known that $H(\delta)$ represents the exponential growth rate of the number of words in $\{0,1\}^m$ with fewer than $\delta m$ appearances of the symbol 1.
For completeness we include the proof of the following corresponding estimate on larger alphabets.

\begin{Lemma}
    \label{conj_entropy41}
Fix $k \geq 2$. For each $m \in \N$ and $\delta \in[0,1/2]$, let 
$$S(m,\delta,k):=\Big\{f:\{0,\ldots, m-1\}\to\{0,\dots,k-1\}:\big|f^{-1}(0)\big| \geq(1-\delta)m\Big\}.$$
Then 
\[
    \displaystyle\lim_{m\to\infty}\frac{\log \big|S(m,\delta,k)\big|}{m} = \inf_{m}\frac{\log \big|S(m,\delta,k)\big|}{m} = H(\delta) + \delta \log(k-1).
\]
\end{Lemma}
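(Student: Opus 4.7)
The starting point is the exact formula
\[
    |S(m,\delta,k)| \;=\; \sum_{j=0}^{\lfloor\delta m\rfloor}\binom{m}{j}(k-1)^j,
\]
obtained by first choosing the $j$ positions of non-zero entries and then assigning them values in $\{1,\dots,k-1\}$. This reduces the lemma to two standard facts about such partial sums: a Stirling-based asymptotic for the dominant term, and a super-additivity argument via concatenation.

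For the value of the limit, I would argue that the maximum term dominates the sum up to a polynomial factor. Setting $t_j := \binom{m}{j}(k-1)^j$, the ratio $t_{j+1}/t_j = (m-j)(k-1)/(j+1)$ is $\geq 1$ precisely when $j \leq m(k-1)/k - 1/k$; since $\delta\leq 1/2 \leq (k-1)/k$, this forces $t_0,t_1,\dots,t_{\lfloor\delta m\rfloor}$ to be non-decreasing (the case $\delta=0$ is trivial, as then $|S(m,0,k)|=1$). Therefore
\[
    t_{\lfloor\delta m\rfloor} \;\leq\; |S(m,\delta,k)| \;\leq\; (m+1)\, t_{\lfloor\delta m\rfloor}.
\]
A standard Stirling estimate gives $\log\binom{m}{\lfloor\delta m\rfloor} = mH(\delta)+O(\log m)$ (continuity and local Lipschitz-ness of $H$ absorb the replacement of $\lfloor\delta m\rfloor/m$ by $\delta$), so
\[
    \log|S(m,\delta,k)| = m\bigl(H(\delta)+\delta\log(k-1)\bigr) + O(\log m),
\]
and dividing by $m$ yields the limit.

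To identify this limit with an extremum over $m$, I would use the fact that concatenation defines an injection $S(m,\delta,k)\times S(n,\delta,k)\hookrightarrow S(m+n,\delta,k)$: if $|f_1^{-1}(0)|\geq(1-\delta)m$ and $|f_2^{-1}(0)|\geq(1-\delta)n$, then the number of zeros in the concatenated word is at least $(1-\delta)(m+n)$. Hence $a_m := \log|S(m,\delta,k)|$ is \emph{super-additive}, and the super-additive version of Fekete's lemma yields
\[
    \lim_{m\to\infty}\frac{a_m}{m}\;=\;\sup_{m\in\N}\frac{a_m}{m}.
\]
Combined with the Stirling asymptotic above, this identifies $\sup_m a_m/m$ with $H(\delta)+\delta\log(k-1)$. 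I should note that super-additivity produces the \emph{supremum}, not the infimum; indeed, for $\delta>0$ the $m=1$ term gives $a_1/1=0<H(\delta)+\delta\log(k-1)$, so I suspect the ``$\inf$'' in the stated lemma is a typo for ``$\sup$''. The proof strategy is unaffected.

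The only real obstacle in carrying this out is bookkeeping: keeping the $O(\log m)$ error in Stirling small enough after division by $m$, and verifying the monotonicity of $t_j$ up to index $\lfloor\delta m\rfloor$, which is precisely where the hypothesis $\delta\leq 1/2$ (equivalently $\delta\leq(k-1)/k$ for $k\geq 2$) enters. No step involves machinery beyond Stirling's formula and Fekete's lemma.
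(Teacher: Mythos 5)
Your proof is correct and follows essentially the same strategy as the paper's: the exact identity
\[
|S(m,\delta,k)| \;=\; \sum_{j=0}^{\lfloor\delta m\rfloor}\binom{m}{j}(k-1)^j,
\]
sandwiching the sum between the largest term $t_{\lfloor\delta m\rfloor}$ and a polynomial multiple of it (using $\delta\leq 1/2$ to get monotonicity of $t_j$), Stirling's approximation for the asymptotic, and Fekete's lemma for the extremum.

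You have, however, caught a genuine error in the paper. The paper asserts that $a_m := \log|S(m,\delta,k)|$ is \emph{subadditive} and then invokes Fekete to conclude $\lim_m a_m/m = \inf_m a_m/m$. Both assertions are wrong. As you observe, concatenating two admissible words yields another admissible word, so $a_m + a_n \leq a_{m+n}$: the sequence is super-additive, and the corresponding form of Fekete's lemma gives $\lim_m a_m/m = \sup_m a_m/m$. Indeed the infimum demonstrably does not equal $H(\delta)+\delta\log(k-1)$: for $\delta>0$ one has $|S(1,\delta,k)|=1$ (the only admissible word of length $1$ is the all-zeros word), hence $a_1/1=0<H(\delta)+\delta\log(k-1)$, while the subadditivity one would need, $a_2\leq 2a_1=0$, already fails. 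So your remark that ``inf'' should be ``sup'' is exactly right, and the error propagates no further: the only downstream use of the lemma needs only the value of the limit.
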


\begin{proof}
Every element of $S(m, \delta, k)$ can be described by first describing the locations of the non-$0$ letters, and then assigning any non-$0$ letter at each of those locations. 
Therefore, 
\[
    |S(m, \delta, 2)| = \sum_{i = 0}^{\lfloor \delta m \rfloor} (k-1)^i \binom{m}{i}.
\]
Since $\delta \leq 1/2$, $\binom{m}{i}$ is increasing from $i = 0$ to $\lfloor \delta m \rfloor$ (and $(k-1)^i$ is increasing for all $i$), and so
\begin{equation}\label{deltabd}
(k-1)^{\lfloor \delta m \rfloor} {\binom{m}{ \lfloor \delta m \rfloor}} \leq |S(m, \delta, 2)| \leq (\lfloor \delta m \rfloor + 1) 
(k-1)^{\lfloor \delta m \rfloor} {\binom{m}{\lfloor \delta m \rfloor}}.
\end{equation}
Define $\delta_m = \frac{\lfloor \delta m \rfloor}{m}$. Clearly $\delta_m \leq 1/2$ and $\delta_m \rightarrow \delta$. By Stirling's approximation, 
\begin{align*}
\log(k-1)^{\lfloor \delta m \rfloor}  {\binom{m}{\lfloor \delta m \rfloor}}
&=  \log(k-1)^{\delta_m m} {\binom{m}{\delta_m m}}\\ 
&= 
\delta_m m \log(k-1) + \log m! - \log (\delta_m m)! - \log ((1 - \delta_m)m)!\\
&=\delta_m m \log(k-1) + m \log m - m - (\delta_m m) \log(\delta_m m) \\
&+ \delta_m m - ((1-\delta_m)m) \log((1 - \delta_m)m) + (1 - \delta_m) m + o(m)\\
&= \delta_m m \log(k-1) + m H(\delta_m) + o(1).
\end{align*}
Since $H$ is a continuous function and $\delta_m \rightarrow \delta$, by (\ref{deltabd}), 
$\frac{\log |S(m, \delta, k)|}{m} \rightarrow H(\delta) + \delta \log(k-1)$.

Finally, it's easily checked that the sequence $\log |S(m, \delta, k)|$ is subadditive in $k$, and so by Fekete's subadditivity lemma (see \cite[p.~233]{Fekete} or \cite[Theorem 4.9]{Walters82}), 
the limit of $\frac{\log |S(m, \delta, k)|}{m}$ is also the infimum.
\end{proof}

We recall~\cref{thm_entropydensityclassification_intro_new} from the introduction.
\begin{named}{\cref{thm_entropydensityclassification_intro_new}}{}
Let $\delta\in[0,1/2]$. For any $k \in \N$ and a set $S\subset\N,$ we have the following: 
\begin{enumerate}
\item \label{item:entropy-density-greater} If $d^*(S) > \delta$, then $S$ is not a weak interpolation set of order $k$ for systems of entropy $\leq \delta \log k$. 

\item \label{item:entropy-density-smaller}
If $d^*(S)\leq \delta$, then $S$ is a weak interpolation set of order $k$ for systems of entropy $\leq H(\delta) + \delta \log(k-1)$.

\end{enumerate}
Moreover, (1) is sharp in the following sense:

\begin{enumerate}
\item [3.] \label{item:density-sharp} There exists $S \subset \N$ with $d^*(S) = \delta$ such that $S$ is a weak interpolation set of order $k$ for the class of systems of entropy $\leq \delta \log k$.
\end{enumerate}
\end{named}


\begin{proof} \
(\ref{item:entropy-density-greater}) Assume $d^*(S) > \delta$ and take an intermediate value $\delta<\delta'<d^*(S)$. 
Then there exist arbitrarily large $m \in \N$ and infinitely many intervals $[n+1, n+m]$ such that
\[
    |S \cap [n + 1, n + m]| \geq \delta' m. 
\]
Since there are only finitely many subsets of $[1,m]$, there exists some set $S_m\subset[1,m]$ for which infinitely many shifts are contained in $S$.
Construct a sequence $f:S\to\{0,1, \ldots, k-1\}$ which, for all $m$, contains all possible functions on $S_m\to\{0,1,\ldots,k-1\}$ up to shift; note that there are at least $k^{\delta' m}$ many such words.

Suppose that $(X,T)$ is a topological dynamical system and that there exists a transitive point $x \in X$ and $\phi \in C(X)$ such that $\phi(T^n x) = f(n)$ for all $n \in S$. 
Note that the sets $S_i = \phi^{-1}(\{i\})$ are disjoint compact subsets of $X$ for $0 \leq i < k$, and so they are pairwise separated by some distance $\epsilon$. Then, for all $m$, the set $\{T^nx:n\in S\}$ contains an $(m, \epsilon)$-separated set of cardinality $k^{\delta' m}$. 
This implies that $h(X,T) \geq \delta'\log k>\delta\log k$.

In particular, $S$ is not a weak interpolation set of order $k$ for the class of systems of entropy $\leq \delta\log k$.

\medskip
 
(\ref{item:entropy-density-smaller}) Suppose $S \subset \N$ has $d^*(S) \leq \delta \leq 1/2$ and let $f: S \rightarrow \{0, \ldots, k-1\}$ be arbitrary. 
Extend $f$ to all of $\N_0$ by letting $f(n)=0$ whenever $n\notin S$. 
Identify $f$ with a point $x_f\in\{0,\dots,k-1\}^{\N_0}$ and let $X$ be its orbit closure under the left shift map.

  For any $\delta' > \delta$, by definition of $d^*$ there exists $M$ so that $|S \cap I| < |I|\delta'$ for all intervals of length at least $M$. 
  Therefore, for all $m \geq M$, the language set of $X$ satisfies $L_m(X) \subset S(m, \delta', k)$.
  Invoking \cref{conj_entropy41} we deduce that $h(X) \leq H(\delta') + \delta' \log(k-1)$. 
  Since $\delta' > \delta$ was arbitrary and $H$ is continuous, 
  $h(X) \leq H(\delta) + \delta \log(k-1)$, completing the proof.

\medskip
 
(3) The idea is to choose $S$ very regular so that even though $S$ has large density, we can always extend a function on $S$ to a function on $\N$ that has small entropy.

Let $S = \{\lfloor n/\delta \rfloor: n \in \N\}$ so that $d^*(S) = \delta$ and let $x\in\{0,1\}^{\N_0}$ be its indicator function.
The orbit closure $X$ of $x$ under the left shift map is a so-called Sturmian subshift with rotation number $\delta$. 
In particular, the number of $m$-letter words in the language of $X$ is $m+1$ for all $m$, and it's clear by definition that the number of $1$s in any word of length $m$ in the language of $X$ is at most $\lceil m\delta \rceil$ where $\lceil \cdot \rceil$ denotes the ceiling function. 

Now, further define a subshift $Y\subset\{0,\dots,k-1\}^{\N_0}$ obtained by changing some $1$s in a point of $X$ to any letters in $\{0, \ldots, k-1\}$. 
The number of words of length $m$ in $Y$ is at most $(m+1) k^{\lceil m\delta \rceil}$ for any $m\in\N$, so $h(Y) \leq \delta \log k$. 


Finally, for any function $f: S \to \{0, 1, \ldots, k-1\}$, we can extend $f$ to $x: \N_0 \to \{0, 1, \ldots, k-1\}$ by mapping to $0$ on $S^c$. The orbit closure of $f$ under the left shift is a subshift $X_f$ for which $x$ is a transitive point, and $X_f$ is contained in $Y$, so $h(X_f) \leq h(Y) \leq \delta \log k$. 
\end{proof}

\section{Uniquely ergodic and strictly ergodic systems with zero entropy}

\label{sec:strictly_ergodic}



We prove \cref{prop:uniquely_ergodic_one} in this section. First, we observe that if $d^*(S) > 0$, then $S$ is not a weak interpolation of all orders for uniquely ergodic systems.

\begin{Lemma}\label{thm:finite_alphabet_uniquely_ergodic}
If $d^*(S) > 0$, then there exists $k\in\N$ such that $S$ is not a weak interpolation set of order $k$ for the class of uniquely ergodic systems.
\end{Lemma}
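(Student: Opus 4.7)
The plan is to exploit uniform equidistribution in uniquely ergodic systems to bound, for each $i$, the Banach density $d^*(f^{-1}(\{i\}))$ by the $\mu$-mass of a suitable open set in $X$, and simultaneously to design $f$ so that the sum of these Banach densities exceeds $1$.

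Let $\delta = d^*(S) > 0$ and fix $k \in \N$ with $k\delta/2 > 1$. Using shift-invariance of Banach density, one can inductively choose pairwise disjoint intervals $I_j = [M_j, M_j + N_j - 1]$ with $N_j \to \infty$ and $|S \cap I_j|/N_j > \delta/2$: at each stage, having placed $I_1, \dots, I_j$, apply the definition of Banach density to the shifted set $S \cap [M_j + N_j, \infty)$ (which still has Banach density $\delta$) to find $I_{j+1}$ with $N_{j+1} > N_j$ and $M_{j+1} > M_j + N_j$. Partition the indices into $k$ infinite classes $J_0, \dots, J_{k-1}$ (e.g., by residue modulo $k$), and define $f\colon S \to \{0, \dots, k-1\}$ by $f(n) = i$ whenever $n \in S \cap I_j$ with $j \in J_i$, and $f(n) = 0$ otherwise. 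Writing $S_i := f^{-1}(\{i\})$, for each $i$ and each $j \in J_i$ we have $|S_i \cap I_j| = |S \cap I_j| > (\delta/2) N_j$; since $N_j \to \infty$ along the infinite subsequence $J_i$, this forces $d^*(S_i) \geq \delta/2$.

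Now suppose for contradiction that there exist a uniquely ergodic system $(X, T)$ with invariant measure $\mu$, a transitive point $x \in X$, and a continuous $F\colon X \to \R$ with $F(T^n x) = f(n)$ on $S$. Define open sets $U_i := F^{-1}((i - 1/4,\, i + 1/4)) \subset X$; by continuity of $F$ and disjointness of the closed intervals $[i - 1/4, i + 1/4]$, the closures $\overline{U_i}$ are pairwise disjoint. For $n \in S_i$ we have $F(T^n x) = i \in (i - 1/4, i + 1/4)$, hence $T^n x \in U_i$. For each $\eta > 0$, using outer regularity of $\mu$ and Urysohn's lemma, pick continuous $\phi_i \geq \mathbf{1}_{U_i}$ with $\int \phi_i\, d\mu \leq \mu(\overline{U_i}) + \eta$. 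The uniform-convergence formulation of unique ergodicity gives
\[
    \sup_M \Bigl|\tfrac{1}{N} \sum\nolimits_{n=M}^{M+N-1} \phi_i(T^n x) - \int \phi_i\, d\mu\Bigr| \longrightarrow 0 \quad (N \to \infty).
\]
Combined with $|S_i \cap [M, M+N-1]| \leq \sum_{n=M}^{M+N-1} \mathbf{1}_{U_i}(T^n x) \leq \sum_{n=M}^{M+N-1} \phi_i(T^n x)$ and letting $\eta \to 0$, this yields $d^*(S_i) \leq \mu(\overline{U_i})$. Summing,
\[
    \sum_{i=0}^{k-1} d^*(S_i) \leq \sum_{i=0}^{k-1} \mu(\overline{U_i}) \leq 1,
\]
contradicting $\sum_{i=0}^{k-1} d^*(S_i) \geq k\delta/2 > 1$.

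The main work is the construction of $f$: one must partition $S$ into $k$ pieces whose individual Banach densities are each bounded below by a positive quantity depending only on $\delta$. The pairwise disjoint intervals of growing length with density uniformly above $\delta/2$ accomplish this. Once this is in place, the upper bound $d^*(S_i) \leq \mu(\overline{U_i})$ is a routine consequence of equidistribution in uniquely ergodic systems, and disjointness of the closures $\overline{U_i}$ then closes the argument.
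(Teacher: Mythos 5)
Your argument is correct and essentially matches the paper's: both proofs color high-density disjoint intervals of $S$ cyclically with $k$ labels, then use unique ergodicity (uniform convergence of Birkhoff averages) together with disjointness of the level sets $F^{-1}(\{i\})$ to reach a contradiction of the form ``$k$ disjoint pieces each carry $\mu$-mass more than $1/k$.'' The only differences are cosmetic---the paper picks $k>1/d^*(S)$ (you pick $k>2/d^*(S)$), works directly with the compact sets $F^{-1}(\{i\})$ and a system of Urysohn bump functions $g_i$ with $\sum_i g_i\le 1$, and bounds $\int g_i\,d\mu$ from below along the designated intervals, whereas you take open neighborhoods $U_i$ and bound $d^*(S_i)$ from above by $\mu(\overline{U_i})$; these are dual packagings of the same estimate.
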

\begin{proof}

Suppose $d^*(S) > 1/k$ for some $k \in \N$. 
We can find a sequence $(F_N)_{N \in \N}$ of pairwise disjoint intervals in $\N$ such that 
\[
    \lim_{N \to \infty} \frac{1}{|F_N|} \sum_{n \in F_N} 1_S(n) = d^*(S) > 1/k. 
\] 
Define the function $f: S \to \{0, 1, \ldots, k-1\}$ by $f(n) = i$ if $n \in F_N$ for some $N$ satisfying $N \equiv i \bmod k$ and arbitrary if $n \notin\bigcup_{N \in \N} F_N$.
We claim that there does not exist a uniquely ergodic system $(X, T)$, a continuous function $F: X \to \C$, and a transitive point $x \in X$ such that $f(n) = F(T^n x)$ for all $n \in S$. As a result, $S$ is not a weak interpolation set of order $k$ for the class of uniquely ergodic systems.

To prove the claim we proceed by contradiction.
Suppose that there exist a uniquely ergodic system $(X, T)$, a continuous function $F: X \to \C$, and a transitive point $x \in X$ such that $f(n) = F(T^n x)$ for all $n \in S$.
For each $0 \leq i < k$, define $X_i = F^{-1}(\{i\})$; then the sets $X_i$ are compact, nonempty, and disjoint subsets of $X$. 
We may then define nonnegative continuous functions $g_i: X \rightarrow [0,1]$, for $0 \leq i < k$, such that $g_i = 1$ on $X_i$ and $g_0(y)+\cdots+g_{k-1}(y)\leq1$ for all $y\in X$.

With a fixed $i$, any weak$^*$ limit point of the sequence $(\mu_m)_{m\in\N}$, where
$$\mu_m:=\frac1{|F_{mk+i}|}\sum_{n\in F_{mk+i}}\delta_{T^nx},$$
is a $T$-invariant measure. 
By unique ergodicity, we conclude that $\mu_m\to\mu$ as $m\to\infty$.
Since every $n\in S\cap F_{mk+i}$ satisfies $T^nx\in X_i$ and hence $g_i(T^nx)=1$, we conclude that $\int_Xg_i\d\mu=\lim_{m\to\infty}\int_Xg_i\d\mu_m>1/k$ for all $i$.
But then linearity of integration would imply that $\int_Xg_0+\cdots+g_{k-1}\d\mu>1$, contradicting the fact that $g_0(y)+\cdots+g_{k-1}(y)\leq1$ for all $y\in X$.

\end{proof}

The next proposition shows that if $d^*(S) = 0$, then $S$ is an interpolation set for strictly ergodic systems of zero entropy.
The main difficulty in this proof is the requirement that the system is minimal. 
Without minimality, given $d^*(S) = 0$, it is simple to prove that $S$ is an interpolation set for uniquely ergodic systems of zero entropy. Indeed, for any bounded function $f: S \to \C $, extend $f$ to $\tilde{f}: \N_0 \to \C$ by defining $\tilde{f} = 0$ on $\N_0 \setminus S$. It is shown in the proof of \cref{prop:interpolation_finite_entropy} that the orbit closure of $\tilde{f}$ under the left shift is uniquely ergodic with the only invariant measure being the Dirac measure at the constant sequence $0$ and so has zero entropy.

\begin{Proposition}\label{prop:strictly_ergodic_interpolation}
Every set of zero Banach density is an interpolation set for strictly ergodic systems of zero entropy.
\end{Proposition}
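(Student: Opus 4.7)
The goal is to construct, for any bounded $f:S\to\C$ with $d^*(S)=0$, a subshift $(X,\sigma)\subset(K^{\N_0},\sigma)$ over a compact alphabet $K\subset\C$ containing $f(S)\cup\{0\}$, together with a transitive point $x\in X$ satisfying $x(n)=f(n)$ for every $n\in S$, such that $(X,\sigma)$ is strictly ergodic and has zero topological entropy. The continuous projection $F(y):=y(0)$ then yields $F(\sigma^nx)=f(n)$ for all $n\in S$, establishing the interpolation. The plan is a hierarchical ``template'' subshift construction in the spirit of Jewett--Krieger-type realizations and of \cref{lem:non-PW-syndetic-totally-minimal}, but with an additional layer of control on the number of templates at each scale, exploiting the much stronger hypothesis $d^*(S)=0$ in place of mere non-piecewise-syndeticity.

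Concretely, since $d^*(S)=0$, I would select a rapidly growing sequence of scales $m_1\mid m_2\mid\cdots$ and a sparsity sequence $\epsilon_k\to 0$ so that $\sup_{n\in\N}|S\cap[n,n+m_k)|\leq\epsilon_k m_k$ for every $k$. At each scale $k$, I would build a finite family $\mathcal{B}_k\subset K^{m_k}$ of template words such that (a) each word in $\mathcal{B}_{k+1}$ is a concatenation of $m_{k+1}/m_k$ words drawn from $\mathcal{B}_k$, with each element of $\mathcal{B}_k$ appearing the same number of times (for unique ergodicity) and at least once (for minimality); and (b) $|\mathcal{B}_k|$ grows sub-exponentially in $m_k$ (for zero entropy). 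The point $x$ is then produced block-by-block: for each level-$k$ interval $[jm_k,(j+1)m_k)$, one selects a word from $\mathcal{B}_k$ and overwrites the at most $\epsilon_k m_k$ positions of $S$ inside the block with the prescribed values $f(n)$. Ensuring these choices are nested across scales requires a compatibility argument, but the large freedom in $\mathcal{B}_k$ compared to the small number $\epsilon_k m_k$ of interpolation constraints per block makes this possible.

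The main obstacle is verifying that the orbit closure $X:=\overline{\{\sigma^n x:n\geq 0\}}$, not just the orbit of $x$ itself, inherits minimality, unique ergodicity, and zero entropy in the presence of the $S$-perturbations. Because $\epsilon_k\to 0$, the perturbations affect only an $\epsilon_k$-fraction of positions in any length-$m_k$ window, so every perturbed pattern is $\eta$-close to a template pattern; this bounds the number of new patterns introduced by the perturbations by $\binom{m_k}{\epsilon_km_k}|K_\eta|^{\epsilon_km_k}=e^{o(m_k)}$ over any finite $\eta$-net $K_\eta\subset K$, yielding $h(X)=0$ after letting $\eta\to 0$. Minimality and unique ergodicity then follow because the hierarchical structure of $\mathcal{B}_k$ forces uniform frequencies of every template in any sufficiently long window (up to error $\epsilon_k$), together with uniform recurrence of every template. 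The delicate point, and where the hypothesis $d^*(S)=0$ (rather than merely $d^*(S)<\delta$) is essential, is to propagate these statistical properties from $x$ itself to every limit point of its orbit: one must rule out that some subsequence of shifts can accumulate the sparse $S$-perturbations in a bounded window and thereby produce a genuinely new invariant measure or a non-recurrent pattern. The fast decay of $\epsilon_k$ relative to $m_{k+1}/m_k$ is the quantitative ingredient that closes this gap.
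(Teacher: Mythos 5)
Your high-level architecture (hierarchical blocks at scales $m_1 \mid m_2 \mid \cdots$, controlled number of templates per scale, exploiting zero Banach density to keep $S$-constraints sparse inside every block) is in the same spirit as the paper's proof, but the way you actually embed $f$ into the point creates a genuine gap that the paper's construction is specifically designed to avoid.

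The problem is with ``overwriting.'' You first build a rigid template family $\mathcal{B}_k$ (each template appearing equally often in level-$(k+1)$ words) and then \emph{overwrite} the $S$-positions of a template point with the values $f(n)$. The resulting point $x$ is then generically \emph{not} in the subshift generated by $\mathcal{B}_k$, and — more seriously — need not be uniformly recurrent, so the orbit closure $X = \overline{\{\sigma^n x\}}$ need not be minimal. Concretely: pick $n\in S$ and consider the length-$m_1$ window $W = x|_{[n, n+m_1)}$, which carries the specific value $f(n)$ at its first coordinate. For $W$ to recur syndetically one would need positions $b$ with $x|_{[b, b+m_1)} = W$, hence with the first coordinate $x(b)$ equal to $f(n)$. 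But $x(b)$ is either a template value (and $f(n)$ need not be any template value, since $f$ is arbitrary) or it is $f(b)$ for $b\in S$ (and the $b\in S$ with $f(b)=f(n)$ need not be syndetic — indeed $S$ itself has density zero). So $x$ fails uniform recurrence in general. Your appeal to ``fast decay of $\epsilon_k$'' controls the \emph{number} of positions perturbed, but not the \emph{recurrence} of the perturbed pattern; no rate of decay of $\epsilon_k$ fixes this. Relatedly, the exact-frequency requirement on $\mathcal{B}_k$ is incompatible with the overwriting you need: you cannot simultaneously insist that every template appears equally often and also replace an arbitrary subset of positions by arbitrary values of $K$.

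The paper sidesteps both issues at once by making the block families $X_k$ \emph{soft} rather than rigid, and by building the subshift \emph{before} incorporating $f$. At each level, $X_{k+1}$ consists of all concatenations of $X_k$-words in which a base word $w_k$ occupies a $(1-\tfrac{1}{k+1})$ fraction of slots and a $2^{-k}$-net $T_k$ of $X_k$ is fully represented; the remaining (at most $\tfrac{1}{k+1}$ fraction of) slots are free to hold arbitrary $X_k$-words. Because $X_0 = K$, this freedom propagates down to individual coordinates. The subshift $X$ is defined as the orbit closure of a fixed point $y$ (the nested limit of the $w_k$), so minimality, unique ergodicity, and zero entropy are established for $X$ itself, independently of $f$. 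Then, crucially, the interpolating point $x_u$ is built \emph{inside} $X$: on each interval of length $m_{k+1}$, the at most $m_{k+1}/((2k+2)m_k)$ blocks touching $S$ go into the free slots, the rest are filled with $w_k$ and a copy of $T_k$, and the density budget guarantees the result is a genuine $X_{k+1}$-word (no perturbation outside the family). That $x_u\in X$ follows by compactness, because $y$ contains $T_k$-approximants of every prefix $x_u|_{(0,m_k]}$. So minimality of $x_u$'s orbit is inherited automatically from minimality of $X$, which is exactly the step your argument cannot deliver. To repair your proposal you would need to replace the rigid $\mathcal{B}_k$ plus overwriting with a family $X_k$ that already \emph{contains} all the sparsely-constrained words, precisely as the paper does.
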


\begin{proof}
Let $S\subset\N$ be a set satisfying $d^*(S) = 0$.
Let $u: S \to \C$ be a bounded function and let $K\subset\C$ be a compact set containing $u(S)$ and $0$.
We construct $X\subset K^{\N_0}$ iteratively via auxiliary sequences $m_k \in \mathbb{N}$, compact sets $X_k \subset K^{m_k}$, and
$w_k \in X_k$. 
Define $m_0 = 1$, $X_0 = K$, and $w_0 = 0$. Now suppose that $m_k$, $X_k$, and $w_k$ are defined. Since $X_k$ is compact,
there exists a $2^{-k}$-spanning subset $T_k$ (in the $\ell^\infty$ metric). 
Since $d^*(S) = 0$, we can define
$m_{k+1}$ to be a multiple of $(2k+2)m_k$ larger than $(2k+2)m_k |T_k|$ with the property that for every interval $I\subset\N$ of length 
$m_{k+1}$, $\frac{|I \cap S|}{|I|} < ((2k+2)m_k)^{-1}$.

Define $X_{k+1}$ to be the set of all concatenations of sets of $\frac{m_{k+1}}{m_k}$ strings in $X_k$ with the following two properties: each $w \in T_k$ appears at least once, and $w_k$ appears at least $(1 - (k+1)^{-1}) \frac{m_{k+1}}{m_k}$ times. 
(This set is nonempty since $\frac{m_{k+1}}{m_k} \geq (k+1)|T_k|$.) Define $w_{k+1}$ to be any element in $X_{k+1}$ with $w_k$ as a prefix. Since $w_k$ is a prefix of $w_{k+1}$ for all $k$, we can define $y$ to be the limit of the $w_k$, and define $X$ to be the closure of the orbit of $y$. 

We first claim that $X$ has zero entropy. For this, consider any $k \in \N$ and $\epsilon > 0$, and we will bound $c(n_k, \epsilon, T)$ from above (see \cref{def:entropy}). Every subword $w$ of length $n_k$ of any $x \in X$ is a subword of a concatenation of two consecutive $A_k$-words, each of which is a concatenation of $A_{k-1}$-words in which all but a proportion of at most $1/k$ are $w_{k-1}$. Therefore, $w$ is determined by the location of the transition between the $A_{k-1}$-words, the locations of the elements not part of concatenated $w_{k-1}$, and those elements. Therefore, 
\[
c(n_k, \epsilon, T) \leq n_k (\lceil \epsilon^{-1} \rceil)^{2n_k/k} \sum_{i = 0}^{\lfloor 2n_k/k \rfloor} \binom{n_k}{i} 
\leq n_k^2 (\lceil \epsilon^{-1} \rceil)^{2n_k/k} \binom{n_k}{\lfloor 2n_k/k \rfloor}.
\]
By Stirling's approximation, $\log c(n_k, \epsilon, T)/n_k \rightarrow 0$, and since $\epsilon$ was arbitrary, $h(T) = 0$.

Now we prove that $y$ is uniformly recurrent. Indeed, for any neighborhood $U$ of $y$, there exists $k$ so that
$\{z \ : \ \rho(z_1 \ldots z_{m_k}, y_1 \ldots y_{m_k}) < 2^{-k}\} \subset U$. (Here $\rho$ is the metric on $X_k$.) 
By definition, $w_k = y_1 \ldots y_{m_k} \in X_k$, and so there exists $v \in T_k$ within distance $2^{-k}$ of $w_k$.
Finally, $y$ is a concatenation of elements in $X_{k+1}$, each of which contains $v$ by definition. So, the set of shifts of $y$ in $U$ is syndetic with gaps bounded by $2m_{k+1}$. 
Since $U$ was arbitrary, $y$ is uniformly recurrent, and so 
$X$ is minimal. 

Next, we claim that $X$ is uniquely ergodic. First, we note that every $x \in X$ is a shift of an infinite concatenation of $A_k$-words for each $k$, and that each $A_k$-word is a concatenation of $A_{k-1}$-words in which at most a proportion of $1/k$ are not $w_{k-1}$. Therefore, if $z \in \mathbb{C}$ does not occur in $w_{k-1}$, the frequency of occurrences of $z$ in $x$ is at most $1/k$. It follows that if $z$ does not occur in any $w_k$, then $z$ has density $0$ of occurrences in $x$. Since $x$ was arbitrary, this means by the ergodic theorem that $\mu([z]) = 0$ for all invariant measures $\mu$. (Here $[z]$ is the cylinder set $\{x \in X: x(0) = z\}$.)

Define $W$ to be the countable set of all complex numbers appearing in some $w_k$, and consider any cylinder of the form 
$R = \prod_{i=1}^m ([a_i, b_i] \times [c_i, d_i]) \times \prod_{m+1}^{\infty} K$, where all $a_i, b_i, c_i, d_i \notin W$.
Choose any ergodic measure $\mu$ and any $x \in X$ generic\footnote{In a system $(X, T)$, a point $x \in X$ is said to be \emph{generic} for an ergodic measure $\mu$ if $\frac{1}{N} \sum_{n=1}^N F(T^n x) \to \int_X F \ d \mu$ for every continuous function $F: X \to \C$.} for $\mu$. 
Note that since all endpoints are not in $W$, by earlier observation the boundary of $R$ has $\mu$-measure $0$, and so 
\[
    \mu(R) = \lim_{n \rightarrow \infty} \frac{1}{n} |\{0 \leq i < n \ : \ \sigma^i x \in R\}|.
\]

Consider now $k\in\N$ such that $m_k>m$. We note that $y$ is an infinite concatenation of elements in $X_{k+1}$, and since $X_{k+1}$ is closed, the same is true for $x$; therefore $x((im_{k+1}, (i+1)m_{k+1}]) \in X_{k+1}$ for all $k$. 
We denote 
\[
    N(w_k) = |\{0 < j \leq m_k - m \ : \ w_k(j) \ldots w_k(j+m-1) \in \prod_{i=1}^m ([a_i, b_i] \times [c_i, d_i])\}|.
\]
Then, since each word $x([im_{k+1}, (i+1)m_{k+1}))$ is a concatenation of elements of $X_k$ containing at least $(1 - (k+1)^{-1}) \frac{m_{k+1}}{m_k}$ occurrences of $w_k$, we see that
\begin{multline*}
iN(w_k) (1 - (k+1)^{-1}) \frac{m_{k+1}}{m_k} \leq 
|\{0 \leq j \leq im_{k+1} - m \ : \ \sigma^j x \in R\}|
\\
\leq 
iN(w_k) (1 - (k+1)^{-1}) \frac{m_{k+1}}{m_k} + i(k+1)^{-1} m_{k+1} + im_{k+1} \frac{m}{m_k}.
\end{multline*}
Here, the first term of the upper bound comes from visits to $R$ within one of the $w_k$ concatenated within some
$x((jm_{k+1}, (j+1)m_{k+1}]) \in X_{k+1}$, and the remaining terms are just an upper bound on the number of possible visits to $R$ 
at other locations.
By dividing by $L = im_{k+1}$ and taking limits as $i \rightarrow \infty$ (and recalling that $x$ is generic for $\mu$), we get
\[
(1 - (k+1)^{-1}) \frac{N(w_k)}{m_k} \leq \mu(R) \leq (1 - (k+1)^{-1}) \frac{N(w_k)}{m_k} + (k+1)^{-1} + \frac{m}{m_{k}}.
\]

This estimate holds for all $k$, and so we can let $k \rightarrow \infty$ to see that $\mu(R) = \lim_k \frac{N(w_k)}{m_k}$ (and particular this limit exists). 
Since this quantity depends only on the sequence $w_k$, $\mu(R)$ does not depend on $x$. Since the collection of cylinders $R$ generates the Borel $\sigma$-algebra, $\mu$ does not depend on $x$, and so $X$ is uniquely ergodic.

It remains to construct $x_u \in X$ for which $x(s) = u(s)$ for all $s \in S$. The construction of 
$x_u$ proceeds in steps, where it is continually assigned values in $K$ on more and more of $\mathbb{N}$, and undefined portions are labeled by $*$. Formally, define $x^{(0)} \in (K \cup \{*\})^{\mathbb{N}_0}$ by $x^{(0)}(s) = u(s)$ for $s \in S$ and $*$ for all other locations. 
Note (for induction purposes) that $x^{(0)}$ is an infinite concatenation of elements of $X_0$ and blocks of $*$ of length $m_0 = 1$, and that $x^{(0)}$ consists of 
$*$ on any ``interval'' $(im_0, (i+1)m_0]$ which is disjoint from $S$.


Now, suppose that $x^{(k)}$ has been defined as an infinite concatenation of elements of $X_k$ and blocks of $*$ of length $m_k$ which consists of only $*$ on any interval $(im_k, (i+1)m_k]$ which is disjoint from $S$. We wish to extend $x^{(k)}$ to $x^{(k+1)}$ by changing some 
$*$ symbols to values in $K$. 
Consider any
$i$ for which $S \cap (im_{k+1}, \ldots, (i+1)m_{k+1}] \neq \varnothing$. 
The portion of $x^{(k)}$ occupying that interval is a concatenation of elements of $X_k$ and blocks of $*$ of length $m_k$. 
By definition of $m_{k+1}$, $|S \cap (im_{k+1}, \ldots, (i+1)m_{k+1}]| < \frac{m_{k+1}}{(2k+2)m_k}$. 
This means that if 
$x^{(k)}((im_{k+1}, \ldots, (i+1)m_{k+1}])$ is written as a concatenation of elements of $X_k$ and blocks of $*$ of length $m_k$,
at least $(1 - (2k+2)^{-1}) \frac{m_{k+1}}{m_k}$ are blocks of $*$. 
Overwrite any $(1 - (k+1)^{-1}) \frac{m_{k+1}}{m_k}$ of these with $w_k$, leaving at least $\frac{m_{k+1}}{(2k+2)m_k} \geq |T_k|$. Fill all remaining ones with elements of $T_k$ in a way that each is used at least once. 

By definition, this creates a word in $X_{k+1}$, which we denote by $w^{(k+1)}_i$. Define $x^{(k+1)}((im_{k+1},$ $ (i+1)m_{k+1}]) = w^{(k+1)}_i$ for any $i$ as above (i.e. those for which $S \cap (im_{k+1}, \ldots, (i+1)m_{k+1}] \neq \varnothing$) and as $*$ elsewhere. Note that $x^{(k+1)}$ is an infinite concatenation of elements of $X_{k+1}$ and blocks of $*$ of length $m_{k+1}$ which contains $*$ on any interval $(im_{k+1}, (i+1)m_{k+1}]$ which is disjoint from $S$, and that $x^{(k+1)}$ agrees with $x^{(k)}$ on all locations where $x^{(k)}$ did not contain $*$.

Continue in this way to build a sequence $x^{(k)}$. Since each is obtained from the previous by changing some $*$s to values in $K$ (which are not changed in future steps), they approach a limit $x_u$. Since $S \neq \varnothing$, $S \cap (0, m_k] \neq \varnothing$ for all large enough $k$, and so 
$x^{(k)}((0, m_k])$ has no $*$, meaning that $x_u \in K^{\mathbb{N}_0}$. By definition, $x_u$ agrees with $x^{(0)}$ on all locations where $x^{(0)}$ did not have $*$, and so $x_u(s) = u(s)$ for all $s \in S$. It remains only to show that $x_u \in X$. To see this, we note that for all sufficiently large $k$, $x_u(0, m_k] \in X_k$ by definition. Also by definition, for all sufficiently large $k$, 
$y$ begins with $w_{k+1}$, which contains a subword in $T_k$ which is within distance $2^{-k}$ of $x_u(0, m_k]$. This yields a sequence of shifts of $y$ which converges to $x_u$, proving that $x_u \in X$ and $x_u(s) = u(s)$ for all $s \in S$. 
Finally, this implies that $S$ is interpolation for strictly ergodic systems by considering $F \in C(X)$ defined by $F(x) = x(0)$. 
\end{proof}

We are ready to prove \cref{prop:uniquely_ergodic_one} whose  statement is repeated here for convenience.
\begin{named}{\cref{prop:uniquely_ergodic_one}}{}
    Let $S \subset \N$. The following are equivalent:
\begin{enumerate}
    \item \label{item:weak-unique-ergodic} $S$ is a weak interpolation set of all orders for uniquely ergodic systems.
    
    \item \label{item:strong-unique-ergodic} $S$ is an interpolation set for uniquely ergodic systems.

    \item \label{item:strong-strict-ergodic} $S$ is an interpolation set for strictly ergodic systems.

    \item \label{item:strong-strict-ergodic-zero-entropy} $S$ is an interpolation set for strictly ergodic systems of zero entropy.

    \item \label{item:unique-ergodic-zero-banach-density} $d^*(S) = 0$.
\end{enumerate}
\end{named}
\begin{proof}
    It is obvious that $(\ref{item:strong-strict-ergodic-zero-entropy}) \Rightarrow (\ref{item:strong-strict-ergodic}) \Rightarrow (\ref{item:strong-unique-ergodic}) \Rightarrow (\ref{item:weak-unique-ergodic})$. The implication $(\ref{item:unique-ergodic-zero-banach-density}) \Rightarrow (\ref{item:strong-strict-ergodic-zero-entropy})$ follows from \cref{prop:strictly_ergodic_interpolation} and $(\ref{item:weak-unique-ergodic}) \Rightarrow (\ref{item:unique-ergodic-zero-banach-density})$ follows from \cref{thm:finite_alphabet_uniquely_ergodic}.
\end{proof}

Lastly, we prove \cref{prop:weak_interpolation_uniquely_ergodic}; we recall the statement here for convenience.

\begin{named}{\cref{prop:weak_interpolation_uniquely_ergodic}}{}
    If $S \subset \mathbb{N}$ is syndetic and the word complexity of the
    sequence $1_S$ grows subexponentially (i.e. the orbit closure of $1_S$ under the left shift has zero entropy), then $S$ is not a weak interpolation set of order $2$ for uniquely ergodic systems.
\end{named}

\begin{proof}

Suppose for a contradiction that a set $S$ exists which is a weak interpolation set of order $2$ for uniquely ergodic systems and which is syndetic and has subexponential word complexity.
Say that $S$ is syndetic with gaps less than $G$, and then for every interval $I$ of length at least $2G$, 
$|S \cap I| > (2G)^{-1} |I|$. 

For any $n > 2G$, denote by $p(n)$ the word complexity of $\chi_S$, i.e. the number of different sets which occur as
$(S \cap [k, k+n)) - k$ for $k \in \mathbb{N}$. 
By subadditivity of upper density, there exists such a set $J = \{j_1, \ldots, j_{|J|}\}$ which occurs with upper density at least $(p(n))^{-1}$, i.e. $A = \{k \ : \ (S \cap [k, k+n)) - k = J\}$ has upper density at least $(p(n))^{-1}$. 
Note that $|J| > (2G)^{-1} n$.
By partitioning $A$ by residue classes modulo $n$, we can pass to a subset $B$ of $A$ with gaps of at least $n$ and upper density at least $(np(n))^{-1}$. 
It is easy to partition $B$ into subsets $B_i$, $1 \leq i \leq 2^{|J|}$, each with upper density $(np(n))^{-1}$. 

Enumerate the words in $\{0,1\}^{|J|}$ as $w_1, \ldots, w_{2^{|J|}}$. Define $y: S \rightarrow \{0,1\}$ by assigning, for every $1 \leq i \leq 2^{|J|}$, every $m \in B_i$, and every $1 \leq k \leq |J|$, $y(m + j_k) = w_i(k)$, and defining $y$ arbitrarily otherwise. This is well-defined since we assumed $B$ to have gaps of at least $n \geq |J|$. By assumption, there exists a uniquely ergodic system $(X, T)$, transitive point $x$, and $f \in C(X)$ so that $f(T^x x) = y(s)$ for all $s \in S$. 

Denote the unique measure of $X$ by $\mu$. For any $1 \leq i \leq 2^{|J|}$, any $m \in B_i$, and any $1 \leq k \leq |J|$,
$f(\sigma^{m+j_k} x) = y(m + j_k) = w_i(k)$. Therefore, for each such $m$, 
$\sigma^m(x) \in \bigcap_{k = 1}^{|J|} \sigma^{-j_k} f^{-1}\{w_i(k)\}$; denote the latter set by $C_i$. 
By unique ergodicity and the fact that $B_i$ has upper density at least $(np(n))^{-1}$, 
$\mu(C_i) \geq (np(n))^{-1}$. However, the sets $C_i$ are disjoint, so
$2^{|J|} (np(n))^{-1} \leq 1$. Since $|J| > (4G)^{-1} n$, we contradict subexponential growth of $p(n)$ for large enough $n$. Our original hypothesis was false, and $S$ is not a weak interpolation set of order $2$ for uniquely ergodic systems. 
\end{proof}

\section{Interpolation sets for distal systems and connections to pointwise recurrence}
\label{sec:distal}

A topological system $(X, T)$ on a metric space $X$ with metric $\rho$ is called \emph{distal} if for all distinct $x, y \in X$ one has $\inf_{n \in \N} \rho(T^n x, T^n y) > 0$.  The class of distal systems is closed under taking products and inverse limits. It is a subclass of the class of zero entropy systems and it contains all rotations on compact abelian groups and, more generally, all nilsystems \cite{Furstenberg-structure-distal-flows, Host_Kra_18}. Distal systems have been studied in ergodic theory for a long time, partly because their measure theoretic counterparts are core components in the structure theory of measure preserving systems (see Furstenberg \cite{Furstenberg77} and Zimmer \cite{Zimmer76, Zimmer76a}).

Regarding interpolation sets for distal systems, it is shown in \cite[Corollary 5.1]{Pavlov-2008} that any set of positive integers $S=\{s_1<s_2<\ldots\}$ satisfying 
\begin{equation}
\label{eqn_rc_1}
\limsup_{n\in\N}\frac{\log(s_{n+1})}{\log(s_{n+1}-s_n)}<\infty
\end{equation}
is an interpolation set for distal systems. (In fact, in this case, $S$ is shown to be an interpolation set for a special subclass of distal systems called skew products on tori.)
The following natural question asked in \cite{Le_sublac} is still open.



\begin{Question}\label{ques:distal_union_finite_set}
Let $S \subset \N$ be an interpolation set for distal systems and $F \subset \N$ finite. Is it true that $S \cup F$ is an interpolation set for distal systems?
\end{Question}

Note that \cref{ques:distal_union_finite_set} has an affirmative answer if the class of distal systems is replaced with rotations on compact abelian groups (cf.~\cite{Le_interpolation_nil}), nilsystems (cf.~\cite{Le_sublac}), totally transitive systems (cf.~\cref{thm:total_transitive_weak_mixing_mixing}), weak mixing systems (cf.~\cref{thm:total_transitive_weak_mixing_mixing}), strong mixing systems (cf.~\cref{thm:total_transitive_weak_mixing_mixing}), minimal systems (cf.~\cref{thm:interpolation_minimal_systems_new}), or uniquely ergodic systems (cf.~\cref{prop:uniquely_ergodic_one}). This prompts us to believe that the answer for distal systems is also positive. 


There exists a noteworthy connection between \cref{ques:distal_union_finite_set} and the notion of pointwise recurrence in distal systems. A set $R\subset\N$ is a \emph{set of pointwise recurrence for distal systems} if for every distal system $(X,T)$ and every point $x\in X$, we have $\inf_{n\in R} \rho(x, T^nx)=0$.



\begin{Proposition}
\label{lem_connection_rec_interpol_distal}
    If every set of pointwise recurrence for distal systems can be partitioned into two disjoint sets of pointwise recurrence for distal systems, then the answer to \cref{ques:distal_union_finite_set} is positive.
\end{Proposition}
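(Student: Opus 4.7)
I propose an argument by contrapositive: assuming the partition hypothesis, if $S \cup F$ fails to be an interpolation set for distal systems then $S$ itself is not an interpolation set. A short induction on $|F|$ reduces the problem to $F = \{k\}$ with $k \notin S$.

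Let $f : S \cup \{k\} \to \C$ be bounded with no distal realization, and write $f_0 := f|_S$. We may assume $f_0$ \emph{is} realized by some distal $(X, T)$ with transitive point $x \in X$ and $\phi \in C(X)$, else $S$ is already not interpolation and we are done. The first observation is that in \emph{every} distal realization of $f_0$ we must have $T^k x \in \overline{\{T^n x : n \in S\}}$: otherwise Urysohn's lemma supplies a continuous $\psi : X \to \C$ vanishing on $\{T^n x : n \in S\}$ with $\psi(T^k x) \neq 0$, and then $\phi + \lambda \psi$ (for suitable $\lambda \in \C$) is a distal realization of $f$ on $S \cup \{k\}$, contradicting non-realizability.

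The key step is to upgrade this to the statement that $R := \{n \in \N : n + k \in S\}$ is a set of pointwise recurrence for distal systems. Suppose by contradiction that some distal $(Y, U)$, $y \in Y$, and $\epsilon > 0$ satisfy $\rho(U^n y, y) \geq \epsilon$ for all $n \in R$. Replacing $(Y, U)$ by the orbit closure of $y$, we may assume it is minimal distal, so $U$ is a homeomorphism (a classical consequence of the Ellis group structure of minimal distal systems). Form the product $(X \times Y, T \times U)$, take the orbit closure $Z$ of the point $(x, U^{-k} y)$, and observe that $(Z, T \times U)$ is distal with $(x, U^{-k} y)$ a transitive point, and that $\phi \circ \pi_X$ realizes $f_0$ on $S$. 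At time $k$ the orbit of $(x, U^{-k} y)$ is at $(T^k x, y)$, while at time $s \in S$ it is at $(T^s x, U^{s-k} y)$; the assumption $\rho(U^{s-k} y, y) \geq \epsilon$ forces $(T^k x, y)$ to be at distance $\geq \epsilon$ from $\{(T^s x, U^{s-k} y) : s \in S\}$ in $Z$, contradicting the first observation applied to this new realization of $f_0$. Hence $R$ is a set of pointwise recurrence for distal systems.

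By the partition hypothesis, $R = R_1 \sqcup R_2$ with both $R_i$ sets of pointwise recurrence for distal systems. Define $g : S \to \{0, 1\}$ by $g = 1$ on $R_1 + k$ and $g = 0$ on $R_2 + k$ (disjoint subsets of $S$), extending by $0$ elsewhere. If $g$ admitted a distal realization $(Y, U, y, \chi)$, then pointwise recurrence of $R_1$ applied at the point $U^k y$ yields $n_j \in R_1$ with $U^{n_j + k} y \to U^k y$, so $\chi(U^k y) = \lim_j \chi(U^{n_j + k} y) = \lim_j g(n_j + k) = 1$. Symmetrically, $R_2$ forces $\chi(U^k y) = 0$, a contradiction. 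So $g$ has no distal realization, meaning $S$ is not an interpolation set, which completes the contrapositive. The delicate part of this plan is the middle step: one has to pass to a minimal distal subsystem in order to invert $U^k$ and then arrange the product-system seed precisely so that the desired separation at time $k$ is produced without disturbing the realization of $f_0$ on $S$.
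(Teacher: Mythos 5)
Your argument is correct in essence and is the contrapositive of the paper's proof, built from the same two ingredients. Your Step~4 (applying the partition hypothesis to the set of pointwise recurrence $R=S-k$ to manufacture a non-realizable $\{0,1\}$-valued function on $S$) is exactly the paper's first half. Your Steps~2--3 (the Urysohn observation together with the product system seeded at $(x,U^{-k}y)$) are the contrapositive form of the paper's second half, which directly constructs a realization on $S\cup\{a\}$ from a realization of $g|_S$ together with a distal witness $(Y,S,y_0)$ of the non-recurrence of $S-a$; the underlying moves --- take a product, use continuity/Urysohn to adjust the value at time $a$ --- are the same, just run in the opposite direction.

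One small point should be tightened. You conclude that $(T^kx,y)$ is at $Y$-distance at least $\epsilon$ from every $(T^sx,U^{s-k}y)$ with $s\in S$, but the hypothesis $\rho(U^ny,y)\geq\epsilon$ is only given for $n\in R\subset\N$, i.e.\ for $s\in S$ with $s>k$. For the finitely many $s\in S$ with $s<k$, nothing a priori prevents $U^{s-k}y$ from being equal to $y$ (this can happen exactly when $y$ is a periodic point). The fix is cheap: replace $(Y,U,y)$ by its product with an irrational circle rotation at the point $(y,0)$ --- this is still a witness that $R$ is not a set of pointwise recurrence, and $(y,0)$ is aperiodic, so the finitely many offending points are separated from $(y,0)$ by some positive amount, and you can work with the minimum of that and $\epsilon$. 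It is worth noting that the paper's own proof leaves the same point implicit in the step from $y_0\notin\overline{\{S^ny_0:n\in S-a\}}$ to $S^ay_0\notin\overline{\{S^ny_0:n\in S\}}$, so you are in good company, but the gap is real and should be filled.
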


\begin{proof}
Let $S$ be an interpolation set for distal systems and let $a \in \N \setminus S$. First, we show that $S - a$ is not a set of pointwise recurrence for distal systems. Indeed, for contradiction, suppose the opposite. 
Then by our hypothesis, $S - a$ can be partitioned into two sets $A$ and $B$ both of which are sets of pointwise recurrence for distal systems. Therefore, for any distal system $(X, T)$, any point $x \in X$, 
\[
    x \in \overline{\{T^n x: n \in A\}} \cap \overline{\{T^n x: n \in B\}}.
\]
(Here we use the fact that distal systems are semisimple, i.e. the orbit closure of every point is minimal.) It follows that
\begin{equation}\label{eq:A+a_B+a}
    T^a x \in \overline{\{T^n x: n \in A + a\}} \cap \overline{\{T^n x: n \in B + a\}}.
\end{equation}
Note that $A + a$ and $B + a$ are disjoint subsets of $S$ and so we can define a function $f: S \to \{0, 1\}$ with $f|_{A+a} = 0$ and $f|_{B+a} = 1$. Because the intersection in \eqref{eq:A+a_B+a} is nonempty for any distal system $(X, T)$ and any point $x \in X$, the function $f$ cannot be extended to a sequence coming from a distal system. This contradicts our assumption that $S$ is an interpolation set for distal systems.


To show $S \cup \{a\}$ is an interpolation set for distal systems, let $g: S \cup \{a\} \to [0, 1]$ be an arbitrary function. 
Since $S$ is an interpolation set for distal systems, there exists a distal system $(X, T)$, a point $x_0 \in X$ and a function $F \in C(X)$ such that $F(T^n x_0) = g(n)$ for all $n \in S$. Because $S - a$ is not a set of pointwise recurrence for distal systems, there exists a distal system $(Y, S)$ and a point $y_0 \in Y$ such that $y_0 \not \in \overline{\{S^n y_0: n \in S - a\}}$, and so $S^a y_0 \not \in \overline{\{S^n y_0: n \in S\}}$. Considering the product system $(X \times Y, T \times S)$ and the point $(x_0, y_0)$, we have
\[
    (T \times S)^a (x_0, y_0) \not \in  \overline{\{(T \times S)^n (x_0, y_0): n \in S\}}.
\]
Therefore, there exists a continuous function $G: X \times Y \to [0, 1]$ such that 
\[
    G(x, y) = \begin{cases}
        F(x), \text{ if } (x, y) \in \overline{\{(T \times S)^n (x_0, y_0): n \in S\}}, \\
        g(a), \text{ if } (x, y) = (T \times S)^a (x_0, y_0).
    \end{cases}
\]
It follows that 
\[
    G((T \times S)^n (x_0, y_0)) = \begin{cases}
        F(T^n x_0) = g(n), \text{ if }  n \in S, \\
        g(a), \text{ if } n = a.
    \end{cases}
\]
Since $g: S \cup \{a\} \to [0, 1]$ is arbitrary, we conclude that $S \cup \{a\}$ is an interpolation set for distal systems. 
\end{proof}


In light of \cref{lem_connection_rec_interpol_distal}, the following open question arises naturally and warrants further investigation.

\begin{Question}\label{ques:distal_point_recurrence}
    Is it true that every set of pointwise recurrence for distal systems can be partitioned into two disjoint sets of pointwise recurrence for distal systems?
\end{Question}

We remark that the analogues of \cref{ques:distal_point_recurrence} for compact abelian group rotations and nilsystems were answered in \cite{Le_interpolation_nil, Le_sublac, Ryll-Nardzewski_1964}.

A set $E\subset\N$ is an \emph{IP-set} if it contains an infinite sequence and all its finite sums, that is, there are $n_1<n_2<\ldots\in\N$ with $\{n_{i_1}+\ldots+n_{i_k}: k\in\N,~i_1<\ldots<i_k\}\subset E$.  
It is well known that IP-sets are sets of pointwise recurrence in distal systems 
\cite[Theorem 9.11]{Furstenberg81} and it is not difficult to see that every IP-set contains two disjoint subsets which are both IP-sets.
Therefore, an affirmative answer to the following question, asked by Host, Kra, and Maass in~\cite{Host-Kra-Maass-2016}, would imply an affirmative answer to \cref{ques:distal_point_recurrence}.  


\begin{Question}[Host-Kra-Maass {\cite[Question 3.11]{Host-Kra-Maass-2016}}]\label{ques:host_kra_maass}
    Is it true that every set of pointwise recurrence for distal systems is an $IP$-set?
\end{Question}

While we don't know how to answer \cref{ques:distal_point_recurrence}, our next result shows that the stronger \cref{ques:host_kra_maass} has a negative answer.


\begin{Theorem}
\label{thm:pointwise_distal_no_IP}
    There exists a set $F \subset \N$ that is a set of pointwise recurrence for distal systems but such that for any $x,y\in F$ we have $x+y\notin F$. 
    In particular, $F$ is not an $IP$-set.
\end{Theorem}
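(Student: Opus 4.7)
The plan is to exhibit the explicit set $F = \{n^3 : n \geq 1\}$, the set of positive perfect cubes, and verify that it has both desired properties. The two things to check are sum-freeness ($x + y \notin F$ for all $x, y \in F$) and pointwise recurrence for distal systems.

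Sum-freeness will follow directly from Fermat's Last Theorem for exponent $3$ (originally proved by Euler): if $x = a^3$ and $y = b^3$ lie in $F$, then $x + y = a^3 + b^3 = c^3$ would give a positive integer solution to Fermat's equation, which is impossible. The case $x = y$ is handled separately: $2a^3 = c^3$ would force $c/a = \sqrt[3]{2} \in \Q$, contradicting irrationality of $\sqrt[3]{2}$. Since any $IP$-set contains at least two generators together with their sum, sum-freeness immediately implies that $F$ is not an $IP$-set, which is the ``in particular'' part of the statement.

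For the recurrence part, I would appeal to the polynomial recurrence theorem for distal systems: for every distal system $(X, T)$, every $x \in X$, and every $p \in \Z[n]$ with $p(0) = 0$, one has $x \in \overline{\{T^{p(n)} x : n \in \N\}}$. Applied to $p(n) = n^3$, this is exactly the assertion that $F$ is a set of pointwise recurrence for distal systems. This theorem can be obtained from the Bergelson--Furstenberg--McCutcheon polynomial $IP$-recurrence theorem (taking the $IP$-set $\N = FS(\{2^i\})$, whose image under $p$ is precisely $\{n^3 : n \in \N\}$), or by working through Furstenberg's structure theorem: on compact abelian group rotations at the base, Weyl equidistribution handles $\{n^3\alpha \bmod 1\}$ (dense when $\alpha$ is irrational; returns to $0$ when $\alpha$ is rational); polynomial recurrence is then pushed through each isometric extension in the Furstenberg tower, and descent to the non-minimal case uses semisimplicity of distal systems (every orbit closure is minimal).

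The main obstacle is invoking polynomial recurrence for distal systems in exactly the right generality. If this reference is inconvenient, an alternative route is a direct inductive construction: enumerate a countable dense family of triples (distal system, point, $\varepsilon$) using the separability afforded by the Furstenberg structure theorem, and recursively choose $n_k$ very large so as to lie in the $\varepsilon_k$-recurrence set of the $k$-th triple while automatically avoiding all sums $n_i + n_j$ with $i,j < k$ (which are bounded by $2\max_{i<k} n_i < n_k$). Sum-freeness then follows from the size growth, recurrence for the listed triples is by construction, and extension to all distal systems is by approximation along the dense family; the main technical difficulty here would be verifying that such a countable dense family suffices, but this reduction seems preferable if one wants to avoid black-box polynomial recurrence.
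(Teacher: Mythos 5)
Your sum-freeness argument is clean and correct: Fermat for exponent $3$ rules out $a^3+b^3=c^3$ with distinct $a,b$, and irrationality of $2^{1/3}$ rules out $a=b$; a sum-free set clearly cannot be an $IP$-set. The overall strategy (exhibit a sum-free pointwise recurrence set) matches the paper. The difficulty is the recurrence half.

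The paper's route is elementary: it constructs $F$ so that every translate $F-n$ contains an $IP$-set, and then applies Furstenberg's theorem \cite[Theorem 9.11]{Furstenberg81} that $IP$-sets are sets of pointwise recurrence for distal systems, together with semisimplicity of distal systems. Your route instead requires the assertion that $\{n^3:n\in\N\}$ is a set of pointwise recurrence for distal systems, and this is where there is a real gap. Note that your set cannot be fed into the paper's lemma: $\{n^3\}$ is itself sum-free, so it certainly contains no $IP$-set, and the same is expected of its translates, so you genuinely need the separate polynomial statement. Neither of your two justifications closes it as stated. The Bergelson--Furstenberg--McCutcheon polynomial $IP$-recurrence theorem is a measure-theoretic statement about positive-measure sets in a measure-preserving system; converting it into pointwise topological recurrence at an arbitrary point of an arbitrary (possibly non-uniquely-ergodic) minimal distal system requires an additional argument that you do not supply. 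The ``push it through the Furstenberg tower'' sketch is hand-waving precisely at the step that matters: at the Kronecker base, Weyl equidistribution does give recurrence along $\{n^3\}$, but lifting polynomial recurrence through a general isometric extension is not a formality, because the iterate $T^{n^3}$ produces a cocycle sum of length $n^3$ over the base, not a polynomial in the fiber coordinate, and this is exactly the content of the (transfinite) induction that needs to be carried out. It may well be that $\{p(n):n\in\N\}$ with $p(0)=0$ is a set of pointwise recurrence for distal systems --- this is plausible and is related to results of Bergelson, H{\aa}land Knutson, and McCutcheon on $VIP$-systems --- but it is a substantial theorem in its own right, arguably heavier machinery than the entire proof it would replace, and you would need to locate and verify a precise citation rather than gesture at it. Your fallback ``direct inductive construction'' sketch has the same issue: it defers the recurrence verification to an unstated dense-family argument. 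If you can pin down a correct reference for pointwise polynomial recurrence in distal systems, the cube example would be a genuinely more elegant witness than the paper's $F=\bigcup_n(J_n+n)$; as written, though, the recurrence claim is not justified.
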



\begin{proof}
First we show that if $F \subset \N$ and $F - n$ contains an $IP$-set for all $n \in \N$, then $F$ is a set of pointwise recurrence for distal systems. 
Indeed, let $(X, T)$ be a distal system, $U\subset X$ be open and $x \in U$. 
Since the orbit closure of every point in a distal system is minimal, there is $n \in \N$ such that $T^n x \in U$. 
Since $F - n$ contains an $IP$-set, by Furstenberg's theorem \cite[Theorem 9.11]{Furstenberg81}, there exists $m \in F - n$ such that $T^m (T^n x) \in U$. 
It follows that $T^{m+n} x \in U$ and note that $m + n \in F$.   

Now we will construct the set $F$. 
Let $I_1, I_2, I_3, \ldots$ be pairwise disjoint infinite subsets of $\N$ such that $\min \{k: k \in I_n\} \geq n$ for all $n$. 
Let $J_n$ be the $IP$-set generated by $\{10^k: k \in I_n\}$. 
Then the sets $J_1,J_2,J_3,\ldots$ are pairwise disjoint and every element of $J_n$ is divisible by $10^n$. 

Let $F = \bigcup_{n \in \N} (J_n + n)$. 
Then for every $n \in \N$, the shift $F - n$ contains the $IP$-set $J_n$ and hence $F$ is a set of pointwise recurrence for distal systems. It remains to show that $F$ does not contain a triple of the form $x, y, x + y$. By contradiction, assume there are $n, m, k$ (not necessarily distinct) such that 
\begin{equation}
\label{eqn_decimal_sum_0}
    \big((J_n + n) + (J_m + m)\big) \cap (J_k + k)  \neq \varnothing.
\end{equation}
In other words, there are $1 \leq n_1 < n_2 < \ldots < n_s$, $1 \leq m_1 < m_2 < \ldots < m_t$ and $1 \leq k_1 < k_2 < \ldots < k_u$ such that
\begin{equation}
\label{eqn_decimal_sum_1}
    (10^{n_1} + \ldots + 10^{n_s} + n) + (10^{m_1} + \ldots + 10^{m_t} + m) = 10^{k_1} + \ldots + 10^{k_u} + k.
\end{equation}
By removing the terms that appear in both sides, we can assume that $\{k_1,\dots,k_u\}$ is disjoint from $\{n_1,\dots,n_s,m_1,\dots,m_t\}$.
In particular, $k_u$ must be strictly larger than both $n_s$ and $m_t$, for if, say, $n_s$ was the larger, then 
because $k \leq k_1 < 10^{k_1}$, we would have
\begin{equation*}
    10^{n_s} > 10^{k_1} + \ldots + 2\cdot10^{k_u}> 10^{k_1} + \ldots + 10^{k_u} + k,
\end{equation*}
which contradicts \eqref{eqn_decimal_sum_1}. 
That leaves the only possibility that $k_u$ is the largest. 
But this is also impossible since
\begin{equation*}
    10^{k_u} > 2(1 + 10^1 + \ldots + 10^{k_u - 1}) > (10^{n_1} + \ldots + 10^{n_s} + n) + (10^{m_1} + \ldots + 10^{m_t} + m).
\end{equation*}
Thus, the intersection in \eqref{eqn_decimal_sum_0} is empty and hence $F$ does not contain a triple of the form $x,y,x+y$. 
\end{proof}





\section{Open questions}\label{sec_Qs}



In this section, we list some natural questions that arise from our study and may be of interest to readers. 











The first question is concerned with interpolation sets for compact abelian group rotations. A \emph{compact abelian group rotation} (or \emph{group rotation} for short) has the form $(G, g)$ where $G$ is a compact abelian group and $g \in G: x \mapsto g \cdot x$ for $x \in G$. It was shown in \cite{Strzelecki_1963} that lacunary sets are interpolation sets for group rotations. On the other hand, no subexponential sets are interpolation sets for this class of systems \cite{Le_interpolation_nil}. 
For the sets $S$ that are neither lacunary nor subexponential, whether $S$ is an interpolation set for group rotations hinges on some delicate arithmetic properties instead of mere density. For example, $\{2^n: n \in \N\} \cup \{2^n + 2n: n \in \N\}$ is not interpolation while $\{2^n: n \in \N\} \cup \{2^n + 2n + 1: n \in \N\}$ is. Based on these facts, it is desirable to have a complete combinatorial characterization of interpolation sets for group rotations:


\begin{Problem}\label{ques:group_rotations}
    Give a combinatorial characterization of interpolation sets for compact abelian group rotations. 
\end{Problem}

Group rotations is a subclass of inverse limits of nilsystems, and so our next two questions consider interpolation sets for the latter, more general family of systems. 
For $k\in \N$, \emph{$k$-step nilsystem} is a topological dynamical system $(X,T)$ where $X=G/\Gamma$ for $G$ a $k$-step nilpotent Lie group and $\Gamma$ a discrete co-compact subgroup of $G$, and $T\colon X\to X$ is left translations by a fixed group element $g \in G$.
Because a group rotation is an inverse limit of $1$-step nilsytems, an interpolation set for group rotations is interpolation for inverse limits of $k$-step nilsystems for any $k \geq 1$. 
However, it was shown in \cite{Le_interpolation_nil} that there is an interpolation set for inverse limits of $2$-step nilsystems which is not interpolation for group rotations. It is unknown if the same feature continues to hold for higher step nilsystems:

\begin{Question}\label{ques:separate_steps_nil}
    For $k \geq 2$, does there exist an interpolation set for inverse limits of $(k+1)$-step nilsystems that is not an interpolation set for inverse limits of $k$-step nilsystems?
\end{Question}
We remark that \cref{ques:separate_steps_nil} was also asked in \cite{Le_interpolation_nil, Le_sublac}.

Similarly to the situation with group rotations, we have a relatively good understanding of interpolation sets for inverse limits of nilsystems in terms of density \cite{Briet_Green_2021, Le_sublac, Strzelecki_1963}: if $S$ is lacunary, $S$ is an interpolation for inverse limits of nilsystems; if $S$ is subexponential, $S$ is not interpolation for this class. However, this characterization does not cover all subsets of $\N$ and so a complete characterization of interpolation sets of inverse limits of nilsystems is still missing:



\begin{Problem}
For $k \geq 2$, give a characterization for interpolation sets of inverse limits of $k$-step nilsystems.
\end{Problem}


Next, we shift our focus to interpolation sets for distal systems. 
The orbit closure of every point in a distal system is minimal and therefore, by \cref{thm:interpolation_minimal_systems_new}, an interpolation set for distal systems must be non-piecewise syndetic. 

Moreover it follows from \eqref{eqn_rc_1} that any
sequence $s_n$ for which the difference between consecutive elements $s_{n+1}-s_n$ grows sufficiently fast (e.g. \{$\lfloor n^{1+\epsilon}\rfloor:n\in\N\}$ for any $\epsilon>0$), yields an interpolation set for distal systems. 
This observation naturally leads to the following question, aiming to weaken the condition \eqref{eqn_rc_1}:


\begin{Question}\label{ques:interplation_for_distal}
    Is it true that any set $S = \{n_1 < n_2 < \ldots\}$ for which the gaps $n_{k+1} - n_k$ tend to infinity is an interpolation set for distal systems?
\end{Question}

An important aspect, both in the condition \eqref{eqn_rc_1} and in \cref{ques:interplation_for_distal}, is the emphasis on the growth of gaps $n_{k+1} - n_k$ rather than the growth of the sequence $n_k$ itself. Controlling the growth rate of $n_k$ is not sufficient for interpolation in distal systems. Indeed, one can construct examples of sets of arbitrarily fast growth rate that fail to be interpolation sets for distal systems by considering sparse $IP$-sets. Yet $IP$-sets do not  provide a counterexample to \cref{ques:interplation_for_distal} since they possess gaps that appear infinitely often.

We also have the following, more concrete, question about distal systems involving prime numbers.



\begin{Question}\label{ques:prime_distal}
    Is it true that the set of primes $\P$ is an interpolation set for distal systems?
\end{Question}
According to the celebrated Zhang's theorem \cite{Zhang_boundedgaps}, the sequence of gaps between consecutive primes does not go to infinity. Therefore, \cref{ques:prime_distal} is not a special case of \cref{ques:interplation_for_distal}. In order for $\P$ to be an interpolation set for distal systems, for any partition $\P = A \cup B$, there must be a distal system $(X, T)$ and a point $x \in X$ such that the closures $\overline{\{T^p x: p \in A\}}$ and $\overline{\{T^p x: p \in B\}}$ are disjoint. Related to this,
recently, Kanigowski, \Lemanczyk{}, and Radziwi{\l\l} \cite{Lemanczyk-Kanigowski-Radziwill-primenumbertheoremskewproduct} proved that the prime number theorem holds for skew products on $\T^2$ with $T(x, y) = (x + \alpha, y + h(x))$ where $h$ is an analytic function. More precisely, they showed that for these systems $(X, T)$ and for any $x \in X$, the set $\{T^p x: p \in \P\}$ is uniformly distributed on $\T^2$. That means for any partition $\P = A \cup B$, 
\[
    \overline{\{T^p x: p \in A\}} \cup \overline{\{T^p: p \in B\}} = \T^2
\]
and so these two closures cannot be disjoint. As a result, $\P$ is not interpolation for the skew products on $\T^2$ with analytic skewing functions. That being said, since the class of distal systems is much larger than the aforementioned skew products, the answer to \cref{ques:prime_distal} may be positive.




The next question is concerned with the threshold for density of weak interpolation sets for uniquely ergodic systems. It follows from the proof of \cite[Theorem 8.1]{Weiss-SingleOrbitDynamics} that for any $\epsilon > 0$, there exists a set $S$ which is a weak interpolation set of order $2$ for uniquely ergodic systems such that $d^*(S) > 1/2 - \epsilon$. The proof can be generalized to arbitrary $k \in \N$ to show that there exists a set $S$ which is a weak interpolation set of order $k$ for uniquely ergodic systems such that $d^*(S) > 1/k - \epsilon$.
However, from the proof of \cref{prop:uniquely_ergodic_one}, there is no such a set that satisfies $d^*(S) > 1/k$. 
These facts lead to the following question. 

\begin{Question}
    For $k \in \N$, is there a set $S$ which is a weak interpolation set of order $k$ for uniquely ergodic systems such that $d^*(S) = 1/k$?
\end{Question}
Among the classes of systems considered in this paper, for minimal systems, totally minimal systems, systems of zero entropy, and systems of finite entropy, weak interpolation of order $2$ is equivalent to weak interpolation of all orders. On the other hand, for uniquely ergodic systems and strictly ergodic systems, these two notions are not equivalent. 
Our next question explores this equivalence for the class of totally transitive systems. More precisely, \cref{thm:total_transitive_weak_mixing_mixing} implies that a weak interpolation set of all orders for totally transitive systems cannot be syndetic. Nevertheless, we do not know whether a weak interpolation set of order $2$ for this class can be syndetic or not.
\begin{Question}\label{Q2synd}
Does there exist a syndetic set which is weak interpolation of order $2$ for the class of totally transitive systems?
\end{Question}

Our last question is concerned with the necessity of the hypothesis of \cref{prop:weak_interpolation_uniquely_ergodic}. Recall that \cref{prop:weak_interpolation_uniquely_ergodic} shows that a syndetic set $S$ satisfying the orbit closure of $1_S$ under the left shift has zero entropy cannot be weak interpolation of order $2$ for uniquely ergodic systems. We do not know whether the entropy condition is necessary and so the following question is open:

\begin{Question}\label{Q2synd-2}
Does there exist a syndetic set which is weak interpolation of order $2$ for the class of uniquely ergodic systems?
\end{Question}





\bibliographystyle{plain}
\bibliography{refs-interpolation}


\bigskip
\footnotesize
\noindent
Andreas Koutsogiannis \\
\textsc{Aristotle University of Thessaloniki}\par\nopagebreak
\noindent
\href{mailto:akoutsogiannis@math.auth.gr}
{\texttt{akoutsogiannis@math.auth.gr}}

\bigskip
\footnotesize
\noindent
Anh N.\ Le\\
\textsc{University of Denver} \par\nopagebreak
\noindent
\href{mailto:anh.n.le@du.edu}
{\texttt{anh.n.le@du.edu}}

\bigskip
\footnotesize
\noindent
Joel Moreira\\
\textsc{University of Warwick} \par\nopagebreak
\noindent
\href{mailto:joel.moreira@warwick.ac.uk}
{\texttt{joel.moreira@warwick.ac.uk}}

\bigskip
\footnotesize
\noindent
Ronnie Pavlov\\
\textsc{University of Denver} \par\nopagebreak
\noindent
\href{mailto:rpavlov@du.edu}
{\texttt{rpavlov@du.edu}}

\bigskip
\footnotesize
\noindent
Florian K.\ Richter\\
\textsc{{\'E}cole Polytechnique F{\'e}d{\'e}rale de Lausanne} (EPFL)\par\nopagebreak
\noindent
\href{mailto:f.richter@epfl.ch}
{\texttt{f.richter@epfl.ch}}

\end{document}